\newtheorem{theorem}{Theorem}[section]
\newtheorem{cor}[theorem]{Corollary}
\newtheorem{lemma}[theorem]{Lemma}
\newtheorem{prop}[theorem]{Proposition}
\newtheorem{proptes}[theorem]{Properties}
\theoremstyle{definition}
\newtheorem{definition}[theorem]{Definition}
\newtheorem{rmk}[theorem]{Remark}
\def\leq{\leqslant}
\def\geq{\geqslant}
\newcommand{\abs}[1]{\left\lvert #1 \right\rvert}
\newcommand{\defeq}{\mathrel{\mathop:}=}
\newcommand{\ind}{\mathbbm{1}}
\newcommand{\beq} {\begin{eqnarray*}}
\newcommand{\eeq} {\end{eqnarray*}}
\newcommand{\trm} {\textrm}
\newcommand{\tbf} {\textbf}
\newcommand{\noi} {\noindent}
\def \R{\mathbb{R}}
\def \N{\mathbb{N}}
\def \Z{\mathbb{Z}}
\def \E{\mathbb{E}}
\def \Var{\hbox{{\rm Var}}}
\def \Cov{\hbox{{\rm Cov}}}
\title{Semi-parametric estimation of the variogram scale parameter   of a Gaussian process  with stationary increments} 
\author{Jean-Marc Aza\"is\footnote{Institut de Math\'ematiques de Toulouse; UMR5219. Université de Toulouse; CNRS. UT3, F-31062 Toulouse, France.}
 \and Fran\c cois Bachoc$^*$
 \and Agn\`es~Lagnoux\footnote{Institut de Math\'ematiques de Toulouse; UMR5219. Université de Toulouse; CNRS. UT2J, F-31058 Toulouse, France. }
 \and Thi Mong Ngoc Nguyen\footnote{VNUHCM - University of Science, Ho Chi Minh city, Viet Nam.}
}
\begin{document}

\maketitle
\begin{abstract}
We consider the semi-parametric estimation of the scale parameter of the variogram of a one-dimensional Gaussian
process with known smoothness. We suggest an estimator based on quadratic variations and on the
moment method. We provide asymptotic approximations of the mean and variance of this estimator,
together with asymptotic normality results, for a large class of Gaussian processes. We allow for
general mean functions and study the aggregation of several estimators based on various variation
sequences. In extensive simulation studies, we show that the asymptotic results accurately depict the
finite-sample situations already for small to moderate sample sizes. We also compare various variation
sequences and highlight the efficiency of the aggregation procedure.
\end{abstract}
{\bf Keywords}: quadratic variations, scale covariance parameter, asymptotic normality, moment
method, aggregation of estimators.

\section{Introduction}\label{sec:intro}

\paragraph{General context and state of the art} Gaussian process models are widely used in statistics. For instance, they enable to interpolate observations by Kriging, notaby in computer experiment designs to build a metamodel \cite{rasmussen06gaussian,stein99interpolation}. A second type of application of Gaussian processes is the analysis of local characteristics of images \cite{richard2018anisotropy} and one dimensional signals (e.g.\ in finance, see \cite{wu2014,han2015financial} and the references therein). A central problem with Gaussian processes is the estimation of the covariance function or the variogram.
In this paper, we consider a real-valued Gaussian process $(X(t))_{t\in \R}$ with stationary increments. Its semi-variogram is well-defined and given by 
\begin{align}\label{def:variogram}
V(h):=\frac12\E\left[\left(X(t+h)-X(t)\right)^2\right].
\end{align}
Ideally, one aims at knowing perfectly the function $V$ or at least estimate it precisely, either in a parametric setting or in a nonparametric setting. The parametric approach consists in assuming that the mean function of the Gaussian process (the drift) is a linear combination of known functions (often polynomials) and that the semi-variogram $V$ belongs to a parametric family of semi-variograms $\{V_\theta,\, \theta \in \Theta \subset \R^p\}$ for a given $p$ in $\N^*$. Furthermore, in most practical cases, the semi-variogram is assumed to stem from a stationary autocovariance function $k$ defined by $k(h)=\Cov(X(t),X(t+h))$. In that case, the process is supposed to be stationary, and $V$ can be rewritten in terms of the process autocovariance function $k$: $V(h)=k(0)-k(h)$.
Moreover, a parametric set of stationary covariance functions is considered of the form 
$\{  k_{\theta} , \theta \in \Theta \}$ with $\Theta \subset \mathbb{R}^p$. 
In such a setting, several estimation procedures for $\theta$ have been introduced and studied in the literature. Usually in practice, most of the software  packages (like, e.g.\ \texttt{DiceKriging} \cite{RGD12}) use  the maximum likelihood estimation method (MLE) to estimate $\theta$ (see \cite{stein99interpolation,SWN03,rasmussen06gaussian} for more details on MLE). Unfortunately, MLE is known to be computationally expensive and intractable for large data sets. In addition, it may diverge  in some  complicated situations (see Section \ref{ssec:real:data}). This has motivated the search for alternative estimation methods with a good balance between computational complexity and statistical efficiency. Among these methods, we can mention low rank approximation \cite{stein14limitations}, sparse approximation \cite{hensman2013}, covariance tapering \cite{furrer2006covariance,kaufman08covariance}, Gaussian Markov random fields approximation \cite{datta16hierarchical,rue05gaussian}, submodel aggregation \cite{caoGPOE,deisenroth2015,hinton2002training,rulliere2018nested,
trespBCM,van2015optimally} and composite likelihood \cite{BL19}.

\paragraph{Framework and motivation} The approaches discussed above are parametric.  In this paper, we consider a more general semi-parametric context. The Gaussian process $X$ is only assumed to have stationary increments and no parametric assumption is made on the semi-variogram $V$ in \eqref{def:variogram}.
Assume, to simplify, that  the semi-variogram is a $C^{\infty}$ function  outside  0. This is the case for most of the models  even if the sample paths are not regular, see the examples in Section \ref{ssec:ass:proc}.  Let $D$ be the order of differentiability in quadratic mean  of $(X(t))_{t\in \R}$. This is equivalent to the fact  that  $V$ is $2D$ differentiable  and not $2D+2$ differentiable.  Let us assume that the $2 D$'th derivative of $V$ has the following expansion at the origin:
\begin{align}\label{eq:expansion}
V^{(2D)}(h) =  V^{(2D)}(0)  +C  (-1)^{D}  \abs{h}^{s} +r(h),
\end{align}
where $C\geqslant 0$, $0<s<2$, and the remainder function $r$ satisfies some hypothesis detailed further (see Section \ref{ssec:ass:proc}) and is a $o(|h|^s)$ as $h \to 0$. Note that, since $s<2$,  $V$ is indeed not $(2D+2)$ differentiable. The quantity $s$ is the smoothness parameter and we call $C$ the scale parameter. In this paper, we assume $D$ and $s$ to be known and we focus on the theoretical study of the semi-parametric estimation of $C$ defined in \eqref{eq:expansion} in dimension one. Remark that this makes it possible to test whether the
Gaussian process stems from a white noise or not. Notice that we also perform some additional simulations in higher dimensions. The value of $C$ may lead to significantly different behaviors of the process $X$ as one can see in Figure \ref{fig:exemple_motiv} which represents several realizations of a Gaussian process with exponential covariance function for different values of $C$ ($V(h) = 1 - \exp(-C|h|)$, which satisfies \eqref{eq:expansion} with $D=0$). More concretely, for instance when $D = 0$, $C$ provides the first order approximation of $\E\left[\left(X(t+h)-X(t)\right)^2\right]$ when $h$ is small.
Moreover, when $D=0$, $C=+\infty$ traduces independence, i.e.\ the process $X$ reduces to a white noise whereas $C=0$ corresponds to a constant process $X$. 

\begin{figure}
\centering
\includegraphics[width=16cm]{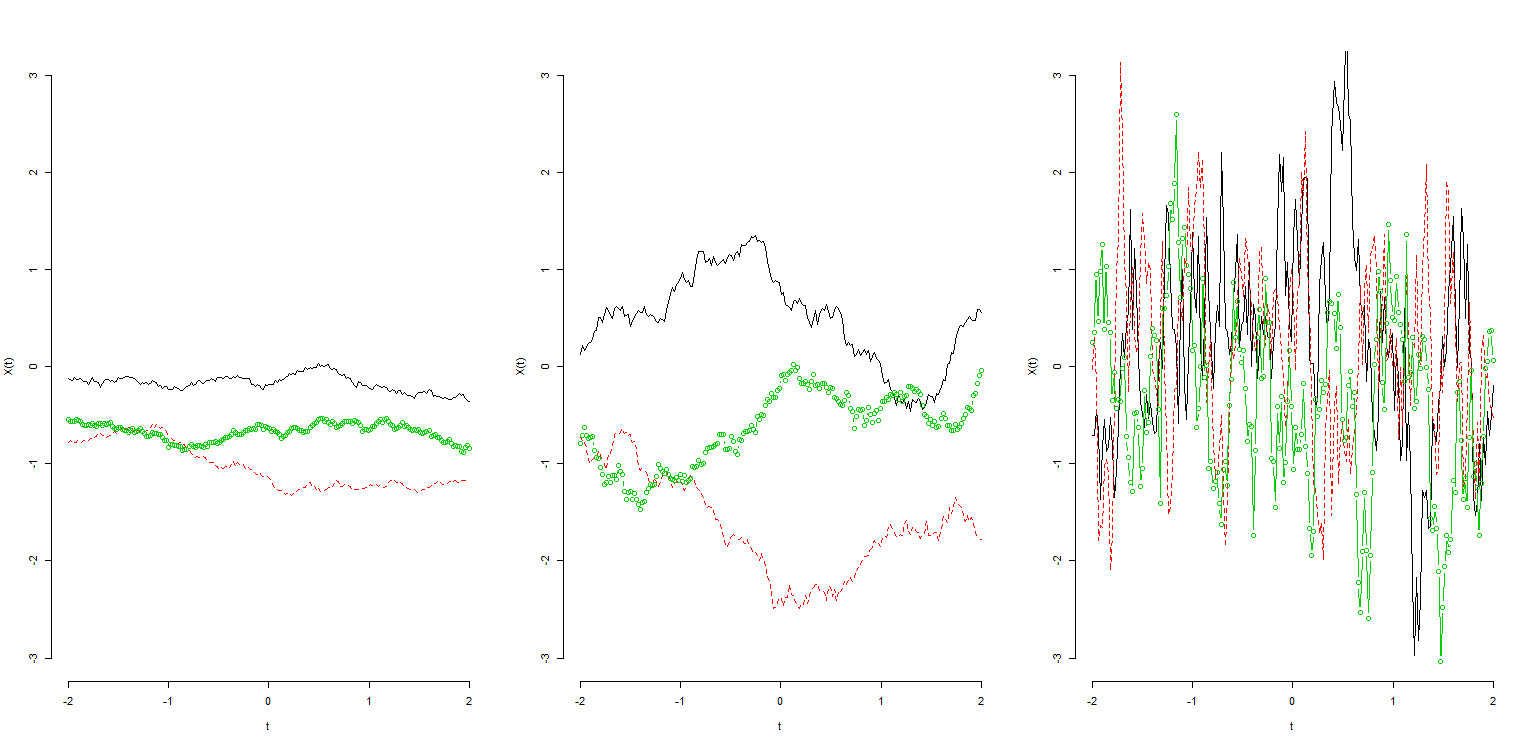}
\caption{Several realizations of Gaussian processes with exponential covariance function with parameter $C=0.1$ (left), 10 (middle), and 100 (right).
}\label{fig:exemple_motiv}
\end{figure}

As a motivating example, consider the following case where the estimation of $C$ is beneficial.
Assume that we observe a 
signal $S$ depending on a vector of initial parameters $x$ given by a computer code described by the application:
\begin{eqnarray}\label{def:code}
S  : & E  & \to  \R^{[0,1]}\\
& x & \mapsto  (S_x(t),\,t\in[0,1]),\nonumber
\end{eqnarray}
where $E$ stands for the initial parameter space. 
In order to interpret the output curve $t \mapsto S_x(t)$ for a given parameter $x$, it is useful to consider that this curve is the realization of a Gaussian process, with scale parameter $C_x$. It is then insightful to study the black-box $x \mapsto C_x$, for instance by means of a sensitivity analysis, or in the aim of finding which inputs $x$ lead to $C_x <+\infty$, that is to output curves with a dependence structure. A necessary step to such a study is the estimation of the value of $C_x$, given a discretized version of the curve $t \mapsto S_x(t)$.

More generally, estimating $C$ enables to assess if an observed signal is composed of independent components ($C = + \infty$) or not ($C < \infty$), and to quantify the level of dependence. We refer to the real data sets studied in Section \ref{ssec:real:data} for further discussion.

\paragraph{State of the art on variogram estimation} Nonparametric estimation of the semi-variogram function is a difficult task since the resulting estimator must necessarily lead to a ``valid'' variogram (conditional negative definiteness property) \cite[p. 93]{cressie93}. This requirement usually leads to complicated and computationally involved nonparametric
estimators of the variogram \cite{hall1994nonparametric,hall1994properties} that may need huge data sets to be meaningful. A simpler estimator based on the moment method has been proposed in \cite{Matheron62, cressie1980robust} but does not always conduce to a valid variogram. 
A classical approach to tackle this problem has been proposed in the geostatistics literature \cite{david2012geostatistical, journelj, clark1979practical} and consists in fitting a parametric
model of valid semi-variograms to a pointwise nonparametric semi-variogram estimator by minimizing
a given distance between the nonparametric estimator and the semi-variograms at a finite number of lags. The reader is refered to \cite[Chapter 2]{cressie93} for further details on semi-variogram model fitting and to \cite{LLC02} for least-squares methods.

\paragraph{State of the art on quadratic variations} In order to remedy the drawbacks of the MLE previously mentioned, we focus on an alternative estimation method using quadratic variations based on the observations of the process $X$ at a triangular array of points $(t_j)_j$, where $j=1,...,n$. This method is also an alternative to the existing methods mentioned in the previous paragraph. Quadratic variations have been first introduced by Levy in \cite{Levy40} to quantify the oscillations of the Brownian motion. 
Then a first result on the quadratic variation of a Gaussian non-differentiable process is due to Baxter (see e.g. \cite{Baxter56}, \cite[Chap. 5]{Grenander81} and \cite{Gladyshev61}) that ensures (under some conditions) the almost sure convergence (as $n$ tends to infinity) of 
\begin{equation}\label{def:V_1n}
 \sum_{j=1}^{n} \left(X(t_j)-X(t_{j-1}\right)^2,
\end{equation}
where $t_j=j/n$, for $j=1,...,n$ (by convention $t_0=0$ and $X_0=0$). 
A  generalization of the previous quadratic variations has been introduced in Guyon and Léon \cite{GL89}: for a given real function $H$, the $H$-variation is given by
\begin{equation}\label{def:V_Hn}
V_{H,n}\defeq \sum_{j=1}^{n} H\left(\frac{X(t_j)-X(t_{j-1})}{(\Var(X(t_j)-X(t_{j-1}))^{1/2}}\right).
\end{equation}
In \cite{GL89}, technical conditions are assumed and a smoothness parameter $0 < s <2$, similar to the one in \eqref{eq:expansion} when $D=0$, is considered.
Then the most unexpected result of \cite{GL89} is that $(V_{H,n}/n)_n$ has a limiting normal distribution with convergence rate $n^{1/2}$  when $0< s  <3/2$ whereas the limiting distribution is non normal and the convergence rate is reduced to $n^{2-s}$ when $3/2< s  <2$.
Moreover, for statistical purposes, it has been proved by Coeurjolly that quadratic variations  are optimal (details and precisions can be found in \cite{Coeurjolly01}).   
In \cite{IL97}, Istas and Lang generalized the results on quadratic variations. They allowed for observation points of the form $t_j=j\delta_n$ for $j=1,\dots, n$, with $\delta_n$ depending on $n$ and tending to 0 as $n$ goes to infinity. 
They studied the generalized quadratic variations defined by:
\begin{equation}\label{def:V_A}
V_{a,n}\defeq \sum_{i=1}^{n-1} \left( \sum_k a_k X(i+k \delta_n) \right)^2,
\end{equation}
where the sequence $a = (a_k)_k$ has a finite support and some vanishing moments. 
Then they built estimators of the smoothness parameter and the scale parameter $C$ and showed that these estimators are almost surely consistent and asymptotically normal. In the more recent work of Lang and Roueff \cite{LR01}, the authors 
generalized the results of Istas and Lang \cite{IL97} and Kent and Wood \cite{KW97} on an increment-based estimator in a semi-parametric framework with different sets of hypothesis.
Another generalization for non-stationary Gaussian processes and quadratic variations along curves is done in \cite{AP93}. See also the studies of \cite{Perrin99} and \cite{Coeurjolly01}.

\paragraph{Contributions of the paper} Now let us present the framework considered in our paper. 
We assume that the Gaussian process $X$ has stationary increments and is observed at times $t_j=j\delta_n$ for $j=1,\dots,n$ with $\delta_n$ tending to zero.
Note that $t_j$ also depends on $n$ but we omit this dependence in the notation for simplicity.
We will only consider $\delta_n = n^{-\alpha}$  with $0 < \alpha \leq 1$  throughout the article.
Two cases are then considered: $\alpha = 1$ ($\delta_n = 1/n$, infill asymptotic setting \cite{cressie93}, that we call the
infill situation throughout) and $0<\alpha<1$ ($\delta_n \to 0$ and $n \delta_n \to \infty$, mixed asymptotic setting \cite{cressie93}, that we call the mixed situation throughout).
  The paper is devoted to the estimation of the scale parameter $C$ from  one or several  generalized  quadratic $a$-variations $V_{a,n}$
defined in \eqref{def:V_A}. Calculations show that the expectation of $ V_{a,n}$ is a function of $C$ so that $C$ can be estimated by the moment method.

Our study is related to the study of  Istas and Lang \cite{IL97} in which they estimate both the scale parameter $C$ and the local Hölder index (a function of $D$ and $s$ in \eqref{eq:expansion}). Our main motivation for focusing on the case where the local Hölder index is known is, on the one hand, to provide a simpler method to implement and analyze the estimator, and on the other hand to address more advanced statistical issues, such as efficiency and aggregation of several estimators of $C$. In addition, our results hold under milder technical conditions than in \cite{IL97}, and in particular apply to most semi-variogram models commonly used in practice. In particular, we also show that a necessary condition in \cite{IL97}, namely the fact that the quantity in \eqref{eq:sumneq0} is non-zero when the variation used has a large enough order, in fact always holds. Thus, our study has a larger scope of application, in terms of necessary technical conditions, than that in \cite{IL97}.

We establish asymptotic approximations of the expectation and the variance and a central limit theorem for the quadratic variations  under consideration and for the estimators deduced from them. In particular, given a finite number of sequences $a$, we prove a joint central limit theorem (see Corollary \ref{cor:CLT_Can_joint}). In addition,  our method does not require a parametric  specification of the drift (see Section \ref{s:drift}); therefore it  is  more robust than MLE. 

For a finite discrete sequence  $a$ with zero sum, we define its order  as the largest integer $M$  such that 
$$
\sum_k a_k k^{\ell} =0 \quad \mbox{ for } \ell = 1,\ldots, M-1.
$$
Roughly speaking $ \sum_k a_k  f(k \delta_n)$  is an estimation of the $M$th derivative  of the  function $f$ at zero. The order of the simplest sequence: $-1,1$ is $M=1$. 
Natural questions then arise. What is the optimal  sequence  $a$?
 In particular, what is the optimal order? Is it better to use the elementary sequence of order 1 
 $(-1,1)$ or the one of order 2 $(-1,2,-1)$? For a given order, for example $M=1$, is it better to use  the elementary sequence of order 1 $(-1,1) $ or a more general one, for example $( -1,-2,3)$  or even a sequence based on discrete wavelets? 
   Can we efficiently  combine the information of several quadratic $a$-variations associated to several sequences?
 As far as we know, these questions are not addressed yet in the literature.  
Unfortunately, the asymptotic variance we give in Proposition \ref{prop:Van_Dqqe} or Theorem \ref{th:CLT_Can} does not allow either to address theoretically this issue. However, by Corollary \ref{cor:CLT_Can_joint}, one may gather the information of different quadratic $a$-variations with different orders. 
In order to validate such a procedure, an important Monte Carlo study is performed. The  main conclusion is that gathering the information of different quadratic $a$-variations with different orders $M$
produces closer results to  the  optimal  Cramér-Rao bound computed in Section \ref{section:cramer:rao}. The simulations are illustrated in Figure \ref{fig:Dzero:aggreg}. We also illustrate numerically the convergence to the asymptotic distribution considering different models (exponential and Matérn models).

Finally, we show that our suggested quadratic variation estimator can be easily extended to the two-dimensional case and we consider two real data sets in dimension two. When comparing our suggested estimator with maximum likelihood estimation, we observe a very significant computational benefit for our estimator.

\paragraph{Organization of the paper}
The paper is organized as follows. In Section \ref{sec:ass}, we detail the framework and present the assumptions on the process.  In Section \ref{sec:quad_a_var}, we introduce our quadratic variation estimator and provide its asymptotic properties, together with discussion.
Section \ref{section:opti} is devoted to the analysis of the statistical efficiency of our estimator.
In Section \ref{sec:num}, we provide the results of the Monte Carlo simulation and on the real data sets. 
A conclusion is provided in Section \ref{section:conclusion} together with some perspectives. 
 All the proofs have been postponed to the Appendix.

\section{General setting and assumptions}\label{sec:ass}

\subsection{Assumptions on the process}\label{ssec:ass:proc}

In this paper, we consider a Gaussian process $(X(t))_{t\in \R}$ which is not necessarily stationary but only has stationary increments. The process is observed at times $j\delta_n$ for $j=0,\dots,n$ with $\delta_n$ going to $0$ as $n$ goes to infinity. 
As mentioned in the introduction, we will only consider $\delta_n = n^{-\alpha}$  with $0 < \alpha \leq 1$  throughout the article.
Two cases are then considered: $\alpha = 1$ ($\delta_n = 1/n$, infill situation) and $0<\alpha<1$ ($\delta_n \to 0$ and $n \delta_n \to \infty$, mixed situation).
The semi-variogram of $X$ is defined by 
\begin{align*}
V(h):=\frac12\E\left[\left(X(t+h)-X(t)\right)^2\right].
\end{align*}
In the sequel, we denote by $(Const)$ a positive constant which value may change from one occurrence to another. 
For the moment, we assume that $X$ is centered, the case  of non-zero expectation will be considered  in Section  \ref{s:drift}. Now, we introduce  the following assumptions. The form of $\left(\mathcal{H}_{1}\right)$ and $\left(\mathcal{H}_{2}\right)$  change following whether we are in infill situation  or  in the particular mixed situation ($\delta_n=n^{-\alpha}$ with $0<\alpha< 1$).

\medskip

$\left(\mathcal{H}_{0}\right)$
 $V$ is a $C^{\infty}$ function on $(0,+\infty]$.

\medskip
{\bf  Infill situation:} $\delta_n = 1/n$. \\
$\left(\mathcal{H}_{1}\right)$ The semi-variogram  is $2D$ times differentiable with $D \geq 0$ and there exists $C>0$ and $0< s <2$ such that for any $h\in \R$, we have
\begin{equation} \label{e:cov}
V^{(2D)}(h)=   V^{(2D)}(0)  +C  (-1)^{D}  \abs{h}^{s}+r(h), \textrm{ with } \ r(h)=o(\abs{h}^s)  ~ ~   as \abs{h} \to 0.
\end{equation}

In $\left(\mathcal{H}_{1}\right)$, the integer $D$ is the greatest integer such that $V$ is $2D$-times differentiable everywhere.
We recall that, when $X$ is assumed to be a stationary process, we have $V(h)=k(0)-k(h)$. If the covariance function $k$ belongs to a parametric set of the form $\{  k_{\theta} ; \theta \in \Theta \}$ with $\Theta \subset \mathbb{R}^p$, then $C$ is a deterministic function of the parameter $\theta$.

$\left(\mathcal{H}_{2}\right)$ For some $\beta< -1/2$, for $\abs{h} <1$:
\begin{itemize} 
\item when $s<3/2$,
$$
\bigl| r^{(2)}(h)\bigr|  \leq (Const) \abs{h}^\beta;
$$
\item when $s\geq 3/2$,
$$
\bigl| r^{(3)}(h)\bigr|  \leq (Const) \abs{h}^\beta.
$$
\end{itemize}

$\left(\mathcal{H}_{3}\right)$ 
As $h\to 0$, 
$$
\abs{r(h)} = o \bigl( \abs{h}^{s+1/2}  \bigr).
$$

  \medskip

{\bf Mixed situation :} $\delta_n=n^{-\alpha}$ with $0<\alpha<1$.

We must add to $\left(\mathcal{H}_{1}\right)$: 
$$
 \abs{r(h)} \leq (Const) \abs{h}^s \quad \abs{h} >1.
 $$

The new expression  of $\left(\mathcal{H}_{2}\right)$ is 

\begin{itemize} 
\item when $s<3/2$, there exists $\beta$ with $s-2<\beta<-1/2$ such that,  for all $h \in \mathbb{R}$,
$$
\bigl| r^{(2)}(h)\bigr|  \leq (Const) \abs{h}^\beta;
$$
\item when $s\geq 3/2$, there exists $\beta$ with $s-3<\beta<-1/2$ such that,  for all $h \in \mathbb{R}$,
$$
\bigl| r^{(3)}(h)\bigr|  \leq (Const) \abs{h}^\beta.
$$
\end{itemize}
Here $\left(\mathcal{H}_{3}\right)$ writes
$$
\abs{r(h)} =  o \bigl( \abs{h}^{s+(1/2\alpha)}  \bigr),
$$
as $h\to 0$. 

\begin{rmk} \leavevmode
\begin{itemize}
\item When $D>0$, the $D$-th derivative $X^{(D)}$ in quadratic mean of $X$ is a Gaussian  stationary process with autocovariance function $k$ given by $k(h)=(-1)^{D+1}  V^{(2D)}(h)$. This implies that  the Hölder exponent of the paths of  $X^{(D)}$ is $s/2$. Because $s<2$,
$D$ is exactly the order  of differentiation  of  the paths of $X$. 
\item If we denote $H=D+s/2$, $H$ represents the local Hölder index of the process \cite{ibragimov78gaussian}.
\item Note that in the infill situation ($\delta_n=1/n$), $\left(\mathcal{H}_{2}\right)$ is almost minimal. Indeed, the condition $\beta<-1/2$ does not matter since the smaller $\beta$, the weaker the condition. And  for example, when $s<3/2$, the second derivative of the main term is of order $ \abs{h}^{s-2}$ and we only assume that $\beta>s-2$. 
\end{itemize}
\end{rmk}

\subsection{Examples of processes that satisfy our assumptions}\label{ssec:ex}

We present a non exhaustive list of examples in dimension one that satisfy our hypotheses. In these examples, we provide a stationary covariance function $k$, and we recall that this defines $V$, with $V(h) = k(0) - k(h)$. 
 \begin{itemize}
 \item  The exponential  model:  $k(h)=  \exp(- C |h|)$ ($D=0$, $s =1$, $C=C$). For this model, $\left(\mathcal{H}_{0}\right)$ to $\left(\mathcal{H}_{2}\right)$ always hold and $\left(\mathcal{H}_{3}\right)$ holds when $\alpha >1/2$, that is when the observation domain does not increase too fast.
 \item  The generalized exponential model:  $k(h) = \exp(- C |h|^s)$, $s \in(0,2)$ ($D=0$, $s =s$, $C=C$). For this model, $\left(\mathcal{H}_{0}\right)$ to $\left(\mathcal{H}_{2}\right)$ always hold and $\left(\mathcal{H}_{3}\right)$ holds when $1/(2\alpha) < s$. Hence, in the infill situation, we need $s >1/2$ and, in the mixed situation, the observation domain needs to increase slowly enough.
 \item The generalized Slepian model \cite{Slepian63}:  $k(h) =(1 - C |h|^s)^+, s \in(0,1]$ ($D=0$, $s =s$, $C=C$). For this model, $\left(\mathcal{H}_{0}\right)$ to $\left(\mathcal{H}_{3}\right)$ hold in the infill situation and when $C <1$. For $\left(\mathcal{H}_{0}\right)$, we remark that, in this case, $V$ is smooth on $(0,1]$ and not on $(0,\infty)$, but this is sufficient for all the results to hold.
 \item  The Matérn model:
\[
  k(h) =\frac{2^{1-\nu}
  }{\Gamma(\nu)} \big( \sqrt{2\nu} \theta h \big) ^\nu K_\nu ( \sqrt{2\nu} \theta h ),
\]
where $\nu>0$ is the regularity parameter of the process. The function $K_{\nu}$ is the modified Bessel function of the second kind of order $\nu$. See, e.g., \cite{stein99interpolation} for more details on the model. In that case, $D=\lfloor \nu\rfloor$ and $s = 2\nu-2D$. Here, it requires tedious computations to express the scale parameter $C$ as a function of $\nu$ and $\theta$. However, in Section \ref{ssec:simu}, we derive the value of $C$ in two settings ($\nu=3/2$ and $\nu=5/2$).
For this model, $\left(\mathcal{H}_{0}\right)$ to $\left(\mathcal{H}_{2}\right)$ always hold and $\left(\mathcal{H}_{3}\right)$ holds when $s <2-1/(2\alpha)$. Hence, in the infill situation we need $s <3/2$, and in the mixed situation the observation domain needs to increase slowly enough.
\end{itemize}
All the previous examples are stationary (and thus have stationary increments). The following one is not stationary.\begin{itemize}
 \item The fractional Brownian motion (FBM) process denoted by $(B_s(t))_{t\in \R}$ and  defined by 
\[
 \Cov (B_s(u),B_s(t)) =C \big(  |u|^s + |t|^s -|u-t|^s\big).
 \]
 A reference  on this subject is  \cite{CI13}.
   This process is classically indexed by its Hurst parameter $H= s/2$. 
Here, $D=0$, $s=s$ and $C=C$. We call the FBM defined by $C = 1$ the standard FBM.
\end{itemize}

\medskip

We remark that the Gaussian model, or square-exponential, defined by $k(h) = \sigma^2 e^{-h^2 \theta^2}$, with $(\sigma^2,\theta) \in (0,\infty)$, does not satisfy our assumptions, because it is too regular (i.e.\ it is $C^{\infty}$ everywhere).

 \paragraph{Detailed verification of the assumptions with the generalized exponential model}
 
We consider the generalized exponential model, where $k(h) = \exp(- C |h|^s)$ for some fixed $s \in(0,2)$.
Since we have $V(h) = 1 - \exp(- C |h|^s)$ for $h \in \mathbb{R}$, $\left(\mathcal{H}_{0}\right)$ is trivially satisfied. Now we show that $\left(\mathcal{H}_{1}\right)$ holds for $D=0$. Indeed, $V$ is a continuous function and we have
\begin{align*}
V(h) & = 0 + C |h|^s  + 1 -  C |h|^s - \exp(- C |h|^s) \\
& \defeq V(0) + C(-1)^0 |h|^s  + r(h),
\end{align*}
with 
\[
r(h) = 1 -  C |h|^s - \exp(- C |h|^s). 
\]
As $h \to 0$, $r(h) = O(|h|^{2s}) = o (|h|^s)$ and thus $\left(\mathcal{H}_{1}\right)$ holds in the infill situation. Furthermore, as $|h| \to \infty$,  $r(h) = -C |h|^s + o(1) = O( |h|^s )$ and so  $\left(\mathcal{H}_{1}\right)$ holds also in the mixed situation.
Let us now show that $\left(\mathcal{H}_{2}\right)$ is also satisfied.   
First consider the case where $s< 3/2$ and let
 \[
 \beta = 
 \begin{cases}
 \frac{1}{2} \left( (s-2) + (-1/2)  \right)
 & \text{if $2s-2 > -1/2 \quad (s > 3/4)$,} \\
\frac{1}{2} \left( (s-2) + (2s-2) \right)
 & \text{if  $2s-2 \leqslant -1/2 \quad (s \leqslant 3/4)$}.
 \end{cases}
 \] 
 One can check that $s-2 < \beta $ and that $\beta < -1/2$ since $s-2 < -1/2$. 
Consider the case where $|h| \leq 1$. Since
\[
r(h) = \sum_{k=2}^{\infty} 
(-1)^{k+1} \frac{C^k}{k!}
|h|^{sk},
\]   
one has
\begin{align*}
r^{(2)}(h)
& = 
|h|^{2s-2}
\sum_{k=2}^{\infty} 
(-1)^{k+1} \frac{C^k}{k!}
sk(sk-1)
|h|^{s(k-2)}  = |h|^{2s-2} g(h)
\end{align*}
where $g$ is a bounded function on $[-1,1]$. Hence, $|r^{(2)}(h)|  \leq (Const) |h|^{\beta}$ since $2s-2 \geq \beta$. Then the case $s< 3/2$ is complete in the infill situation.
 Consider now the case where $|h| \geq 1$. Simply, one can show that
 \[
 r^{(2)}(h) =  -C s (s-1) |h|^{s-2} + \left( -Cs(s-1)|h|^{s-2} 
+ C^2 s^2 |h|^{2s-2} 
 \right) \exp( -C|h|^s ).
 \]
 Hence 
 \[
  |r^{(2)}(h)| \leq (Const) |h|^{s-2} \leq (Const) |h|^{\beta} 
\quad \text{since $\beta > s-2$}. 
  \] 

The case where $s \geq 3/2$ can be treated analogously.
Finally, it is simple to show that $\left(\mathcal{H}_{3}\right)$ holds when $1/(2\alpha) < s$.

\subsection{Discrete \texorpdfstring{$a$}{f}-differences}\label{ssec:res_a_var}

Now, we consider a non-zero finite support sequence $a$ of real numbers with zero sum.  Let $L(a)$ be its length. Since the starting point of the sequence plays no particular role, we will assume when possible that  the first non-zero element is  $a_0$. Hence, the last  non-zero element  is $a_{L(a)-1}$. 
We define  the order $M(a)$ of the sequence as the first non-zero moment of the sequence $a$:
\begin{align*}
\sum_{j =0} ^{L(a) -1}  a_j j^k=0, \quad \textrm{for} \quad 0\leq k<M(a) \quad \textrm{and} \quad \sum_{j =0} ^{L(a) -1} a_j j^{M(a)}\neq 0.
\end{align*}

To any sequence $a$,  with length $L(a)$ and any function $f$, we associate the discrete $a$-difference of $f$ defined by
\begin{align}\label{def:delta_a}
\Delta_{a,i}(f)= \sum_{j=0} ^{L(a)-1} a_j f((i+j)\delta_n),\quad i=1,\dots n',
\end{align}
where $n'$ stands for $n-L(a)+1$.  As a matter of fact, in the case of the simple quadratic $a$-variation given by $a_0=0$ and $a_1=-1$, the operator $\Delta_a$ is a discrete differentiation operator of order one. More generally,
$\sum_{j=0} ^{L(a) -1} a_j f(j\delta_n)$ is an approximation (up to some multiplicative coefficient) of the $M(a)$-th derivative  (when it exists) of the function $f$ at zero. 

\medskip

We also define  $\mathbf{\Delta_{a}}(X)$  as the Gaussian vector of size $n'$ with entries  $\Delta_{a,i}(X) $ and $\Sigma_{a}$ its variance-covariance matrix. 

\medskip

\tbf{Examples - Elementary sequences}. The simplest case  is  the  order 1 elementary sequence $a^{(1)}$  defined by  $a_0^{(1)}=-1$ and $a_1^{(1)}=1$  We have  $L(a^{(1)})=2$,  $M(a^{(1)})=1$. More  generally, 
we define  the $k$-th order elementary sequence $a^{(k)}$  as the sequence  with  coefficients $a_j^{(k)}=(-1)^{k-j}\binom{k}{j}$, $j=0,\dots,k$. Its length is given by $L(a^{(k)})=k+1$. 

\medskip

For two sequences $a$ and $a'$, we define their convolution $b=a*a'$  as the sequence given by $b_j=\sum_{k-l=j} a_ka'_l$. In particular, we denote by $a^{2*}$ the convolution $a*a$. Notice that the first non-zero element of $b$ is not necessarily $b_0$ but $b_{L(a')-1}$ as mentioned in the following properties.


\begin{proptes} The following properties of convolution of sequences are direct. \leavevmode 
\begin{enumerate}
\item[(i)]  The support of $a*a'$ (the indices of the non-zero elements)  is included in $-(L(a')-1), (L(a)-1)$ while its order  is $M(a) +M(a')$. In particular, $a^{2*}$ has length $2L(a)-1$, order $2M(a)$ and is symmetrical.
\item[(ii)] The composition of two elementary sequences gives another elementary sequence.
\end{enumerate}
\end{proptes}

The main result of this section is Proposition \ref{prop:sumneq0} that is required to quantify the asymptotic behaviors of the two first moments of the quadratic $a$-variations defined in \eqref{def:Van} (see Proposition \ref{prop:Van_Dqqe}). In order to prove \eqref{eq:sumneq0}, we establish two preliminary tools (Proposition \ref{prop:ifbm} and Lemma \ref{l:jma}). 
In that view,  we need to define the integrated fractional Brownian motion (IFBM). We start from the FBM defined in Section \ref{ssec:ex} which has 
 the following  non anticipative  representation:
\[
   B_s(u) = \int_{-\infty} ^u f_s(t,u) dW(t),
\]
where $dW(t)$ is a white noise defined on the whole real line  and 
\[
   f_s(t,u)  = (Const)  \big( ((u-t)^+)^{(s-1)/2} - ((-t)^+)^{(s-1)/2} \big).
\]
   
For $m\geq0$ and $t\geq 0$, we define inductively the IFBM  by
\begin{align*}
B_s^{(-0)}(u) &= B_s(u)\\
B_s^{(-m)}(u) &= \int_0^u  B_s^{(-(m-1))}(t) dt.
\end{align*}
          
\begin{definition} [Non degenerated property]
A process $Z$ has the ND property if for every $k>0$ and every $ t_1<t_2 <\dots < t_k$  belonging to the domain of definition of  $Z$, the distribution of 
$
Z (t_1),\dots,Z(t_k)   $  is non degenerated.
\end{definition}

 We have the following results.

 \begin{prop} \label{prop:ifbm}
 The IFBM has the ND property.
 \end{prop}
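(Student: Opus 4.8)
The plan is to show that for any $k \geq 1$ and any distinct points $t_1 < t_2 < \dots < t_k$ in $[0,\infty)$, the Gaussian vector $(B_s^{(-m)}(t_1),\dots,B_s^{(-m)}(t_k))$ has an invertible covariance matrix. Equivalently, no nontrivial linear combination $\sum_{i=1}^k \lambda_i B_s^{(-m)}(t_i)$ is almost surely zero. The key is the non-anticipative white-noise representation: since $B_s(u) = \int_{-\infty}^u f_s(t,u)\,dW(t)$, iterating the integration we get $B_s^{(-m)}(u) = \int_{-\infty}^u g_m(t,u)\,dW(t)$ for a kernel $g_m(t,u)$ obtained by integrating $f_s$ in its second variable $m$ times (and for $t < 0$ one should track the contribution of the $((-t)^+)^{(s-1)/2}$ term carefully, but on $t \geq 0$ it vanishes and $g_m(t,u) = (Const)\int_t^u \frac{(u-v)^{m-1}}{(m-1)!}(v-t)^{(s-1)/2}\,dv$ up to the lower-order polynomial corrections, which is a positive function of $u$ for $t < u$).

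Then I would argue by contradiction: suppose $\sum_i \lambda_i B_s^{(-m)}(t_i) = 0$ a.s. with not all $\lambda_i$ zero. By the Itô isometry this forces $\sum_i \lambda_i g_m(t, t_i) = 0$ for Lebesgue-almost every $t \in \R$. Restricting to the interval $t \in (t_{k-1}, t_k)$, only the $i=k$ term survives (since $g_m(t,t_i) = 0$ for $t \geq t_i$), so $\lambda_k g_m(t,t_k) = 0$ on a set of positive measure; since $g_m(\cdot,t_k)$ is not a.e.\ zero on $(t_{k-1},t_k)$, we get $\lambda_k = 0$. Descending inductively over the intervals $(t_{j-1},t_j)$ kills every $\lambda_j$ in turn, contradicting non-triviality. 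Hence the covariance matrix is nonsingular and the ND property holds. For $k$ points including possibly $t_1 = 0$, note $B_s^{(-m)}(0) = 0$ only if $m = 0$ fails — actually $B_s(0) = 0$, so one should take $t_1 > 0$; the statement implicitly concerns points in the domain where the process is nondegenerate, and $t_1 = 0$ gives $B_s^{(-m)}(0)=0$ for all $m \geq 0$, a genuinely degenerate point, so the claim should be read with $t_i > 0$ (or one excludes $0$). I would state this restriction explicitly.

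The main obstacle I anticipate is handling the kernel on $t < 0$: the representation of $B_s$ involves the difference $((u-t)^+)^{(s-1)/2} - ((-t)^+)^{(s-1)/2}$, and after $m$ iterated integrations in $u$ the negative-$t$ part of $g_m$ is a nontrivial function of $t$ that does not obviously vanish, so the clean "triangular" cancellation argument on the intervals $(t_{j-1},t_j)$ uses only the positive-$t$ piece of the support. Fortunately that is enough: the intervals $(t_{j-1}, t_j)$ with $j \geq 2$ lie in $t > 0$, and the argument needs only that on each such interval the surviving kernel $g_m(\cdot, t_j)$ is not a.e.\ zero, which follows from its explicit positivity there. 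The one delicate spot is the innermost step eliminating $\lambda_1$ from $(0, t_1)$ or from $(-\infty, t_1)$, where the negative-$t$ contributions of all kernels are present simultaneously; here I would instead use the already-established fact that $\lambda_2 = \dots = \lambda_k = 0$ so that $\lambda_1 B_s^{(-m)}(t_1) = 0$ a.s., and $B_s^{(-m)}(t_1)$ has strictly positive variance for $t_1 > 0$ (its kernel $g_m(\cdot,t_1)$ is not identically zero), forcing $\lambda_1 = 0$. This avoids ever needing a precise formula for the negative-$t$ part of $g_m$.
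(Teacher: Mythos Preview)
Your proposal is correct and follows essentially the same approach as the paper: both derive the non-anticipative representation $B_s^{(-m)}(u) = \int_{-\infty}^{u} g_{m,s}(u,t)\,dW(t)$ and then exploit the triangular (adapted) structure of the kernels to peel off the coefficients one at a time. The paper phrases the induction step via independence of the increment $\int_{t_{k-1}}^{t_k} g_{m,s}(t_k,t)\,dW(t)$ from $(B_s^{(-m)}(t_1),\dots,B_s^{(-m)}(t_{k-1}))$ rather than via the It\^o isometry, but the content is the same, and your extra care about the point $t=0$ and the negative-$t$ part of the kernel is well placed (the paper simply starts the argument with $0<t_1<\dots<t_k$).
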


 \begin{lemma} \label{l:jma} 
 The variance function  of the IFBM  satisfies, for all $m \in \mathbb{N}$,
\[
 \Var  \big( B_s^{(-m)}(u) -  B_s^{(-m)}(v) \big) =  
 \sum_{i=1}^{N_m} \left(
 P^{m,i}(v) h_{m,i}(u)
  + 
 P^{m,i}(u) h_{m,i}(v) \right)
 + (-1)^m \frac{ 2|u-v|^{s+2m}}{
 (s+1)\dots (s+2m)},
\]
 where $N_m \in \mathbb{N}$, for $i=1,...,N_m$, $P^{m,i}$ is a polynomial of degree less or equal to $m$ and $h_{m,i}$ is some function.
 \end{lemma}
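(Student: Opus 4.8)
The plan is to argue by induction on $m$, exploiting the recursion $B_s^{(-m)}(u)-B_s^{(-m)}(v)=\int_v^u B_s^{(-(m-1))}(t)\,dt$ together with the polarization identity; throughout one may assume $v\le u$, since the claimed expression is symmetric in $(u,v)$. For $m=0$, $B_s(0)=0$ and $\Var(B_s(u)-B_s(v))=2|u-v|^s$, so the formula holds with $N_0=0$ and the empty product in the denominator equal to $1$. For the inductive step, fix $m\ge1$ and assume the formula at level $m-1$. Since $B_s^{(-(m-1))}$ is a centered continuous Gaussian process vanishing at $0$, the variable $\int_v^u B_s^{(-(m-1))}(t)\,dt$ is centered Gaussian and, by Fubini (the covariance kernel being continuous on the compact $[v,u]^2$),
\[
\Var\!\bigl(B_s^{(-m)}(u)-B_s^{(-m)}(v)\bigr)=\int_v^u\!\!\int_v^u C_{m-1}(t,t')\,dt\,dt',\qquad C_{m-1}(t,t')\defeq\Cov\!\bigl(B_s^{(-(m-1))}(t),B_s^{(-(m-1))}(t')\bigr).
\]
Because the process vanishes at $0$, polarization gives $C_{m-1}(t,t')=\tfrac12\bigl(v_{m-1}(t)+v_{m-1}(t')-w_{m-1}(t,t')\bigr)$, where $v_{m-1}(t)\defeq\Var(B_s^{(-(m-1))}(t))$ comes from the induction hypothesis with second argument $0$ and $w_{m-1}(t,t')\defeq\Var(B_s^{(-(m-1))}(t)-B_s^{(-(m-1))}(t'))$ from the induction hypothesis directly.

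It then remains to integrate $C_{m-1}$ over $[v,u]^2$ term by term, which produces three kinds of contributions. First, the leading term of $-\tfrac12 w_{m-1}$ integrates, via $\int_v^u\!\int_v^u|t-t'|^{p}\,dt\,dt'=\tfrac{2(u-v)^{p+2}}{(p+1)(p+2)}$ with $p=s+2m-2$, to exactly $(-1)^m\tfrac{2(u-v)^{s+2m}}{(s+1)\cdots(s+2m)}=(-1)^m\tfrac{2|u-v|^{s+2m}}{(s+1)\cdots(s+2m)}$, the claimed leading term. Second, each product coming from the sums in $w_{m-1}$ equals $\bigl(\int_v^u P^{m-1,i}\bigr)\bigl(\int_v^u h_{m-1,i}\bigr)=(Q_i(u)-Q_i(v))(H_i(u)-H_i(v))$, with $Q_i$ an antiderivative of the polynomial $P^{m-1,i}$ (hence a polynomial of degree $\le m$) and $H_i$ an antiderivative of $h_{m-1,i}$; expanding gives cross terms $-Q_i(u)H_i(v)-Q_i(v)H_i(u)$ and diagonal terms $Q_i(u)H_i(u)+Q_i(v)H_i(v)$. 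Third, the part $\tfrac12(v_{m-1}(t)+v_{m-1}(t'))$ integrates to $(u-v)(\widetilde W(u)-\widetilde W(v))$ for an antiderivative $\widetilde W$ of $v_{m-1}$, which splits analogously into $-u\widetilde W(v)-v\widetilde W(u)$ and $u\widetilde W(u)+v\widetilde W(v)$. Every cross term is already of the required form $P(v)h(u)+P(u)h(v)$ with $P$ a polynomial of degree $\le m$ (namely $P=-Q_i$, or $P(x)=-x$), and every diagonal term $f(u)g(u)+f(v)g(v)$ has this form with the constant polynomial $P\equiv1$ and $h=fg$, which is allowed since $h_{m,i}$ is unconstrained. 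Gathering all contributions into one finite sum yields the identity at level $m$.

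The only genuine obstacle is the bookkeeping in this last step: one must verify that antiderivatives of the $P^{m-1,i}$ remain polynomials of degree $\le m$, and that the non-polynomial pieces — antiderivatives of the $h_{m-1,i}$ and of the power $t^{s+2m-2}$, and products thereof with low-degree polynomials — can all be absorbed into the $h_{m,i}$. This is routine once, in each product, one decides which factor is declared the polynomial; everything else reduces to polarization and the elementary double integral of $|t-t'|^p$.
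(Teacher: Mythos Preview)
Your proof is correct and follows the same inductive strategy as the paper's: base case $m=0$, polarization to express the covariance in terms of increment variances, and the explicit double integral of $|t-t'|^{s+2m-2}$ to produce the leading term with the right constant. The only organizational difference is that you integrate the covariance directly over $[v,u]^2$, whereas the paper integrates over $[0,u]\times[0,v]$ to get the kernel $K^{(-(m+1))}(u,v)$ and then recovers the increment variance via $K(u,u)+K(v,v)-2K(u,v)$; your route is slightly more economical but the substance is the same.
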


 \begin{prop}\label{prop:sumneq0} If the sequence $a$ has order $M(a)>D$, then
\begin{align}\label{eq:sumneq0}
\sum_{j}a_j^{2*} \abs{j}^{2D+s} \neq 0 \quad (\trm{i.e.} \quad (-1)^D\sum_{j}a_j^{2*} \abs{i}^{2D+s} < 0).
\end{align}
\end{prop}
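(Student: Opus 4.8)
The plan is to connect the sign of $\sum_j a_j^{2*}|j|^{2D+s}$ to the variance of an $a$-difference of the integrated fractional Brownian motion $B_s^{(-D)}$, and then to invoke the non-degeneracy property (Proposition \ref{prop:ifbm}) to rule out vanishing. Concretely, fix $\delta=1$ (the scaling in $\delta_n$ is irrelevant here, since everything is homogeneous of degree $2D+s$ in the spacing) and consider the process $Z=B_s^{(-D)}$. I would compute $\Var\!\left(\sum_{j} a_j Z(j)\right)$ using the bilinearity of the variance and Lemma \ref{l:jma}. Writing $\Var(Z(u)-Z(v))$ via Lemma \ref{l:jma}, the double sum $\sum_{j,k} a_j a_k \,(\cdots)$ splits into a ``polynomial part'' coming from the terms $P^{D,i}(v)h_{D,i}(u)+P^{D,i}(u)h_{D,i}(v)$ and a ``power part'' coming from $(-1)^D\frac{2|u-v|^{s+2D}}{(s+1)\cdots(s+2D)}$.

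The first key step is to show the polynomial part contributes zero. Here I would use that $a$ has order $M(a)>D$, hence $\sum_j a_j j^\ell = 0$ for all $0\le \ell \le D$, so in particular $\sum_j a_j P^{D,i}(j)=0$ for every polynomial $P^{D,i}$ of degree $\le D$; this kills every term of the form $P^{D,i}(j)h_{D,i}(k)$ after summing over $j$ (and symmetrically over $k$). One must be a little careful with the standard identity $\Var(\sum_j a_j Z(j)) = -\tfrac12\sum_{j,k} a_j a_k \Var(Z(j)-Z(k))$, which holds precisely because $\sum_j a_j=0$; I would state this cleanly first. The second key step is that the remaining power part gives
\[
\Var\!\Big(\sum_{j} a_j Z(j)\Big) \;=\; -\frac12\cdot\frac{(-1)^D \,2}{(s+1)\cdots(s+2D)}\sum_{j,k} a_j a_k |j-k|^{2D+s}
\;=\; \frac{(-1)^{D+1}}{(s+1)\cdots(s+2D)}\sum_{\ell} a_\ell^{2*}|\ell|^{2D+s},
\]
using that $\sum_{j,k} a_j a_k |j-k|^{2D+s} = \sum_{\ell} a^{2*}_\ell |\ell|^{2D+s}$ by definition of the convolution $a^{2*}$ (and $|{-\ell}|=|\ell|$).

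The final step is to observe that the support of $a$ contains at least two points (any nonzero sequence with zero sum has $L(a)\ge 2$), so by Proposition \ref{prop:ifbm} the Gaussian vector $(Z(0),\dots,Z(L(a)-1))$ is non-degenerate, whence any nontrivial linear combination $\sum_j a_j Z(j)$ has strictly positive variance. Since the denominator $(s+1)\cdots(s+2D)>0$, this forces $(-1)^{D+1}\sum_\ell a^{2*}_\ell|\ell|^{2D+s}>0$, i.e. $(-1)^D\sum_\ell a^{2*}_\ell|\ell|^{2D+s}<0$, which is exactly \eqref{eq:sumneq0}; in particular the sum is nonzero. I expect the main obstacle to be the bookkeeping in the first step — verifying rigorously that \emph{every} polynomial-type term in Lemma \ref{l:jma}, including possibly singular or otherwise unpleasant companion functions $h_{D,i}$, is annihilated after summation against $a$ (it is, because in each product exactly one factor is the polynomial $P^{D,i}(\cdot)$ of degree $\le D$ evaluated at integers, and the vanishing moments handle it), and in making sure the reduction $\Var(\sum a_j Z(j)) = -\tfrac12\sum a_j a_k \Var(Z(j)-Z(k))$ is applied correctly. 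Everything else is a short homogeneous computation.
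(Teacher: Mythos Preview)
Your proposal is correct and follows essentially the same route as the paper: both arguments use Lemma~\ref{l:jma} together with the vanishing moments of $a$ (of order $\le D$) to reduce $\sum_\ell a^{2*}_\ell|\ell|^{2D+s}$, up to the sign $(-1)^{D+1}$ and a positive constant, to $\Var\big(\sum_j a_j B_s^{(-D)}(j)\big)$, and then invoke the ND property (Proposition~\ref{prop:ifbm}) to conclude strict positivity. Your write-up is in fact more explicit than the paper's about the identity $\Var(\sum_j a_j Z(j))=-\tfrac12\sum_{j,k}a_ja_k\Var(Z(j)-Z(k))$ and the constants; one tiny caveat is that Proposition~\ref{prop:ifbm} is stated for strictly positive times and $B_s^{(-D)}(0)=0$, so the non-degeneracy should be applied to $(Z(1),\dots,Z(L(a)-1))$ after noting the $j=0$ term drops out---a point the paper also leaves implicit.
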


Note that  \eqref{eq:sumneq0} is stated as  an hypothesis in \cite{IL97}.

\section{Quadratic \texorpdfstring{$a$}{f}-variations}\label{sec:quad_a_var}

\subsection{Definition}

Here, we consider the discrete $a$-difference applied to the process $X$ and we define the quadratic $a$-variations by
\begin{align}\label{def:Van}
V_{a,n} = \| \mathbf{\Delta_a}(X)\|^2 =\sum_{i=1} ^{n'}(\Delta_{a,i}(X))^2,
\end{align}
recalling that $n'=n-L(a)+1$. When no confusion is possible, we will use the shorthand notation $L$ and $M$ for $L(a)$ and $M(a)$.

\subsection{Main results on quadratic \texorpdfstring{$a$}{f}-variations}\label{ssec:results_main}

The basis of our  computations of variances is the identity
\begin{align}\label{eq:prop3_aa}
\E[\Delta_{a,i}(X)\Delta_{a',i'}(X)] = -\Delta_{a*a',i-i'}(V),
\end{align}
for any sequences $a$ and $a'$.
A second main tool is the  Taylor expansion with integral remainder (see, for example, \eqref{e:zaza2}). So we introduce another notation. For a sequence $a$, a scale $\delta$, an order $q$ and a function $f$, we define 
\begin{align}\label{def:R}
R(i,\delta,q,f,a) 
& = - \sum_{j} a_j j^q \int_0^1 \frac{(1-\eta)^{q-1}}{(q-1)!} f((i+j\eta)\delta)d\eta.
\end{align}
By convention, we let $R(i,\delta,0,f,a)= -\Delta_{a,i}(f)$. Note that $R(-i,\delta,2q,\abs{\cdot{}}^s,a*a')=R(i,\delta,2q,\abs{\cdot{}}^s,a'*a)$.
One of our main results is the following. 

\begin{prop}[Moments of $V_{a,n}$]\label{prop:Van_Dqqe}
Assume that $V$ satisfies $\left(\mathcal{H}_{0}\right)$ and $\left(\mathcal{H}_{1}\right)$. 

{\bf 1)} If we choose a sequence $a$ such that $M>D$, then
\begin{align}\label{eq:esp_van_Dqqe}
\E[V_{a,n}] = n C (-1)^D \delta_n^{2D+s} \left[R(0,1,2D,\abs{\cdot}^s,a^{2*})\right](1+o(1)),
\end{align}
as $n$ tends  to infinity. Furthermore, $ (-1)^D R(0,1,2D,\abs{\cdot}^s,a^{2*})$ is positive. 

\medskip

{\bf 2)} If $V$ satisfies additionally  $\left(\mathcal{H}_{2}\right)$  and if we choose a sequence $a$ so that $M>D+s/2+1/4$, then as $n$ tends  to infinity:
\begin{align}\label{eq:var_van_Dqqe}
\Var(V_{a,n}) = 2n C^2\delta_n^{4D+2s}  
\sum_{i\in \Z}  R^2(i,1,2D,\abs{\cdot{}}^{s},a^{2*})  (1+o(1))
\end{align}
 and the series above is positive and finite. 
\end{prop}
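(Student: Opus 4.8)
The plan is to compute both moments by expanding the semi-variogram $V$ around the origin using \eqref{e:cov}, and then to carefully estimate the error terms coming from the remainder $r$. The starting point is the identity \eqref{eq:prop3_aa}, which gives $\E[\Delta_{a,i}(X)\Delta_{a,i'}(X)] = -\Delta_{a*a,i-i'}(V)$, hence
\[
\E[V_{a,n}] = \sum_{i=1}^{n'} \bigl(-\Delta_{a^{2*},0}(V)\bigr) = -n'\,\Delta_{a^{2*},0}(V),
\]
since $\Delta_{a,i}(X)$ is stationary in $i$. Because $a^{2*}$ has order $2M > 2D$, all polynomial terms in $V$ of degree up to $2D$ (in particular $V^{(2D)}(0)h^{2D}/(2D)!$, and the lower-order Taylor coefficients, which exist by $\left(\mathcal{H}_{0}\right)$--$\left(\mathcal{H}_{1}\right)$) are annihilated by $\Delta_{a^{2*},0}$. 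Using the Taylor expansion with integral remainder at order $2D$ and substituting \eqref{e:cov}, the surviving contribution of the main term $C(-1)^D|h|^s$ is exactly $C(-1)^D \delta_n^{2D+s} R(0,1,2D,|\cdot|^s,a^{2*})$ (after rescaling $h = (i+j\eta)\delta_n$ and factoring out $\delta_n^{2D+s}$, as in the definition \eqref{def:R}), while the remainder $r$, being $o(|h|^s)$ near $0$, contributes $o(\delta_n^{2D+s})$. Multiplying by $n'= n(1+o(1))$ gives \eqref{eq:esp_van_Dqqe}. The positivity of $(-1)^D R(0,1,2D,|\cdot|^s,a^{2*})$ follows from Proposition \ref{prop:sumneq0}: indeed $R(0,1,0,|\cdot|^s,a^{2*}) = -\sum_j a_j^{2*}|j|^s$ when $D=0$, and more generally $R(0,1,2D,|\cdot|^s,a^{2*})$ is, up to a positive combinatorial constant, $-\sum_j a_j^{2*}|j|^{2D+s}$ times $(-1)^{?}$; one matches signs with \eqref{eq:sumneq0} to conclude.

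For the variance, since $\mathbf{\Delta_a}(X)$ is a centered Gaussian vector, $\Var(V_{a,n}) = 2\,\Tr(\Sigma_a^2) = 2\sum_{i,i'=1}^{n'} \bigl(\E[\Delta_{a,i}(X)\Delta_{a,i'}(X)]\bigr)^2 = 2\sum_{i,i'} \bigl(\Delta_{a^{2*},i-i'}(V)\bigr)^2$. Writing $\rho_n(k) = -\Delta_{a^{2*},k}(V)$ and using stationarity in the index, this is $2\sum_{|k|<n'} (n'-|k|)\,\rho_n(k)^2$. The hypothesis $M > D + s/2 + 1/4$ (equivalently $2M > 2D+s+1/2$) guarantees, via the order-$2D$ Taylor expansion of $V$ and \eqref{e:cov} together with $\left(\mathcal{H}_{2}\right)$, that $\rho_n(k) = C\delta_n^{2D+s} R(k,1,2D,|\cdot|^s,a^{2*}) + \text{error}$, where the main term decays like $|k|^{2D+s-2M}$ as $|k|\to\infty$ (because $a^{2*}$ kills polynomials up to degree $2M-1$, each discrete difference gains one order of decay) and the error term, controlled by $\left(\mathcal{H}_{2}\right)$ on $r^{(2)}$ (or $r^{(3)}$), is summable in the required squared sense. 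The decay rate $2(2M - 2D - s) > 1$ is precisely what makes $\sum_k R^2(k,1,2D,|\cdot|^s,a^{2*})$ converge; one then passes $n' \to \infty$ by dominated convergence to replace $(n'-|k|)/n'$ by $1$ and drop the cutoff, obtaining \eqref{eq:var_van_Dqqe}. Positivity of the series is immediate since it is a sum of squares, and it is strictly positive because, e.g., $R(0,1,2D,|\cdot|^s,a^{2*}) \neq 0$ by part 1).

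The main obstacle, as usual in these quadratic-variation arguments, is the rigorous control of the error terms — both in replacing $V$ by its local expansion \eqref{e:cov} and in establishing the decay of $\rho_n(k)$ uniformly in $k$ and $n$. Two regimes must be handled separately: small $|k|$ (where $h = k\delta_n \to 0$ and $\left(\mathcal{H}_{1}\right)$--$\left(\mathcal{H}_{2}\right)$ near the origin apply) and large $|k|$ (where $h$ stays bounded away from $0$ in the infill case, but may grow in the mixed case, which is why the mixed-situation versions of $\left(\mathcal{H}_{1}\right)$--$\left(\mathcal{H}_{2}\right)$ include global bounds on $r$ and its derivatives). Splitting the sum $\sum_k$ at, say, $|k| \asymp 1/\delta_n$ and bounding each piece — using the Taylor/integral-remainder form of $R$ and the power-law bounds on $r^{(2)}$ (or $r^{(3)}$) — is the technically delicate computation I would carry out in detail; everything else is bookkeeping with \eqref{def:R} and the convolution properties of $a^{2*}$.
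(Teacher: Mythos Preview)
Your proposal is correct and follows essentially the same route as the paper: the identity \eqref{eq:prop3_aa} plus the order-$2D$ Taylor expansion with integral remainder for the mean, and the Gaussian fourth-moment identity (Lemma~\ref{lem:mehler}, your $2\,\Tr(\Sigma_a^2)$) plus the main-term/remainder decomposition for the variance, with the decay $|k|^{2(s-2(M-D))}$ giving summability exactly when $M>D+s/2+1/4$.

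One small simplification relative to what you sketch: the paper does \emph{not} split the sum at $|k|\asymp 1/\delta_n$. Instead it writes the squared covariance as $(\text{main}+\text{remainder})^2 = A_n + B_n + C_n$, shows $A_n$ gives the leading term by monotone convergence (your dominated-convergence step), and bounds $C_n$ directly: for fixed small $|i|$ one uses $r(h)=o(|h|^s)$ and dominated convergence, while for $|i|>2L$ one pushes the Taylor expansion to order $2D+d$ (with $d=2$ or $3$ according to $\left(\mathcal{H}_{2}\right)$) and uses the global bound $|r^{(d)}(h)|\leq (Const)|h|^{\beta}$, $\beta<-1/2$, to get $R^2(i,\delta_n,2D,r,a^{2*})\leq (Const)\,\delta_n^{2d+2\beta}i^{2\beta}$, summable in $i$; then $B_n$ is handled by Cauchy--Schwarz. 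This avoids having to treat the regimes $|k\delta_n|$ small vs.\ large separately, since the mixed-situation hypotheses already provide uniform bounds.
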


\begin{rmk} (i) Notice that \eqref{eq:esp_van_Dqqe} and \eqref{eq:var_van_Dqqe} imply concentration in the sense that
\[
\frac{V_{a,n}}{\E[V_{a,n}]} \underset{n \to +\infty}{\overset{L^2}{\longrightarrow}}  1.
\]

(ii) In practice, since the parameters $ D$ and $ s$ are known, it suffices to  choose $M$ such that   $M\geq D+1$ when $s<3/2$ and $M\geq D+2$ when $3/2 \leq s<2$.

(iii) The expression of the asymptotic variance appears to be complicated. Anyway, in practice, it can be easily approximated. Some explicit examples are given in Section \ref{sec:num}.
\end{rmk}



Following the same lines as in the proof of Proposition \ref{prop:Van_Dqqe} and using the identities $(a*a')_j=(a'*a)_{-j}$ and $R(i,1,2D,\abs{\cdot{}}^{s},a*a')=R(-i,1,2D,\abs{\cdot{}}^{s},a'*a)$, one may easily derive the corollary below. The proof is omitted.

\begin{cor}[Covariance of $V_{a,n}$ and $V_{a',n}$]\label{cor:Van}
Assume that $V$ satisfies $\left(\mathcal{H}_{0}\right)$, $\left(\mathcal{H}_{1}\right)$,  and $\left(\mathcal{H}_{2}\right)$. Let us consider two  sequences $a$ and $a'$ so that $M(a)\wedge M(a')>D +s/2+1/4$. 
Then,  as $n$ tends  to infinity, one has
\begin{align}\label{eq:cov_van}
\Cov(V_{a,n},V_{a',n}) =   2n C^2\delta_n^{4D+2s}\left[\sum_{i\in \Z} 
 R^2(i,1,2D,\abs{\cdot{}}^{s},a*a')\right] (1+o(1)).
\end{align}
\end{cor}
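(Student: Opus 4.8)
The plan is to reproduce, almost verbatim, the proof of part 2) of Proposition \ref{prop:Van_Dqqe}, with $a^{2*}$ replaced everywhere by $a*a'$. I would start from the Isserlis (Wick) identity: for centered jointly Gaussian variables $U,W$ one has $\Cov(U^2,W^2)=2(\E[UW])^2$. Since $X$ is centered, applying this with $U=\Delta_{a,i}(X)$ and $W=\Delta_{a',i'}(X)$ and summing over the admissible pairs $(i,i')$ gives, by \eqref{eq:prop3_aa},
\begin{align*}
\Cov(V_{a,n},V_{a',n})
&=2\sum_{i,i'}\bigl(\E[\Delta_{a,i}(X)\Delta_{a',i'}(X)]\bigr)^2 \\
&=2\sum_{i,i'}\bigl(\Delta_{a*a',\,i-i'}(V)\bigr)^2 .
\end{align*}
Reindexing by $k=i-i'$ and letting $N_n(k)$ denote the number of admissible pairs with $i-i'=k$ (so $0\leq N_n(k)\leq n$, $N_n(k)=0$ once $\abs{k}$ exceeds roughly $n$, and $N_n(k)/n\to 1$ for each fixed $k$), one obtains $\Cov(V_{a,n},V_{a',n})=2\sum_k N_n(k)\bigl(\Delta_{a*a',k}(V)\bigr)^2$.

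Next I would expand $\Delta_{a*a',k}(V)$ by Taylor's formula with integral remainder at order $2D$, exactly as in Proposition \ref{prop:Van_Dqqe}. Since $M(a)\wedge M(a')>D+s/2+1/4$ forces $M(a)+M(a')\geq 2D+2$, the sequence $a*a'$ annihilates all polynomials of degree $\leq 2D+1$; hence the low-order Taylor coefficients of $V$ disappear and, using $\sum_j(a*a')_j j^{2D}=0$ once more, so does the constant $V^{(2D)}(0)$ appearing in $\left(\mathcal{H}_{1}\right)$. Substituting $V^{(2D)}(h)=V^{(2D)}(0)+C(-1)^D\abs{h}^s+r(h)$ and using the homogeneity $R(k,\delta_n,2D,\abs{\cdot}^s,a*a')=\delta_n^{s}R(k,1,2D,\abs{\cdot}^s,a*a')$ yields
\[
\Delta_{a*a',k}(V)=-\,\delta_n^{2D+s}C(-1)^D R(k,1,2D,\abs{\cdot}^s,a*a')-\delta_n^{2D}R(k,\delta_n,2D,r,a*a').
\]
Squaring, multiplying by $N_n(k)$ and summing in $k$ produces a main term $2C^2\delta_n^{4D+2s}\sum_k N_n(k)R^2(k,1,2D,\abs{\cdot}^s,a*a')$ together with a cross term and a purely-$r$ term. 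For the main term, $R(k,1,2D,\abs{\cdot}^s,a*a')$ is a fixed multiple of the $(a*a')$-finite difference of $\abs{\cdot}^{2D+s}$, hence $O(\abs{k}^{2D+s-M(a)-M(a')})$ as $\abs{k}\to\infty$; since $M(a)+M(a')>2D+s+1/2$ the series $\sum_{k\in\Z}R^2(k,1,2D,\abs{\cdot}^s,a*a')$ is absolutely convergent, and dominated convergence ($N_n(k)/n\leq 1$, $N_n(k)/n\to1$) gives $\tfrac1n\sum_k N_n(k)R^2(k,1,2D,\abs{\cdot}^s,a*a')\to\sum_{k\in\Z}R^2(k,1,2D,\abs{\cdot}^s,a*a')$. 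This series is obviously nonnegative; it is positive because $\abs{\cdot}^{2D+s}$ is not a polynomial, so its $(a*a')$-difference does not vanish identically on $\Z$; and its symmetry under $a\leftrightarrow a'$, forced by that of $\Cov$, follows from the identities $(a*a')_j=(a'*a)_{-j}$ and $R(k,1,2D,\abs{\cdot}^s,a*a')=R(-k,1,2D,\abs{\cdot}^s,a'*a)$.

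The remaining, and only delicate, point is to show that the cross term $2C(-1)^D\delta_n^{4D+s}\sum_k N_n(k)R(k,1,2D,\abs{\cdot}^s,a*a')R(k,\delta_n,2D,r,a*a')$ and the purely-$r$ term $\delta_n^{4D}\sum_k N_n(k)R^2(k,\delta_n,2D,r,a*a')$ are both $o(n\delta_n^{4D+2s})$. This is carried out exactly as in the proof of Proposition \ref{prop:Van_Dqqe}: assumption $\left(\mathcal{H}_{2}\right)$ (the bound on $r^{(2)}$ when $s<3/2$, resp.\ on $r^{(3)}$ when $s\geq3/2$, together with the extra control of $r$ on $\abs{h}>1$ in the mixed situation) yields pointwise estimates on $R(k,\delta_n,2D,r,a*a')$ that are negligible relative to $\delta_n^{s}R(k,1,2D,\abs{\cdot}^s,a*a')$ near $k=0$ and summable with the right rate for large $k$, so that after multiplying by $N_n(k)\leq n$ and summing the total is of strictly smaller order than $n\delta_n^{4D+2s}$. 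I expect this bookkeeping --- in particular the behaviour near the singularity at $k=0$ in the infill situation and over the growing range of $k$ in the mixed situation --- to be the main obstacle; but since it is word for word the estimate already performed for $a^{2*}$ in Proposition \ref{prop:Van_Dqqe}, it transfers with $a*a'$ in place of $a^{2*}$, which is why the proof is omitted.
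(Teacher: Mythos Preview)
Your proposal is correct and follows exactly the route the paper indicates: the paper's own treatment of this corollary is the one-line remark that it is obtained ``following the same lines as in the proof of Proposition \ref{prop:Van_Dqqe} and using the identities $(a*a')_j=(a'*a)_{-j}$ and $R(i,1,2D,\abs{\cdot{}}^{s},a*a')=R(-i,1,2D,\abs{\cdot{}}^{s},a'*a)$,'' with the proof omitted. Your Isserlis identity is precisely the paper's Lemma \ref{lem:mehler}, your decay exponent $2D+s-M(a)-M(a')$ matches the paper's $s-2(M-D)$ once $2M$ is replaced by $M(a)+M(a')$, and your handling of the $r$-remainder via $\left(\mathcal{H}_{2}\right)$ is the same bookkeeping as in part 2) of Proposition \ref{prop:Van_Dqqe}.
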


\tbf{Particular case - $D=0$}:
\begin{enumerate}
\item[(i)] We choose $a$ as the first order elementary sequence ($a_0=-1$, $a_1=1$ and $M=1$). As $n$ tends to infinity, one has 
\begin{align*}
\E[V_{a,n}] & = n C \delta_n^{s} (2+o(1));\\
\Var(V_{a,n}) & = 2n C^2\delta_n^{2s}  \sum_{i\in \Z} \left(\abs{i-1}^s-2\abs{i}^s+\abs{i+1}^s\right)^2 (1+o(1)),\; s<3/2.
\end{align*}
\item[(ii)] General sequences. We choose two sequences $a$ and $a'$ so that 
$M(a)\wedge M(a')> s/2+1/4$. Then, as $n$  tends to infinity, one has
\begin{align*}
\E[V_{a,n}] & = - n C \delta_n^{s}\left[\sum_{j} a_j^{2*} \abs{j}^s\right](1+o(1));\\
\Var(V_{a,n}) & = 2n C^2\delta_n^{2s}  \sum_{i\in \Z} \left(\sum_j  a_j^{2*} \abs{i+j}^s \right)^2(1+o(1));\\
\Cov(V_{a,n},V_{a',n}) & = 2n C^2\delta_n^{2s} \left(\sum_{\abs{j}\leq L}  a*a'_j \abs{j}^s \right)^2 (1+o(1))\\
& +n C^2\delta_n^{2s} \sum_{i\in \Z^*}\left( \left(\sum_{\abs{j}\leq L}  a*a'_j \abs{i+j}^s \right)^2 + \left(\sum_{\abs{j}\leq L}  a'*a_j \abs{i+j}^s \right)^2\right) (1+o(1)).
\end{align*}
\end{enumerate}

Now we establish the  central limit theorem.

\begin{theorem}[Central limit theorem for $V_{a,n}$] \label{th:CLT_Van} Assume $\left(\mathcal{H}_{0}\right)$, $\left(\mathcal{H}_{1}\right)$ and $\left(\mathcal{H}_{2}\right)$ and $M> D+s/2+1/4$. Then $V_{a,n}$ is asymptotically normal in the sense that
\begin{align} \label{e:jma}
\frac{ V_{a,n} -\E[V_{a,n}]}{\sqrt{\Var(V_{a,n})}} \underset{n \to +\infty}{\overset{D}{\longrightarrow}}  \mathcal{N}(0,1).
\end{align}
\end{theorem}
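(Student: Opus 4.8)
The statement is a central limit theorem for the quadratic form $V_{a,n} = \|\mathbf{\Delta_a}(X)\|^2$, a sum of squares of entries of a centered Gaussian vector $\mathbf{\Delta_a}(X)$ with covariance matrix $\Sigma_a$. The plan is to use the classical Lindeberg/fourth-moment machinery for quadratic forms in Gaussian variables, in the spirit of the Breuer--Major theorem. Writing $V_{a,n} - \E[V_{a,n}] = \mathbf{Z}^\top \Sigma_a \mathbf{Z} - \Tr(\Sigma_a)$ for a standard Gaussian vector $\mathbf{Z}$, it suffices by the standard criterion to check that the normalized fourth cumulant tends to $0$, i.e.\ that
\[
\frac{\Tr(\Sigma_a^4)}{\bigl(\Tr(\Sigma_a^2)\bigr)^2} \underset{n\to\infty}{\longrightarrow} 0,
\]
since $\Var(V_{a,n}) = 2\Tr(\Sigma_a^2)$ and the fourth cumulant of $\mathbf{Z}^\top \Sigma_a \mathbf{Z}$ equals $48\,\Tr(\Sigma_a^4)$. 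Equivalently one can invoke the Fourth Moment Theorem of Nualart--Peccati (the vector $\mathbf{\Delta_a}(X)/\|\cdot\|$-rescaled squared sum lives in the second Wiener chaos), reducing asymptotic normality precisely to this trace condition.

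\textbf{Key steps.} First I would record the covariance structure: from \eqref{eq:prop3_aa}, the $(i,i')$ entry of $\Sigma_a$ is $-\Delta_{a*a,i-i'}(V)$, so $\Sigma_a$ is a symmetric Toeplitz matrix with entries $\sigma_{i-i'}$ where $\sigma_k = -\Delta_{a^{2*},k}(V)$. Second, using the Taylor expansion with integral remainder and the remainder control provided by $\left(\mathcal{H}_{1}\right)$ and $\left(\mathcal{H}_{2}\right)$ — exactly as in the proof of Proposition \ref{prop:Van_Dqqe} — I would show that for $|k|$ in the relevant range, $\sigma_k = C(-1)^D \delta_n^{2D+s} R(k,1,2D,\abs{\cdot}^s,a^{2*}) + (\text{smaller order})$, and crucially that $\sum_{k\in\Z} R^2(k,1,2D,\abs{\cdot}^s,a^{2*})$ converges; this summability is where the order condition $M > D+s/2+1/4$ enters (it forces $R(k,\cdot) = O(|k|^{2D+s-2M})$ with $2(2M-2D-s)>1$). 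Third, I would bound the traces: $\Tr(\Sigma_a^2) = \sum_{i,i'} \sigma_{i-i'}^2 \sim n\,\delta_n^{4D+2s} C^2 \sum_k R^2(k,\cdot)$, which matches $\tfrac12\Var(V_{a,n})$ from \eqref{eq:var_van_Dqqe}, while for the fourth trace I would use $\Tr(\Sigma_a^4) \le \|\Sigma_a\|_{\mathrm{op}}^2\,\Tr(\Sigma_a^2)$ together with the Gershgorin-type bound $\|\Sigma_a\|_{\mathrm{op}} \le \sum_k |\sigma_k| = O(\delta_n^{2D+s})$ (again finite by the same summability of the $R$'s, since $2D+s-2M < -1$). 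Fourth, combining these,
\[
\frac{\Tr(\Sigma_a^4)}{\bigl(\Tr(\Sigma_a^2)\bigr)^2} \le \frac{\|\Sigma_a\|_{\mathrm{op}}^2}{\Tr(\Sigma_a^2)} = O\!\left(\frac{\delta_n^{4D+2s}}{n\,\delta_n^{4D+2s}}\right) = O(1/n) \to 0,
\]
which closes the argument via the fourth-moment criterion, after noting that the normalization by $\sqrt{\Var(V_{a,n})}$ is legitimate since $\Var(V_{a,n}) > 0$ for $n$ large by Proposition \ref{prop:Van_Dqqe}.

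\textbf{Main obstacle.} The routine part is the Wiener-chaos/fourth-moment reduction; the genuine work is the uniform control of the Toeplitz entries $\sigma_k = -\Delta_{a^{2*},k}(V)$ across the whole index range $0 \le |k| \le n'$ — not just for $k$ fixed — so that both $\sum_k \sigma_k^2$ and $\sum_k |\sigma_k|$ have the claimed orders. This requires splitting into a regime of small $|k|$ (where the $|h|^s$ term dominates and the remainder $r$ is handled by $\left(\mathcal{H}_{1}\right)$), an intermediate regime, and (in the mixed situation) a regime $|k\delta_n| \gtrsim 1$ where the extra tail hypotheses on $r$ in the mixed setting are used; in each regime one differentiates the $a^{2*}$-difference against the integral-remainder form \eqref{def:R} and estimates $R^{(\cdot)}$ using $\left(\mathcal{H}_{2}\right)$. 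This is precisely the delicate estimation already carried out for the two first moments in Proposition \ref{prop:Van_Dqqe}, so I would invoke those bounds rather than redo them, and the CLT then follows with little extra effort.
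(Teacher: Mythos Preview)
Your approach is essentially the same as the paper's, phrased through the fourth-moment/Nualart--Peccati criterion rather than through Lindeberg after diagonalization; both reduce to bounding $\|\Sigma_a\|_{\mathrm{op}}$ by the maximal absolute row sum of $\Sigma_a$ and comparing it to $\Tr(\Sigma_a^2)^{1/2}=\sqrt{\Var(V_{a,n})/2}$.

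There is, however, a quantitative slip in your operator-norm step. You assert $\sum_k|\sigma_k|=O(\delta_n^{2D+s})$ on the grounds that ``$2D+s-2M<-1$'', but the hypothesis $M>D+s/2+1/4$ only yields $2D+s-2M<-1/2$. This matters: take $M=D+1$ and $1\le s<3/2$ (which is allowed); then $2D+s-2M=s-2\in[-1,-1/2)$ and the series $\sum_k|R(k,1,2D,|\cdot|^s,a^{2*})|\asymp\sum_k|k|^{s-2}$ diverges. The paper's Lemma (which also absorbs the remainder $r$ via $(\mathcal{H}_2)$) gives instead the weaker but sufficient bound
\[
\max_i\sum_{i'}|\Sigma_a(i,i')|\leq(Const)\,\delta_n^{2D+s}\bigl(n^{\,s-2(M-D)+1}+1\bigr)+(Const)\,\delta_n^{2D+d+\beta}\bigl(n^{1+\beta}+1\bigr)=o\bigl(n^{1/2}\delta_n^{2D+s}\bigr),
\]
using $s-2(M-D)+1<1/2$ and $\beta<-1/2$. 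With this correction your argument still closes: $\|\Sigma_a\|_{\mathrm{op}}^2/\Tr(\Sigma_a^2)=o(n\delta_n^{4D+2s})/(n\delta_n^{4D+2s})=o(1)$, and the fourth-moment criterion applies. So the structure of your proof is correct and matches the paper's; only the summability claim for $\sum_k|\sigma_k|$ must be downgraded from $O(1)$ to $o(n^{1/2})$.
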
 \medskip

\begin{rmk}\label{rem:cas_pourris}
\leavevmode
\begin{itemize}
\item If $M=D+1$, the condition $M>D+s/2+1/4$ in Proposition \ref{prop:Van_Dqqe} implies $s<3/2$. However, when $M = D+1$ and $s\geq 3/2$, it is still possible to  compute the variance but the convergence is slower and the central limit theorem does not hold anymore. More precisely, we have the following.
 \begin{itemize}
  \item If $s>3/2$  and $M = D+1$ then,  as $n$ tends to infinity,
  \begin{align}\label{eq:var_van_dv_Dqqe}
\Var(V_{a,n})  = (Const)\times \delta_n^{4D+2s} \times n^{2s-4(M-D)+2} \times (1+o(1)).
\end{align}

 \item If $s=3/2$ and $M = D+1$ then, as  $n$ tends to infinity
 \begin{align}\label{eq:var_van_dv_pb_Dqqe}
\Var(V_{a,n})= (Const)\times  \delta_n^{4D+2s} \times n\log n \times(1+o(1)).  
\end{align}
\end{itemize}
We omit the proof. Analogous formula for the covariance of two variations can be derived similarly. 
\item Since the work of  Guyon and Le\' on  \cite{GL89}, it is a well known fact that  in the simplest  case ($D=0,L=2,M=1)$ and in the infill situation ($\delta_n=1/n$, $\alpha =1$),  the  central limit theorem holds true for quadratic variations if and only if $s<3/2$. Hence assumption $M>D+s/2+1/4$ is minimal.
\end{itemize}
\end{rmk}

%

\begin{cor}[Joint central limit theorem] \label{cor:CLT_Van_joint}
Assume that $V$ satisfies $\left(\mathcal{H}_{0}\right)$, $\left(\mathcal{H}_{1}\right)$ and $\left(\mathcal{H}_{2}\right)$. Let $a^{(1)},\dots,a^{(k)}$  be $k$ sequences with order greater than $D+s/2+1/4$.
  Assume also that, as $n \to \infty$, the $k \times k$ matrix with term $i,j$ equal to
 \[
 \frac{1}{n \delta_n^{4D+2s}}
 \Cov \left( V_{a^{(i)},n} , V_{a^{(j)},n} \right)
 \]
converges to an invertible matrix $\Lambda_{\infty}$. Then, $V_{a^{(1)},\dots,a^{(k)},n} = ( V_{a^{(1)},n},\dots,V_{a^{(k)},n} )^\top$ is asymptotically normal in the sense that  $n \to \infty$
\[
\frac {V_{a^{(1)},\dots,a^{(k)},n} -
\mathbb{E}
\left[
V_{a^{(1)},\dots,a^{(k)},n}
\right] }
{n^{1/2}\delta_n^{2D+s}}
\underset{n \to +\infty}{\overset{D}{\longrightarrow}} 
\mathcal{N}( 0 , \Lambda_\infty).
\]
\end{cor}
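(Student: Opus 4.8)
The plan is to reduce the joint statement to the one-dimensional Theorem \ref{th:CLT_Van} by the Cramér--Wold device. Fix an arbitrary vector $\lambda = (\lambda_1,\dots,\lambda_k)^\top \in \R^k \setminus \{0\}$ and consider the scalar random variable $S_n = \sum_{\ell=1}^k \lambda_\ell \bigl(V_{a^{(\ell)},n} - \E[V_{a^{(\ell)},n}]\bigr)$. It suffices to show that $S_n / (n^{1/2}\delta_n^{2D+s})$ converges in distribution to $\mathcal{N}(0,\lambda^\top \Lambda_\infty \lambda)$. The key structural observation is that $S_n$ is itself (up to centering) a quadratic form in the Gaussian vector obtained by stacking the vectors $\mathbf{\Delta}_{a^{(\ell)}}(X)$, $\ell = 1,\dots,k$; equivalently, writing $Y = \bigl( \Delta_{a^{(1)}}(X)^\top, \dots, \Delta_{a^{(k)}}(X)^\top \bigr)^\top$, which is a centered Gaussian vector with some covariance matrix $\Gamma_n$, we have $S_n = Y^\top B_n Y - \E[Y^\top B_n Y]$ for a fixed symmetric block-diagonal matrix $B_n$ with blocks $\lambda_\ell I_{n'}$. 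This is exactly the type of object handled in the proof of Theorem \ref{th:CLT_Van}: a centered quadratic form in a Gaussian vector whose increments have the covariance structure dictated by $\left(\mathcal{H}_{1}\right)$--$\left(\mathcal{H}_{2}\right)$ via identity \eqref{eq:prop3_aa}.

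Concretely, I would proceed in the following steps. First, compute $\Var(S_n)$ by bilinearity from Proposition \ref{prop:Van_Dqqe} and Corollary \ref{cor:Van}: one gets $\Var(S_n) = \sum_{i,j} \lambda_i \lambda_j \Cov(V_{a^{(i)},n},V_{a^{(j)},n}) = n \delta_n^{4D+2s} \bigl( \lambda^\top \Lambda_\infty \lambda \bigr)(1+o(1))$, using the hypothesis that the rescaled covariance matrix converges to $\Lambda_\infty$; invertibility of $\Lambda_\infty$ guarantees $\lambda^\top \Lambda_\infty \lambda > 0$ so the limiting variance is nondegenerate. Second, I would invoke the same moment/cumulant machinery that underlies Theorem \ref{th:CLT_Van} — either the fourth-moment (Nualart--Peccati) criterion for sequences in a fixed Wiener chaos, or the Breuer--Major type estimates on traces $\Tr\bigl((\Gamma_n B_n)^p\bigr)$ — to show that the normalized fourth cumulant of $S_n$ tends to zero, which yields asymptotic normality of $S_n / \sqrt{\Var(S_n)}$. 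The bounds needed here are precisely those already established for each individual $V_{a^{(\ell)},n}$ together with the cross terms, all of which reduce to summability of squares of the quantities $R(i,1,2D,|\cdot|^s, a^{(i)}*a^{(j)})$ over $i \in \Z$, shown finite in Proposition \ref{prop:Van_Dqqe} and Corollary \ref{cor:Van}. Third, combining the variance asymptotics with the CLT for $S_n/\sqrt{\Var(S_n)}$ and Slutsky's lemma gives $S_n/(n^{1/2}\delta_n^{2D+s}) \to \mathcal{N}(0,\lambda^\top\Lambda_\infty\lambda)$, and since $\lambda$ was arbitrary, Cramér--Wold delivers the multivariate convergence with covariance $\Lambda_\infty$.

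The main obstacle is not conceptual but bookkeeping: one must verify that the chaos-decomposition / fourth-moment estimates proved for a single quadratic $a$-variation pass through verbatim for an arbitrary linear combination $S_n$ of variations attached to \emph{different} sequences $a^{(\ell)}$, including the mixed cross-covariance contributions. This is where the condition that every $a^{(\ell)}$ has order greater than $D + s/2 + 1/4$ is used uniformly, so that all the relevant series $\sum_{i \in \Z} R^2(i,1,2D,|\cdot|^s, a^{(i)} * a^{(j)})$ converge; the block structure of $B_n$ makes the trace estimates a finite sum of terms each of the form already controlled in Theorem \ref{th:CLT_Van}, so no genuinely new analytic difficulty arises. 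One should also note that, since the proof only combines $L^2$ convergence of the rescaled variance matrix with the chaos CLT, the same argument works simultaneously in the infill and mixed situations, the regime only entering through the already-established per-sequence estimates.
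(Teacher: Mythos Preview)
Your overall plan via Cramér--Wold is exactly the paper's, and your variance computation in the first step matches the paper's use of the assumed convergence of the rescaled covariance matrix to $\Lambda_\infty$ (with invertibility ensuring $\lambda^\top\Lambda_\infty\lambda>0$). The divergence is in how the scalar CLT for $S_n$ is obtained.

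You propose to invoke ``the same moment/cumulant machinery that underlies Theorem~\ref{th:CLT_Van}'', naming the Nualart--Peccati fourth-moment criterion or Breuer--Major trace estimates. That is not what the paper does. The proof of Theorem~\ref{th:CLT_Van} is more elementary: it writes $V_{a,n}$ as $\sum_i \lambda_i Z_i^2$ with $\lambda_i$ the eigenvalues of $\Sigma_a$, and checks the Lindeberg condition $\max_i|\lambda_i|=o(\Var(V_{a,n})^{1/2})$ via the row-sum bound of Lemma~\ref{lem:cov} together with the inequality $\max_i|\lambda_i|\le\max_i\sum_j|\Sigma_a(i,j)|$. For the corollary, the paper reuses this verbatim: the linear combination $LC(\gamma)=\sum_j\gamma_j V_{a^{(j)},n}$ is again a quadratic form $\sum_i\lambda_i Z_i^2$, now with $\lambda_i$ the eigenvalues of $\sigma'=\sum_j\gamma_j\Sigma_{a^{(j)},n}$, and one simply bounds
\[
\max_i|\lambda_i|=\|\sigma'\|_{op}\le\sum_{j=1}^k|\gamma_j|\,\|\Sigma_{a^{(j)},n}\|_{op}=o\bigl(n^{1/2}\delta_n^{2D+s}\bigr)
\]
by the triangle inequality for the operator norm and the bound already established in the proof of Theorem~\ref{th:CLT_Van} for each individual $\Sigma_{a^{(j)},n}$. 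No cross-term analysis, no trace computations, no fourth-cumulant estimate is needed.

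Your route would also work---quadratic forms in Gaussians lie in the second chaos, and the fourth-moment theorem applies---but it is heavier: you would need to control $\mathrm{Tr}\bigl((\Gamma_nB_n)^4\bigr)$, which expands into mixed terms involving all pairs $a^{(i)}*a^{(j)}$, whereas the paper's operator-norm argument bypasses the cross terms entirely via the triangle inequality. What your approach buys is generality (it would extend to settings where the Lindeberg eigenvalue condition is harder to verify directly); what the paper's buys is brevity and the avoidance of any new analytic input beyond what was already proved for a single sequence.
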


\subsection{Estimators of C based on the quadratic a-variations}\label{ssec:Can}

Guided by the moment method, we define
\begin{align}\label{eq:Can}
C_{a,n} := \frac{ V_{a,n}}{ n (-1)^D \delta_n^{2D+s} R(0,1,2D,\abs{\cdot}^s,a^{2*}) }.
\end{align}
Then $C_{a,n}$ is an estimator of $C$ which is asymptotically unbiased
by Proposition \ref{prop:Van_Dqqe}. Now our aim is to establish its asymptotic behavior.

\begin{theorem}[Central limit theorem for $C_{a,n}$] \label{th:CLT_Can}
Assume $\left(\mathcal{H}_{0}\right)$  to $\left(\mathcal{H}_{3}\right)$ and that  $M(a) > D +s/2 + 1/4$. 
Then
$C_{a,n}$ is asymptotically normal. More precisely, we have
\begin{align}\label{eq:CLT_Can}
\frac{C_{a,n} -C}{\sqrt{\Var(C_{a,n})}} \underset{n \to +\infty}{\overset{D}{\longrightarrow}}  \mathcal{N}(0,1),
\end{align}
with
$\Var(C_{a,n})=(Const) n^{-1} (1+o(1))$.
\end{theorem}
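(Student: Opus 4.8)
The plan is to derive the central limit theorem for $C_{a,n}$ from the already-established central limit theorem for $V_{a,n}$ (Theorem \ref{th:CLT_Van}), handling the fact that $C_{a,n}$ is, up to the deterministic normalizing constant in \eqref{eq:Can}, a rescaled copy of $V_{a,n}$, but whose expectation is only asymptotically equal to $C$ rather than exactly equal. First I would write
\[
\frac{C_{a,n}-C}{\sqrt{\Var(C_{a,n})}}
= \frac{C_{a,n}-\E[C_{a,n}]}{\sqrt{\Var(C_{a,n})}}
 + \frac{\E[C_{a,n}]-C}{\sqrt{\Var(C_{a,n})}}.
\]
The first term converges in distribution to $\mathcal N(0,1)$: indeed $C_{a,n}$ is a fixed positive multiple of $V_{a,n}$, so $(C_{a,n}-\E[C_{a,n}])/\sqrt{\Var(C_{a,n})} = (V_{a,n}-\E[V_{a,n}])/\sqrt{\Var(V_{a,n})}$, which is exactly the quantity shown to be asymptotically standard normal in Theorem \ref{th:CLT_Van} (whose hypotheses $\left(\mathcal H_0\right)$, $\left(\mathcal H_1\right)$, $\left(\mathcal H_2\right)$ and $M>D+s/2+1/4$ are all implied by the assumptions here). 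By Slutsky's lemma it then suffices to prove that the deterministic bias term tends to $0$.

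Next I would pin down the orders of magnitude. From Proposition \ref{prop:Van_Dqqe}, part 2), together with the normalization in \eqref{eq:Can}, one gets
\[
\Var(C_{a,n}) = \frac{\Var(V_{a,n})}{\big(n(-1)^D\delta_n^{2D+s}R(0,1,2D,\abs{\cdot}^s,a^{2*})\big)^2}
= \frac{2 C^2 \sum_{i\in\Z} R^2(i,1,2D,\abs{\cdot}^s,a^{2*})}{n\, R^2(0,1,2D,\abs{\cdot}^s,a^{2*})}(1+o(1)),
\]
so $\Var(C_{a,n}) = (Const)\,n^{-1}(1+o(1))$ with a strictly positive constant, which already establishes the claimed order of the variance; in particular $\sqrt{\Var(C_{a,n})}\asymp n^{-1/2}$. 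It remains to show $\E[C_{a,n}]-C = o(n^{-1/2})$, equivalently $\E[V_{a,n}] = nC(-1)^D\delta_n^{2D+s}R(0,1,2D,\abs{\cdot}^s,a^{2*})\,(1+o(n^{-1/2}))$, i.e.\ that the relative error in the expectation expansion \eqref{eq:esp_van_Dqqe} is $o(n^{-1/2})$. The rough expansion in Proposition \ref{prop:Van_Dqqe} only gives $(1+o(1))$, so this is the step where the extra assumption $\left(\mathcal H_3\right)$ — absent from Theorem \ref{th:CLT_Van} and Proposition \ref{prop:Van_Dqqe} — must be used, and it is the main obstacle.

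To carry this out I would revisit the exact expression for $\E[V_{a,n}]$. Using \eqref{eq:prop3_aa} one has $\E[V_{a,n}] = -\sum_{i=1}^{n'}\Delta_{a^{2*},0}(V)$ evaluated along the array, i.e.\ $\E[V_{a,n}] = -n'\Delta_{a^{2*},0}(V)$ since the diagonal terms are all equal; because $a^{2*}$ has order $2M>2D$, only the $C(-1)^D\abs{h}^s$ term and the remainder $r$ in $\left(\mathcal H_1\right)$ contribute after the $2D$ vanishing moments are accounted for, and a Taylor expansion with integral remainder (the operator $R$ in \eqref{def:R}) isolates the leading term $nC(-1)^D\delta_n^{2D+s}R(0,1,2D,\abs{\cdot}^s,a^{2*})$. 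The residual pieces are: (a) the difference $n'-n = -(L-1)$, contributing a relative error $O(1/n) = o(n^{-1/2})$; and (b) the contribution of $r$, which is of the form $\sum_j a^{2*}_j\, r(j\delta_n)$-type terms (more precisely the $R(0,\delta_n,2D,r,a^{2*})$ correction), bounded using $\left(\mathcal H_3\right)$: in the infill case $\abs{r(h)}=o(\abs h^{s+1/2})$ gives this correction of relative order $o(\delta_n^{1/2}) = o(n^{-1/2})$ since $\delta_n=1/n$, and in the mixed case $\abs{r(h)}=o(\abs h^{s+1/(2\alpha)})$ with $\delta_n=n^{-\alpha}$ gives relative order $o(\delta_n^{1/(2\alpha)}) = o(n^{-1/2})$; here one must also use the growth control on $r$ at infinity from the mixed-situation version of $\left(\mathcal H_1\right)$ to sum the off-diagonal and large-lag contributions. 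Combining, $\E[C_{a,n}]-C = o(n^{-1/2})$, the bias term in the decomposition above vanishes, and Slutsky's lemma finishes the proof; the explicit constant in $\Var(C_{a,n})=(Const)n^{-1}(1+o(1))$ is the one displayed above.
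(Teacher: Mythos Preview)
Your proposal is correct and follows essentially the same approach as the paper: the same decomposition into a centered term (identified with $(V_{a,n}-\E[V_{a,n}])/\sqrt{\Var(V_{a,n})}$ and handled by Theorem \ref{th:CLT_Van}) plus a deterministic bias term shown to be $o(1)$ via $\left(\mathcal H_3\right)$, followed by Slutsky. You actually provide more detail than the paper does, notably the $n'$ versus $n$ discrepancy and the explicit asymptotic variance constant; one small superfluous remark is the mention of ``off-diagonal and large-lag contributions'' and growth of $r$ at infinity, which do not arise in the bias term since $R(0,\delta_n,2D,r,a^{2*})$ involves only $r(j\eta\delta_n)$ with $|j|\le L-1$, hence only small arguments.
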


The following corollary is of particular interest: it will give theoretical results when one aggregates the information of different quadratic $a$-variations with different orders. As one can see numerically in Section \ref{ssec:simu_aggregation}, such a procedure appears to be really promising and circumvents the problem of the determination of the optimal sequence $a$.

\begin{cor}\label{cor:CLT_Can_joint}
Under the assumptions of Theorem \ref{th:CLT_Can}, consider $k$ sequences $a^{(1)},\ldots,a^{(k)}$ so that, for $i=1,\ldots,k$, $M(a^{(i)}) > D +s/2 + 1/4$. Assume furthermore that the covariance matrix of\\ $( C_{a^{(i)},n} /  \Var(C_{a^{(i)},n})^{1/2} )_{i=1,\ldots,k}$ converges to an invertible matrix $\Gamma_{\infty}$ as $n \to \infty$. Then, $( [C_{a^{(i)},n} - C] / \Var(C_{a^{(i)},n})^{1/2} )_{i=1,\ldots,k}$ converges in distribution to the $\mathcal{N}( 0 , \Gamma_{\infty})$ distribution.
\end{cor}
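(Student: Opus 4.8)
The plan is to reduce Corollary \ref{cor:CLT_Can_joint} to the joint central limit theorem for the quadratic $a$-variations themselves, namely Corollary \ref{cor:CLT_Van_joint}, via the Cram\'er--Wold device and linearity. Indeed, each $C_{a^{(i)},n}$ is by definition \eqref{eq:Can} a deterministic scalar multiple of $V_{a^{(i)},n}$, so the vector $(C_{a^{(i)},n})_{i=1,\ldots,k}$ is an invertible linear (diagonal) transform of $(V_{a^{(i)},n})_{i=1,\ldots,k}$. Consequently the normalized and centered vector $W_n \defeq ( [C_{a^{(i)},n} - C] / \Var(C_{a^{(i)},n})^{1/2} )_{i=1,\ldots,k}$ has covariance matrix $\Gamma_n$ (the correlation matrix of the $C_{a^{(i)},n}$), and it is enough to show that for every fixed $\lambda \in \R^k$ the scalar $\lambda^\top W_n$ is asymptotically $\mathcal{N}(0, \lambda^\top \Gamma_\infty \lambda)$.

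First I would fix $\lambda$ and write $\lambda^\top W_n = \sum_{i=1}^k \lambda_i ( C_{a^{(i)},n} - C ) / \Var(C_{a^{(i)},n})^{1/2}$. Using $\Var(C_{a^{(i)},n}) = (Const)_i \, n^{-1}(1+o(1))$ from Theorem \ref{th:CLT_Can} and the definition \eqref{eq:Can}, this rewrites, up to an asymptotically constant factor and an $n^{1/2}\delta_n^{2D+s}$ normalization, as a fixed linear combination of the centered variations $V_{a^{(i)},n} - \E[V_{a^{(i)},n}]$ --- that is, as $\mu^\top \big( (V_{a^{(i)},n} - \E[V_{a^{(i)},n}])/(n^{1/2}\delta_n^{2D+s}) \big)_{i}$ for a suitable fixed vector $\mu$ depending on $\lambda$, on the $R(0,1,2D,\abs{\cdot}^s,(a^{(i)})^{2*})$ constants, and on the variance constants. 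Here I need the asymptotic expansions \eqref{eq:esp_van_Dqqe} to control the centering: $C_{a^{(i)},n} - C = (C_{a^{(i)},n} - \E[C_{a^{(i)},n}]) + (\E[C_{a^{(i)},n}] - C)$, and the bias term $\E[C_{a^{(i)},n}] - C$ must be shown to be $o(\Var(C_{a^{(i)},n})^{1/2}) = o(n^{-1/2})$; this is precisely where assumption $\left(\mathcal{H}_{3}\right)$ enters, and it is already the content (or an immediate consequence of the proof) of Theorem \ref{th:CLT_Can}, so I would invoke that rather than redo it.

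Next I would apply Corollary \ref{cor:CLT_Van_joint}: since each $a^{(i)}$ has order $> D + s/2 + 1/4$, and since the rescaled covariance matrix $\frac{1}{n\delta_n^{4D+2s}} \Cov(V_{a^{(i)},n}, V_{a^{(j)},n})$ converges to some matrix $\Lambda_\infty$ (its invertibility follows from the assumed invertibility of $\Gamma_\infty$, because $\Lambda_\infty$ and $\Gamma_\infty$ differ only by conjugation with an invertible diagonal matrix of the limiting constants from \eqref{eq:Can} and Theorem \ref{th:CLT_Can}), Corollary \ref{cor:CLT_Van_joint} gives that $\big( (V_{a^{(i)},n} - \E[V_{a^{(i)},n}])/(n^{1/2}\delta_n^{2D+s}) \big)_i \overset{D}{\to} \mathcal{N}(0,\Lambda_\infty)$. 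Composing with the fixed linear map $\mu^\top(\cdot)$ and tracking the constants, the limit of $\lambda^\top W_n$ is centered Gaussian with variance $\lambda^\top \Gamma_\infty \lambda$ (using $\Gamma_n \to \Gamma_\infty$ to identify the limiting variance). Since $\lambda$ was arbitrary, the Cram\'er--Wold device yields $W_n \overset{D}{\to} \mathcal{N}(0,\Gamma_\infty)$, which is the claim.

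The only genuinely delicate point is bookkeeping: making sure the "$(1+o(1))$" factors in \eqref{eq:esp_van_Dqqe}, \eqref{eq:var_van_Dqqe}, \eqref{eq:cov_van}, and in $\Var(C_{a^{(i)},n})$ are uniform enough (they are, since $k$ and the sequences are fixed) and that the diagonal rescaling matrix relating $(V_{a^{(i)},n})$ to $(C_{a^{(i)},n})$ indeed converges to an invertible limit so that invertibility of $\Gamma_\infty$ transfers to $\Lambda_\infty$ and back. The main obstacle, such as it is, is simply verifying that the bias $\E[C_{a^{(i)},n}] - C$ is negligible at the $n^{-1/2}$ scale; but since the hypotheses of Theorem \ref{th:CLT_Can} are assumed, this has already been handled in its proof and can be cited. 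Everything else is linearity plus the Cram\'er--Wold device applied to the already-established Corollary \ref{cor:CLT_Van_joint}.
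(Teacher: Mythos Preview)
The paper does not supply an explicit proof of this corollary; it is left implicit after the proofs of Theorem~\ref{th:CLT_Can} and Corollary~\ref{cor:CLT_Van_joint}. Your argument is correct and is exactly the natural one the paper's framework points to: decompose each coordinate as in the proof of Theorem~\ref{th:CLT_Can} into a centered-$V$ term plus a bias term, kill the bias using $\left(\mathcal{H}_3\right)$ (already done there), and then invoke Corollary~\ref{cor:CLT_Van_joint} for the joint Gaussian limit of the centered-$V$ vector, transferring invertibility between $\Gamma_\infty$ and $\Lambda_\infty$ via the diagonal rescaling. Your bookkeeping remarks (that $\Gamma_\infty$ and $\Lambda_\infty$ differ by conjugation with an invertible diagonal matrix whose entries come from Proposition~\ref{prop:Van_Dqqe}, and that the $o(1)$ factors are harmless because $k$ and the sequences are fixed) are accurate. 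In short, your proposal is correct and matches what the paper would do had it written the proof out.
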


\subsection{Adding a drift} \label{s:drift}

In this section, we do not assume anymore that the process $X$ is centered  and we set  for $t \geq 0$,
\[
f(t) = \E [X(t)].
\]
We write $\overline X$ the corresponding centered process: $\overline X (t)=X(t) - f(t)$.
As it is always the case in statistical applications, we assume  that $f$ is a $C^{\infty}$ function. We emphasize on the fact that our purpose is not proposing an estimation of the mean function.

\begin{cor} \label{cor:cr}
Assume the same assumptions as in Theorem \ref{th:CLT_Can}, and recall that $\delta_n = n^{-\alpha}$ for $\alpha \in (0,1]$.
Define 
\[
K^\alpha_{M,n}  = \sup_{t \in [0, n^{1-\alpha}] } |f^{(M)}(t)|.
\]
 and if we assume in addition that
\begin{align}\label{eq:cond_K}
K^\alpha_{M,n} = o( n^{-1/4} \delta_n^{D-M+s/2}),
\end{align}
 then \eqref{eq:CLT_Can} still holds for $X$.
\end{cor}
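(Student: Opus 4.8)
The plan is to reduce the non-centered case to the centered one already treated in Theorem \ref{th:CLT_Can}, by showing that replacing $X$ with $X = \overline X + f$ only adds a deterministic term to $\Delta_{a,i}(X)$ that is negligible once we normalize. First I would decompose, using linearity of the discrete $a$-difference,
\[
\Delta_{a,i}(X) = \Delta_{a,i}(\overline X) + \Delta_{a,i}(f),
\]
so that
\[
V_{a,n} = \|\mathbf{\Delta_a}(X)\|^2 = \|\mathbf{\Delta_a}(\overline X)\|^2 + 2\sum_{i=1}^{n'}\Delta_{a,i}(\overline X)\Delta_{a,i}(f) + \sum_{i=1}^{n'}\big(\Delta_{a,i}(f)\big)^2.
\]
Writing $C_{a,n}$ and $\overline C_{a,n}$ for the estimators built from $V_{a,n}$ and from $\overline V_{a,n} := \|\mathbf{\Delta_a}(\overline X)\|^2$ respectively (same normalizing constant), we get
\[
C_{a,n} - \overline C_{a,n} = \frac{2\sum_i \Delta_{a,i}(\overline X)\Delta_{a,i}(f) + \sum_i \big(\Delta_{a,i}(f)\big)^2}{n(-1)^D\delta_n^{2D+s}R(0,1,2D,\abs{\cdot}^s,a^{2*})}.
\]
Since Theorem \ref{th:CLT_Can} gives $(\overline C_{a,n}-C)/\sqrt{\Var(\overline C_{a,n})}\to\mathcal N(0,1)$ with $\Var(\overline C_{a,n}) = (Const)\,n^{-1}(1+o(1))$, by Slutsky it suffices to show that $C_{a,n}-\overline C_{a,n} = o_{\P}(n^{-1/2})$.

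The next step is to bound $\Delta_{a,i}(f)$. Since $f$ is $C^\infty$ and the sequence $a$ has zero sum and vanishing moments up to order $M-1$, a Taylor expansion of $f$ at the point $i\delta_n$ to order $M$ with integral remainder — exactly the mechanism encoded in \eqref{def:R} — gives
\[
\Delta_{a,i}(f) = \delta_n^M R(i,\delta_n,M,f,a), \qquad |\Delta_{a,i}(f)| \le (Const)\,\delta_n^M \sup_{t\in[i\delta_n,(i+L-1)\delta_n]}|f^{(M)}(t)| \le (Const)\,\delta_n^M K^\alpha_{M,n},
\]
because all evaluation points $(i+j\eta)\delta_n$ lie in $[0,n\delta_n] = [0,n^{1-\alpha}]$ for $i\le n'$, $0\le j\le L-1$, $\eta\in[0,1]$. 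Hence the purely deterministic term satisfies
\[
\Big|\sum_{i=1}^{n'}\big(\Delta_{a,i}(f)\big)^2\Big| \le (Const)\, n\,\delta_n^{2M}(K^\alpha_{M,n})^2,
\]
and after dividing by $n\delta_n^{2D+s}$ this contributes $(Const)\,\delta_n^{2(M-D)-s}(K^\alpha_{M,n})^2 = \big((Const)\,\delta_n^{M-D-s/2}K^\alpha_{M,n}\big)^2$, which is $o(n^{-1/2})$ — in fact $o(n^{-1})$ — precisely under \eqref{eq:cond_K}.

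For the cross term I would use Cauchy--Schwarz in a conditional-on-nothing, purely deterministic-weighting way: write $T_n := \sum_{i=1}^{n'}\Delta_{a,i}(\overline X)\Delta_{a,i}(f)$. Since $\E[T_n]=0$,
\[
\Var(T_n) = \sum_{i,i'}\Delta_{a,i}(f)\Delta_{a,i'}(f)\,\E[\Delta_{a,i}(\overline X)\Delta_{a,i'}(\overline X)] = -\sum_{i,i'}\Delta_{a,i}(f)\Delta_{a,i'}(f)\,\Delta_{a*a,i-i'}(V),
\]
using \eqref{eq:prop3_aa}. Bounding $|\Delta_{a,i}(f)|\le (Const)\delta_n^M K^\alpha_{M,n}$ and using that $\sum_{j\in\Z}|\Delta_{a*a,j}(V)| = \delta_n^{2D+s}\sum_{j}|R(j,1,2D,\abs{\cdot}^s,a^{2*})|\,(1+o(1))\cdot$ (a finite, convergent sum by the summability established inside the proof of Proposition \ref{prop:Van_Dqqe}, using $(\mathcal H_2)$ and $M>D+s/2+1/4$, together with the factor $C$), one gets
\[
\Var(T_n) \le (Const)\, n\,\delta_n^{2M}(K^\alpha_{M,n})^2\,\delta_n^{2D+s}.
\]
Therefore, by Chebyshev, $T_n / (n\delta_n^{2D+s}) = O_{\P}\big( n^{-1/2}\delta_n^{M-D-s/2}K^\alpha_{M,n}\big) = o_{\P}(n^{-1/2})$ under \eqref{eq:cond_K}. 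Combining the two bounds yields $C_{a,n} - \overline C_{a,n} = o_{\P}(n^{-1/2}) = o_{\P}(\sqrt{\Var(\overline C_{a,n})})$, and Slutsky's lemma finishes the proof.

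The main obstacle is purely bookkeeping rather than conceptual: one must check carefully that all the arguments of $f^{(M)}$ that appear in the integral remainder indeed stay inside $[0,n^{1-\alpha}]$ so that the single supremum $K^\alpha_{M,n}$ controls every $\Delta_{a,i}(f)$ uniformly in $i$, and one must invoke the summability of $\sum_j |R(j,1,2D,\abs{\cdot}^s,a^{2*})|$ — which is exactly what the proof of Proposition \ref{prop:Van_Dqqe} establishes under $(\mathcal H_2)$ and $M>D+s/2+1/4$ — to control the covariance sum in $\Var(T_n)$. Once these two points are in place, the condition \eqref{eq:cond_K} is exactly the scaling needed to make both the cross term and the deterministic term negligible compared with the $n^{-1/2}$ fluctuations coming from Theorem \ref{th:CLT_Can}.
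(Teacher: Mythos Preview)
Your overall strategy is correct and is in fact sharper than the paper's. The paper bounds the perturbation by the crude inequality $\bigl|\,\|A+B\|^2-\|A\|^2\,\bigr|\le\|B\|^2+2\|A\|\,\|B\|$ with $A=\mathbf{\Delta_a}(\overline X)$ and $B=\mathbf{\Delta_a}(f)$, and then only checks that $\|B\|^2=o\bigl(\Var(V_{a,n}^{\overline X})^{1/2}\bigr)$, which is exactly \eqref{eq:cond_K}. You instead isolate the cross term $T_n=\langle A,B\rangle$ and control its \emph{variance}, exploiting $\E[T_n]=0$. This buys a full factor $n^{1/4}$: the Cauchy--Schwarz bound $|T_n|\le\|A\|\,\|B\|$ with $\|A\|=O_{\P}(n^{1/2}\delta_n^{D+s/2})$ yields only $T_n/(n\delta_n^{2D+s})=o_{\P}(n^{-1/4})$ under \eqref{eq:cond_K}, whereas your variance argument delivers the $o_{\P}(n^{-1/2})$ actually needed for Slutsky. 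So your route is the one that matches the stated hypothesis.

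There is, however, one technical slip in your variance bound. You claim that $\sum_{j\in\Z}|R(j,1,2D,\abs{\cdot}^s,a^{2*})|<\infty$ and attribute this to the proof of Proposition~\ref{prop:Van_Dqqe}. That proof only gives $|R(j,\ldots)|\le(Const)\,|j|^{s-2(M-D)}$ with exponent strictly below $-1/2$, hence $\sum_j R^2<\infty$; the $\ell^1$ sum can diverge when $M=D+1$ and $1\le s<3/2$. The fix is painless: write $\Var(T_n)=b^\top\Sigma_a\,b\le\|\Sigma_a\|_{op}\,\|b\|^2$ with $b_i=\Delta_{a,i}(f)$, and bound the operator norm by $\max_i\sum_{i'}|\Sigma_a(i,i')|=o(n^{1/2}\delta_n^{2D+s})$, which is precisely Lemma~\ref{lem:cov}. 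Combined with $\|b\|^2\le(Const)\,n\,\delta_n^{2M}(K^\alpha_{M,n})^2$ this yields
\[
\Var\bigl(T_n/(n\delta_n^{2D+s})\bigr)=o(n^{-1/2})\cdot\delta_n^{2(M-D)-s}(K^\alpha_{M,n})^2=o(n^{-1})
\]
under \eqref{eq:cond_K}, and your Chebyshev--Slutsky conclusion goes through unchanged.
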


Note that in the infill situation ($\delta_n=1/n$, $\alpha =1$), $K^1_{M,n}$ does not depend on $n$.  Obviously, \eqref{eq:cond_K} is met if $f$ is a polynomial up to an appropriate choice of the sequence $a$ (and $M$). In the infill situation, a sufficient condition 
for \eqref{eq:cond_K} is $M >D+ s/2+1/4$ which is always true. Moreover, it is worth noticing that we only assume regularity on the $M$-th derivative of the drift. No parametric assumption on the model is required, unlike in the MLE procedure.

\subsection{Elements of comparison with existing procedures}\label{ssec:compar}

\subsubsection{Quadratic variations versus MLE}

In this section, we compare our methodology to the very popular MLE method.
For details on the MLE procedure, the reader is referred to, e.g.\, \cite{rasmussen06gaussian,SWN03}.

\paragraph{Model flexibility} As mentioned in the introduction, the MLE methodology is a parametric method and requires the covariance function to belong to a parametric family of the form $\{k_{\theta} , \theta\in \Theta \}$. In the procedure proposed in this paper, it is only assumed that the semi-variogram satisfies the conditions given in Section \ref{ssec:ass:proc}, and that $D$ and $s$ are known. In this latter case, the suggested variation estimator is feasible, while the MLE is not defined.

\paragraph{Adding a drift}
In order to use the MLE estimator, it is necessary to assume that the mean function of the process is a linear combination of known parametric functions:
\begin{align*}
f(t)=\sum_{i=1}^q \beta_i f_i(t),
\end{align*}
with known $f_1,\ldots,f_q$ and where $\beta_1,\ldots,\beta_q$ need to be estimated. Our method is less restrictive and more robust. Indeed, we only assume the regularity of the $M$-th derivative of the mean function in assumption \eqref{eq:cond_K}. It does not require parametric assumptions neither on the semi-variogram nor the mean function. Furthermore, no estimation of the mean function is necessary.

\paragraph{Computational cost} The cost of our method is only $O(n)$ (the method only requires the computation of a sum) while the cost of the MLE procedure is known to be $O(n^3)$.

\paragraph{Practical issues} In some real data frameworks, it may occur that the MLE estimation diverges as can be seen in Section \ref{ssec:real:data}. Such a dead end can not be possible with our procedure.

\subsubsection{Quadratic variations versus other methods}

\paragraph{Least-square estimators}

In \cite{LLC02}, the authors propose a Least Square Estimator (LSE). More precisely, given an estimator $V_n$ of the semi-variagram assumed to belong to a parametric family of semi-variograms $\{V_{\theta} , \theta \in \Theta\}$, the Ordinary Least Square Estimation (OLSE) consists in minimizing in the parameter $\theta$ the quantity
\[
\sum_{h \in I} (V_n(h)-V_{\theta}(h))^2,
\]
where $I$ is a set of lags and $V_n$ is a non-parametric estimator of the semi-variogram.
Several variants as the Weighted Least Square Estimation and the General Least Square Estimation have been then introduced.
Then the authors of \cite{LLC02} provide necessary and sufficient conditions for these estimators to be asymptotically efficient and they show that when the number of lags used to define the estimators is chosen to be equal to the number of variogram parameters to be estimated, the ordinary least squares estimator, the weighted least squares and the generalized least squares
estimators are all asymptotically efficient. Similarly as for the MLE, least square estimators require a parametric family of variograms, while quadratic variation estimators do not.

\paragraph{Cross validation estimators}
Cross validation estimators \cite{Bac2014,bachoc2018asymptotic,bachoc2013cross,zhang2010kriging}
are based on minimizing scores based on the leave one out prediction errors, with respect to covariance parameters $\theta$, when a parametric family of semi-variograms $\{V_\theta ,  \theta \in \Theta\}$ is considered. Hence, as the MLE, they require a parametric family of variograms. Furthermore, as the MLE, the computation cost is in $O(n^3)$, while this cost is $O(n)$ for quadratic variation estimators. 

\paragraph{Composite likelihood}

Maximum composite likelihood estimators follow the principle of the MLE, with the aim of reducing its computational cost
\cite{Varin:Reid:Firth:2011,mateu2007fitting,pardo1997amle3d,stein2004approximating,vecchia1988estimation}.
In this aim, they consist in optimizing, over $\theta$, the sum, over $i=1,\ldots,n$ of the conditional likelihoods of the observation $X_i = X(t_i)$, given a small number of observations which observation locations are close to $t_i$
when a parametric family of semi-variograms $\{V_\theta ,  \theta \in \Theta\}$ is considered. The computation cost of an evaluation of this sum of conditional likelihood is $O(n)$, in contrast to $O(n^3)$ for the full likelihood. Nevertheless, the composite likelihood estimation requires to perform a numerical optimization, while our suggested estimator does not. Furthermore, a parametric family of variograms is required for the composite likelihood but not for our estimator. Finally, \cite{BL19} recently showed that the composite likelihood estimator has rate of convergence only $n^{s}$ when $D=0$ and $0 < s <1/2$ in \eqref{eq:expansion}. Hence, the rate of convergence of quadratic variation estimators ($n^{1/2}$) is larger in this case.

\subsubsection{Already known results on quadratic $a$-variations}

In \cite{IL97}, Istas and Lang consider a Gaussian process with stationary increments in the infill case and assume \ref{eq:expansion} as in our paper. Then they establish the asymptotic behavior of $V_{a,n}$ under more restrictive hypothesis of regularity on $V$ than ours (in particular on $r$ and on $\delta_n$). Then they propose an estimation of both the local Hölder index $H=D+s/2$ and the scale parameter $C$, based on quadratic $a$-variations and study their asymptotic behavior. The expression of the estimation of $C$ is much more complex than our that simply stems from the moment method. More precisely, they consider $I$ sequences $(a^j)_{j=1,\dots,I}$ with length $p_j$ and the vector $U$ of length $I$ whose coordinate $j$ is given by $V_{a^j,n}$. Noticing that the vector
$U/n$ converges to the product $AZ$ where $A$ is a $I\times p$ matrix derived from the sequences $a$ with $p=\max_j p_j$ and $Z$ is the vector of $(C(-1)^D(j\delta_n)^{2h})_{j=1,\dots,p}$, they estimate $Z$ by $\hat Z=(A^{\top} A)^{-1}A^\top U$ and derive their estimators of $h$ and $C$ from $\hat Z$.  

\medskip

As explained in the introduction in Section \ref{sec:intro}, Lang and Roueff in \cite{LR01} generalize the results of Istas and Lang in \cite{IL97} and \cite{KW97}. They consider the infill situation and 
use quadratic $a$-variations to estimate both the scale parameter $C$ and smoothness parameter $s$ under a similar hypothesis as in \ref{eq:expansion}. Furthermore, they assume three types of regularity assumptions on $V$: Hölder regularity of the derivatives at the origin, Besov regularity and global Hölder regularity. Nevertheless, estimating both $C$ and $s$ leads to a more complex estimator of $C$ and to proofs significantly different and more complicated.

\medskip

To summarize, our contributions, additionally to the existing references
\cite{LR01,IL97,KW97}, is to provide an estimation method for $C$ which definition, implementation and asymptotic analysis are simpler. As a result, we need fewer technical assumptions. In fact, our assumptions can be easily shown to hold in many classical examples. This also enables us to study the aggregation of quadratic variation estimators from different sequences, see Section \ref{ssec:simu_aggregation}.

\section{Efficiency of our estimation procedure} \label{section:opti}

In this section, in order to decrease the asymptotic variance, we propose a procedure to combine several quadratic $a$-variations leading to aggregated estimators. Then our goal is to evaluate the quality of these proposed estimators. In that view, we compare their asymptotic variance with the theoretical  Cramér-Rao bound in some particular cases in which this bound can be explicitly computed.

 \subsection{Aggregation of estimators} \label{ssec:aggregation}

Now in order to improve the estimation procedure, we suggest to aggregate a finite number of estimators:
\[
\sum_{j=1}^k \lambda_j C_{a^{(j)},n} 
\]
based on $k$ different sequences  $a^{(1)},...,a^{(k)}$ with weights $\lambda_1,\dots,\lambda_k$. Ideally, one should provide an adaptive statistical procedure to choose the optimal number $k^*$ of sequences, the optimal sequences and the optimal weights $\lambda^*$.  Such a task is beyond the scope of this paper. Nevertheless, in this section, 
we consider a given number $k$ of given sequences $a^{(1)},...,a^{(k)}$ leading to the estimators $C_{a^{(1)},n},\dots ,C_{a^{(k)},n}$ defined by \eqref{eq:Can}. 
Then we provide the optimal weights $\lambda^*$. Using  \cite{lavancier2016general} or \cite{bates1969combination}, one can establish the following lemma.

\begin{lemma}
We assume that for $j=1,...,k$, the conditions of Corollary  \ref{cor:CLT_Can_joint} are met. Let $R$ be the  $k \times k$  asymptotic variance-covariance matrix  of  the vector of length $k$ whose elements are given by $(n^{1/2}/C) C_{a^{(j)},n}$, $j=1,\dots,k$.  Then for any $\lambda_1,\dots,\lambda_k$, 
\[
(n^{1/2}/C)( \sum_{j=1}^k \lambda_j C_{a^{(j)},n} - C)\underset{n \to +\infty}{\overset{D}{\longrightarrow}}  \mathcal{N}(0,\lambda^{T} R \lambda).
\]

\medskip

Let $\mathbf{1}_{k}$ be  the "all one" column vector  of size $k$  and define
\[
\lambda^*
=
\frac{
R^{-1}
\mathbf{1}_{k}
}
{
\mathbf{1}_{k}^T
R^{-1}
\mathbf{1}_{k}
}.
\]
One has $\sum_{j=1}^k \lambda^*_j = 1$  and
\[
\lambda^{*T} R \lambda^*\leqslant \lambda^{T} R \lambda.
\]
\end{lemma}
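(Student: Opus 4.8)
The plan is to split the statement into two parts: the joint CLT for an arbitrary linear combination $\sum_j \lambda_j C_{a^{(j)},n}$, and the optimality of the weights $\lambda^*$. The first part is essentially a corollary of Corollary~\ref{cor:CLT_Can_joint}. Indeed, that corollary already gives the joint asymptotic normality of the vector $(C_{a^{(j)},n})_{j=1,\dots,k}$ after centering by $C$ and rescaling; by the continuous mapping theorem (or the Cramér--Wold device in reverse), any fixed linear functional of an asymptotically Gaussian vector is asymptotically Gaussian with the induced variance. So with $R$ the asymptotic covariance matrix of $((n^{1/2}/C) C_{a^{(j)},n})_j$ — which is well defined under the assumptions of Corollary~\ref{cor:CLT_Can_joint} since each $\Var(C_{a^{(j)},n}) = (Const)\, n^{-1}(1+o(1))$ by Theorem~\ref{th:CLT_Can} and the cross-covariances are controlled by Corollary~\ref{cor:Van} — one immediately gets
\[
(n^{1/2}/C)\Bigl(\sum_{j=1}^k \lambda_j C_{a^{(j)},n} - C\Bigr) \underset{n\to+\infty}{\overset{D}{\longrightarrow}} \mathcal{N}\bigl(0,\lambda^{T} R \lambda\bigr),
\]
using here that $C_{a^{(j)},n}$ is asymptotically unbiased for $C$, so that $\sum_j \lambda_j C_{a^{(j)},n} - C = \sum_j \lambda_j (C_{a^{(j)},n} - C) + o_P(n^{-1/2})$ whenever $\sum_j \lambda_j = 1$; for general $\lambda$ one keeps the centering as stated.

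For the optimality part, the claim is the standard Gauss--Markov / best-linear-combination fact, which I would prove directly rather than by invoking \cite{bates1969combination}. First, $R$ is symmetric positive definite: it is a limit of covariance matrices, hence symmetric positive semidefinite, and the invertibility hypothesis in Corollary~\ref{cor:CLT_Can_joint} (equivalently, the invertibility of $\Gamma_\infty$ there) upgrades this to positive definiteness, so $R^{-1}$ exists and is also positive definite. Then $\lambda^* = R^{-1}\mathbf{1}_k / (\mathbf{1}_k^T R^{-1}\mathbf{1}_k)$ is well defined, and $\mathbf{1}_k^T \lambda^* = 1$ is immediate. To see $\lambda^{*T} R \lambda^* \leqslant \lambda^T R \lambda$ for every $\lambda$ with $\mathbf{1}_k^T \lambda = 1$, write $\lambda = \lambda^* + \mu$ with $\mathbf{1}_k^T \mu = 0$; then
\[
\lambda^T R \lambda = \lambda^{*T} R \lambda^* + 2\lambda^{*T} R \mu + \mu^T R \mu,
\]
and the cross term vanishes because $R\lambda^* = \mathbf{1}_k/(\mathbf{1}_k^T R^{-1}\mathbf{1}_k)$ is proportional to $\mathbf{1}_k$, so $\lambda^{*T} R \mu = (\mathbf{1}_k^T \mu)/(\mathbf{1}_k^T R^{-1}\mathbf{1}_k) = 0$; since $\mu^T R \mu \geqslant 0$ by positive semidefiniteness, the inequality follows. (Strictly, the lemma's displayed inequality as written is for $\lambda$ a general weight vector, but the content is the constrained minimization over $\sum_j \lambda_j = 1$, which is the relevant case for aggregating unbiased estimators; I would state it with the normalization $\sum_j \lambda_j = 1$ to match the hypothesis implicitly used.)

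I do not expect a serious obstacle here: the only mild subtlety is bookkeeping the normalization $\sum_j \lambda_j = 1$ so that the limiting distribution is genuinely centered at $C$, and checking that $R$ is invertible — but the latter is assumed outright in the hypotheses inherited from Corollary~\ref{cor:CLT_Can_joint}, so it is enough to note the equivalence between invertibility of the correlation matrix $\Gamma_\infty$ and of the covariance matrix $R$ (they differ by conjugation with the positive diagonal matrix of asymptotic standard deviations). The algebraic optimality step is a one-line completion-of-the-square argument once positive definiteness is in hand.
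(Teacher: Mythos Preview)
Your proposal is correct. Note that the paper does not actually prove this lemma: it simply states that the result follows from \cite{lavancier2016general} or \cite{bates1969combination}, so there is no proof in the paper against which to compare yours.

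Your direct argument is in fact more informative than the paper's bare citation. The CLT part is exactly the application of Corollary~\ref{cor:CLT_Can_joint} plus the continuous mapping theorem, and your completion-of-the-square derivation of the optimal $\lambda^*$ is the standard elementary proof of this Gauss--Markov-type fact. You also correctly flag the one genuine ambiguity in the lemma as stated: the displayed CLT centered at $C$ and the minimality claim $\lambda^{*T}R\lambda^* \leqslant \lambda^T R\lambda$ both require the constraint $\mathbf{1}_k^T\lambda = 1$, which the lemma leaves implicit. Your remark that $R$ inherits invertibility from $\Gamma_\infty$ via conjugation by the diagonal of asymptotic standard deviations is the right way to close that small gap.
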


As will be shown with simulations in Section \ref{sec:num}, the aggregated estimator considerably improves each of the original estimators $C_{a^{(1)},n},...,C_{a^{(k)},n}$. We call $\widetilde{v}_{a,s}$ its normalized asymptotic variance.

\subsection{Cramér-Rao bound} \label{section:cramer:rao}

To validate the aggregation procedure, we want to compare the obtained asymptotic variance with the theoretical  Cramér-Rao bound. In that view, we compute in the following section the Cramér-Rao bound in two particular cases. 

\medskip

We consider a family $Y_C$ ($C \in  \R^+$)  of centered Gaussian processes.
Let $R_C$ be the $(n-1) \times (n-1)$ variance-covariance matrix defined by
\[
(R_C)_{i,j}
=
\Cov
\left( 
Y_C 
\left( i \delta_n \right)
- 
Y_C 
\left( (i-1) \delta_n
\right)
,
Y_C 
\left( i \delta_n \right)
- 
Y_C
\left( (i-1)\delta_n
\right)
\right).
\] 
Assume that $C \mapsto R_C$ is twice differentiable and  $R_C$ is invertible for all $C \in \R^+$. Then, let 
\begin{equation} \label{e:zaza6}
I_C = 
\frac{1}{2} \mathrm{Tr} 
\left( 
R_C^{-1}
\left(
\frac{\partial}{\partial{C}}
R_C
\right)
R_C^{-1}
\left(
\frac{\partial}{ \partial{C}}
R_C
\right)
\right)
\end{equation}
be the Fisher information. The quantity $1/I_C$ is the Cramér-Rao lower bound for estimating $C$ based on 
\[
\big\{ Y_C
(  i \delta_n)
- 
Y_C 
((i-1)\delta_n) \big\}_{i=2,...,n}
\]
(see for instance \cite{Bac2014,dahlhaus1989efficient}).  Now we give two examples of  families of processes for which we can compute the Cramér-Rao lower bound explicitly. The first example is obtained from the IFBM defined in Section \ref{ssec:ex}.

\begin{lemma}  \label{lem:cramer:rao:fbm}
Let $0 < s < 2$ and let $X$  be  equal to $ \sqrt{C}B^{(-D)}_s$ where $B^{(-D)}_s$ is the IFBM. Then  $ Y_C = X^{(D)}$ is a FBM whose semi-variogram $V_C$ is given by 
\begin{equation} \label{e:zaza5}
V_C(h)=\frac{1}{2}
\mathbb{E} \left[
\left(
Y_C(t+h)
-
Y_C(t)
\right)^2
\right]
=
C |h|^s.
\end{equation}
Hence in this case, we have $1/I_C=2C^2/(n-1)$.
\end{lemma}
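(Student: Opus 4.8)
The plan is to show that the Fisher information $I_C$ in \eqref{e:zaza6} equals $(n-1)/(2C^2)$, from which the Cramér-Rao bound $1/I_C = 2C^2/(n-1)$ follows immediately. The key observation is the scaling structure of the FBM: since $V_C(h) = C|h|^s$, the matrix $R_C$ is \emph{linear} in $C$. Indeed, writing $Y_C = \sqrt{C}\, Y_1$ (equality in distribution, valid because $Y_C = X^{(D)}$ with $X = \sqrt{C}\, B_s^{(-D)}$ and differentiation in quadratic mean is linear), the increment vector of $Y_C$ equals $\sqrt{C}$ times that of $Y_1$, hence $R_C = C R_1$.

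From $R_C = C R_1$ one gets $\partial R_C/\partial C = R_1 = R_C / C$. Substituting into \eqref{e:zaza6},
\[
I_C = \frac{1}{2}\,\mathrm{Tr}\!\left( R_C^{-1} \frac{R_C}{C} R_C^{-1} \frac{R_C}{C}\right)
= \frac{1}{2 C^2}\,\mathrm{Tr}\!\left( R_C^{-1} R_C \right)
= \frac{1}{2 C^2}\,\mathrm{Tr}(\mathrm{Id}_{n-1}) = \frac{n-1}{2 C^2}.
\]
Therefore $1/I_C = 2C^2/(n-1)$, as claimed. I would also briefly note that $R_1$ is indeed invertible, so that the differentiability and invertibility hypotheses preceding \eqref{e:zaza6} are satisfied: this is exactly the non-degeneracy of the increments of the FBM (equivalently, the ND property of $B_s$, which for $D>0$ follows from Proposition \ref{prop:ifbm}, and for $D=0$ is the classical nondegeneracy of fractional Brownian increments).

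The only genuinely substantive point is the identification $Y_C \overset{d}{=} \sqrt{C}\, Y_1$, i.e.\ that the family is a pure scale family; everything else is a one-line trace computation. For $D=0$ this is immediate from the covariance formula of the FBM given in Section \ref{ssec:ex}, which is linear in $C$. For $D \geq 1$ one must check that $X^{(D)}$, the $D$-th quadratic-mean derivative of $\sqrt{C}\,B_s^{(-D)}$, is well-defined and equals (in distribution) $\sqrt{C}$ times the corresponding derivative at $C=1$; this is a routine consequence of the linearity of the $m$-fold integration defining $B_s^{(-m)}$ and of quadratic-mean differentiation, together with the fact that $\Var(B_s^{(-D)}(t+h) - B_s^{(-D)}(t))$ has leading term a constant multiple of $|h|^{s}$ after $D$ derivatives (cf.\ Lemma \ref{l:jma}), so that $Y_C$ is indeed a FBM with semi-variogram $C|h|^s$. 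Thus I expect the main (mild) obstacle to be merely the bookkeeping in passing from the IFBM to its derivative process; the Fisher-information computation itself is trivial once the scale structure is in hand.
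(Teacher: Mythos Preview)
Your proof is correct and follows essentially the same approach as the paper: from $V_C(h)=C|h|^s$ one gets $R_C = C R_1$, hence $\partial R_C/\partial C = R_1$, and substitution into \eqref{e:zaza6} yields $I_C=(n-1)/(2C^2)$. You simply provide more justification (the scale identification $Y_C\overset{d}{=}\sqrt{C}\,Y_1$ and the invertibility of $R_1$) than the paper's two-line argument, but the core computation is identical.
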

\begin{proof}

\eqref{e:zaza5}  implies that  $\partial R_C/ \partial{C}
 = R_1$  then \eqref{e:zaza6}  gives the result. 
\end{proof}

Now we consider a second example given by the generalized  Slepian process defined in Section \ref{ssec:ex}. 

Let $s\leq 1$ and $ Y_C$ with stationary autocovariance function $k_C$ defined by
\begin{equation} \label{eq:def:gen:slepian}
 k_C(h) =(1-(C/2)|h|^s)^+, \quad \textrm{for any} \quad h \in \mathbb{R}.
\end{equation}
 This function is convex on $\mathbb{R}$ and  it follows from P\'olya's theorem \cite{polya1949remarks} that $k_C$ is a  valid autocovariance function. We thus easily obtain the following lemma whose proof is omitted.
 \begin{lemma}  \label{lem:cramer:rao:slepian}
 Let $X$ be the integration $D$ times  of $Y_C$ defined via \eqref{eq:def:gen:slepian}. Then, in the infill situation ($\delta_n=1/n$, $\alpha =1$)  and  for $C <2$, the semi-variogram of $Y_C$ is given by \eqref{e:zaza5} and by consequence $1/I_C = 2 C^2/(n-1)$. 
 \end{lemma}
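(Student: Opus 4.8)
The plan is to mimic exactly the argument of Lemma \ref{lem:cramer:rao:fbm}, exploiting that the Fisher information formula \eqref{e:zaza6} trivializes whenever the increment covariance matrix $R_C$ is a linear (in fact scalar multiple) function of $C$. First I would observe that $X$ is obtained by integrating $Y_C$ exactly $D$ times, so the $D$-th quadratic-mean derivative of $X$ is $Y_C$ itself, and the increments that enter the definition of $R_C$ in \eqref{e:zaza6} are the increments of $Y_C$ at the grid points $i\delta_n = i/n$. Hence $(R_C)_{i,j} = -V_C(|i-j|\delta_n) + \tfrac12 V_C((|i-j|-1)\delta_n) + \tfrac12 V_C((|i-j|+1)\delta_n)$ (the usual second-difference identity \eqref{eq:prop3_aa}), so everything is governed by the semi-variogram $V_C$ of $Y_C$.

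Next I would compute $V_C$ from the stationarity relation $V_C(h) = k_C(0) - k_C(h)$ with $k_C(h) = (1 - (C/2)|h|^s)^+$. The key point is the restriction to the infill situation $\delta_n = 1/n$: for any fixed pair $(i,j)$ with $i,j \in \{2,\dots,n\}$ the lags $|i-j|\delta_n$, $(|i-j|\pm 1)\delta_n$ are all at most $1$, and for $C < 2$ one has $(C/2)|h|^s < 1$ on $[0,1]$, so the positive-part truncation is inactive on the whole relevant range and $k_C(h) = 1 - (C/2)|h|^s$ there. Therefore $V_C(h) = (C/2)|h|^s$ for $|h| \le 1$, which differs from \eqref{e:zaza5} only by the harmless factor $1/2$ absorbed into $C$; more to the point it is linear in $C$, so $\partial R_C/\partial C = R_1$ exactly (with $R_1$ built from $V_1(h) = |h|^s/2$), $\partial^2 R_C/\partial C^2 = 0$, and $R_C = C R_1$ is invertible for $C \in (0,2)$ because $R_1$ is (this uses that the increments of a nondegenerate stationary Gaussian process have a nonsingular covariance matrix, which holds for the Slepian process by Pólya's theorem as already invoked). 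Plugging into \eqref{e:zaza6}:
\[
I_C = \frac12 \mathrm{Tr}\!\left( (C R_1)^{-1} R_1 (C R_1)^{-1} R_1 \right) = \frac{1}{2C^2}\,\mathrm{Tr}(\mathrm{Id}_{n-1}) = \frac{n-1}{2C^2},
\]
so $1/I_C = 2C^2/(n-1)$, as claimed.

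The only genuine subtlety — and the reason the restriction to $\delta_n = 1/n$ and to $C < 2$ appears in the statement — is ensuring the positive-part truncation in $k_C$ never bites at the lags actually used: in the mixed situation $\delta_n = n^{-\alpha}$ with $\alpha < 1$ one would encounter lags $|i-j|\delta_n$ up to $(n-1)\delta_n \to \infty$, on which $k_C$ is flat at $0$, breaking both the linearity of $R_C$ in $C$ and its invertibility. So the proof is essentially a one-line consequence of Lemma \ref{lem:cramer:rao:fbm}'s computation once this domain check is made, which is why the paper states the lemma with its proof omitted.
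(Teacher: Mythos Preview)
Your proposal is correct and matches the paper's intended (but omitted) argument: once the positive-part truncation in $k_C$ is seen to be inactive on all lags $\leq (n-1)/n < 1$ under $C<2$ and $\delta_n=1/n$, the increment covariance matrix $R_C$ is linear in $C$ and the one-line Fisher computation of Lemma~\ref{lem:cramer:rao:fbm} applies verbatim. Your observation that $V_C(h)=(C/2)|h|^s$ rather than $C|h|^s$ is accurate (a harmless normalization discrepancy in the paper), and as you note it does not affect the conclusion since only the proportionality $R_C = C\,R_1$ is used.
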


\section{Numerical results}\label{sec:num}

In this section, we first study to which extent the asymptotic results of Proposition \ref{prop:Van_Dqqe} and Theorem \ref{th:CLT_Can} are representative of the finite sample behavior of quadratic $a$-variations estimators. Then, we study the asymptotic variances of these estimators provided by Proposition \ref{prop:Van_Dqqe} and that of the aggregated $a$-variations estimators of Section \ref{ssec:aggregation}.

\subsection{Simulation study of the convergence to the asymptotic distribution}\label{ssec:simu}

We carry out a Monte Carlo study of the quadratic $a$-variations estimators in three different cases. In each of the three cases, we simulate $N = 10,000$ realizations of a Gaussian process on $[0,1]$ with zero mean function and stationary autocovariance function $k$.
In the case $D=0$, we let $k(h) = \exp(-C|h|)$. Hence $\left(\mathcal{H}_{1}\right)$ holds with $D=0$ and $s=1$. In the case $D=1$, we use the  Mat\'ern $3/2$ autocovariance
\cite{roustant12dice} :
  \[
k(h) = 
\left( 1 + \sqrt{3} \frac{|h|}{\theta} \right)
e^{ - \sqrt{3} \frac{|h|}{\theta} }.
\]
 One can show, by developing $k$ into power series, that $\left(\mathcal{H}_{1}\right)$ holds with $D=1$, $s=1$ and $C = 6 \sqrt{3} / \theta^3$. Finally, in the case $D=2$, we use the Mat\'ern 5/2 autocovariance function: 
\[
k(h) = 
\left( 1 + \sqrt{5} \frac{|h|}{\theta} +  \frac{5 |h|^2}{3 \theta^2} \right)
e^{ - \sqrt{5} \frac{|h|}{\theta} }.
\]
Also $\left(\mathcal{H}_{1}\right)$ holds  true with $D=2$, $s=1$ and $C = 200  \sqrt{5} / 3 \theta^5$.  \bigskip

In each of the three cases, we set $C=3$. For $n=50$, $n=100$ and $n=200$, we observe each generated process at $n$ equispaced observation points on $[0,1]$ and compute the quadratic $a$-variations estimator $C_{a,n}$ of Section \ref{ssec:Can}. When $D=i$, $i=0,1,2$, we choose $a$ to be the elementary sequence of order $i+1$. 

\medskip

 In Figure \ref{fig:histograms}, we display the histograms of the $10,000$ estimated values of $C$ for the nine configurations of $D$ and $n$. We also display the corresponding asymptotic Gaussian probability density functions provided by Proposition \ref{prop:Van_Dqqe} and Theorem \ref{th:CLT_Can}.  We observe that there are few differences between the histograms and limit probability density functions between the cases ($D=0,1,2$). In these three cases, the limiting Gaussian distribution is already a reasonable approximation when $n=50$. This approximation then improves for $n=100$ and becomes very accurate when $n=200$. Naturally, we can also see the estimators' variances decrease as $n$ increases. Finally, the figures suggest that the discrepancies between the finite sample and asymptotic distributions are slightly more pronounced with respect to the difference in mean values than to the difference in variances. As already pointed out, these discrepancies are mild in all the configurations.

\begin{figure}

\begin{tabular}{ccc}
\includegraphics[width=5cm]{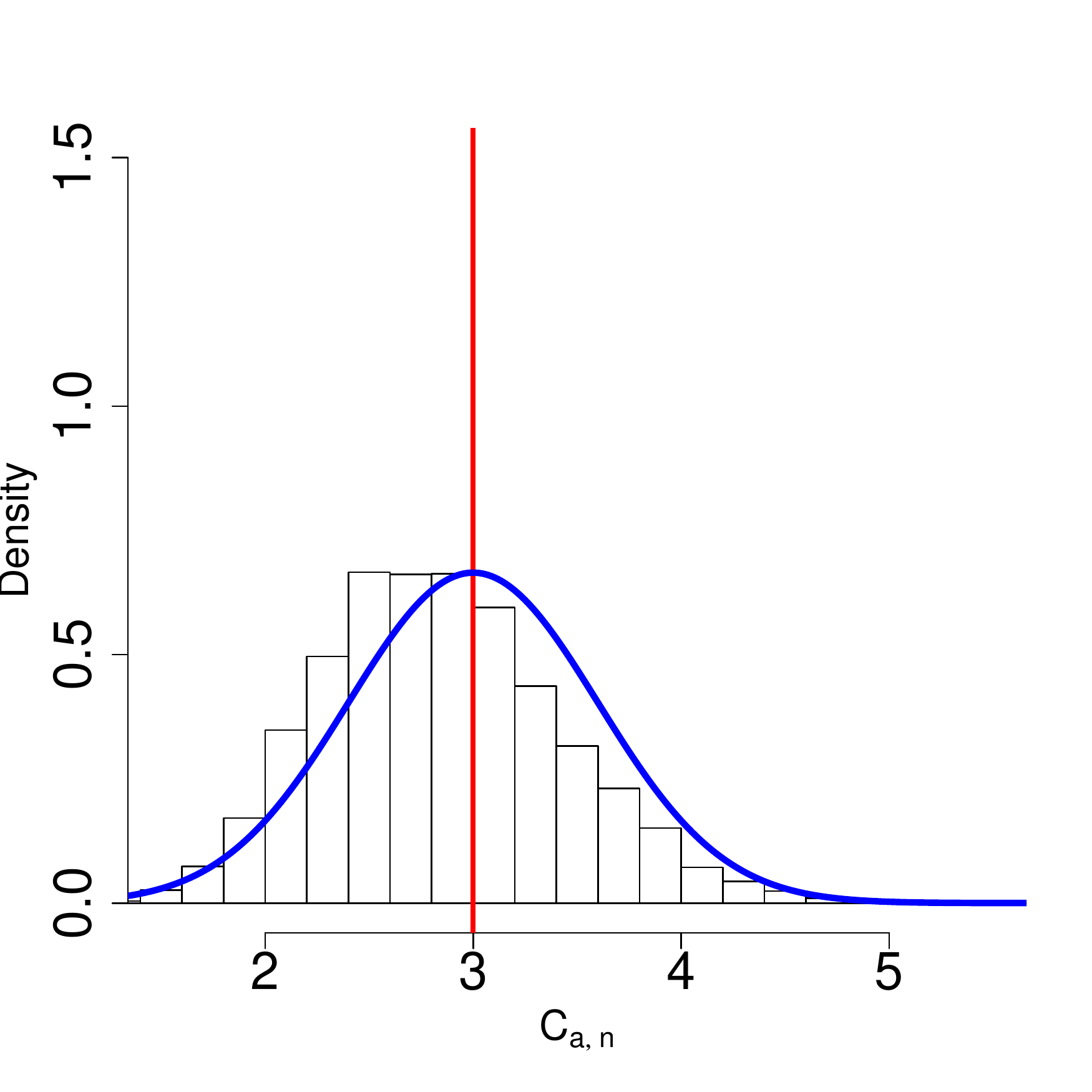}
&
\includegraphics[width=5cm]{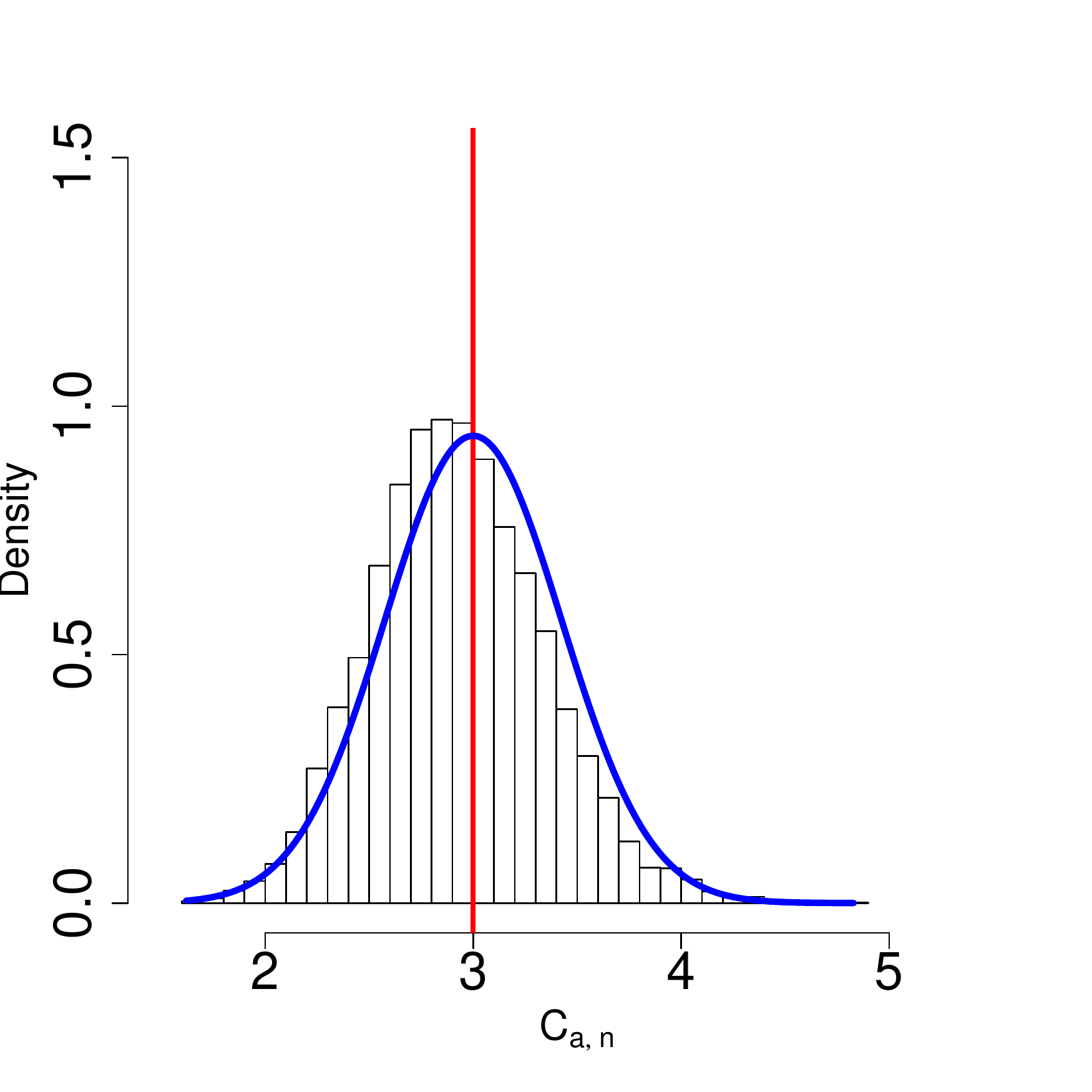}
&
\includegraphics[width=5cm]{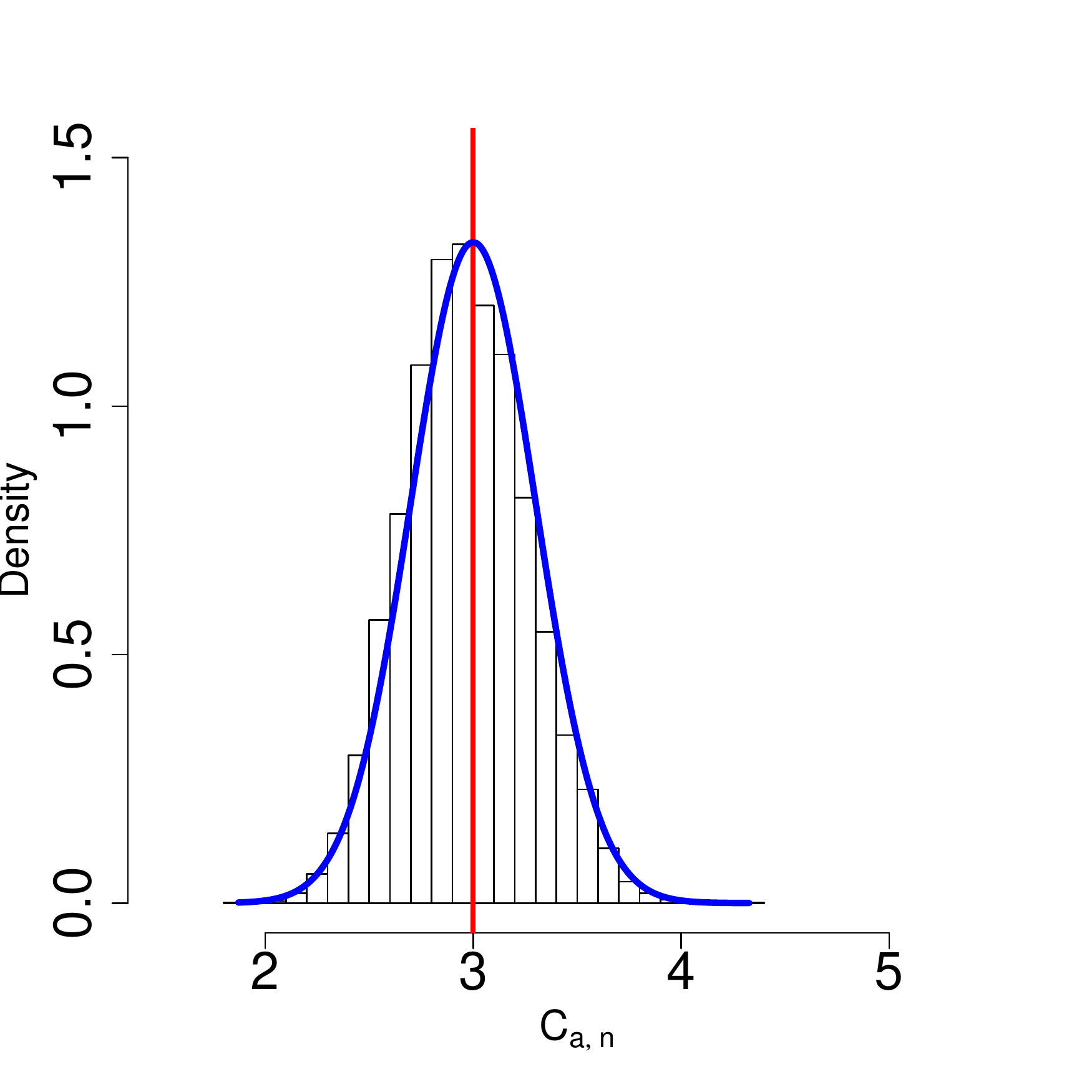}
\\
\includegraphics[width=5cm]{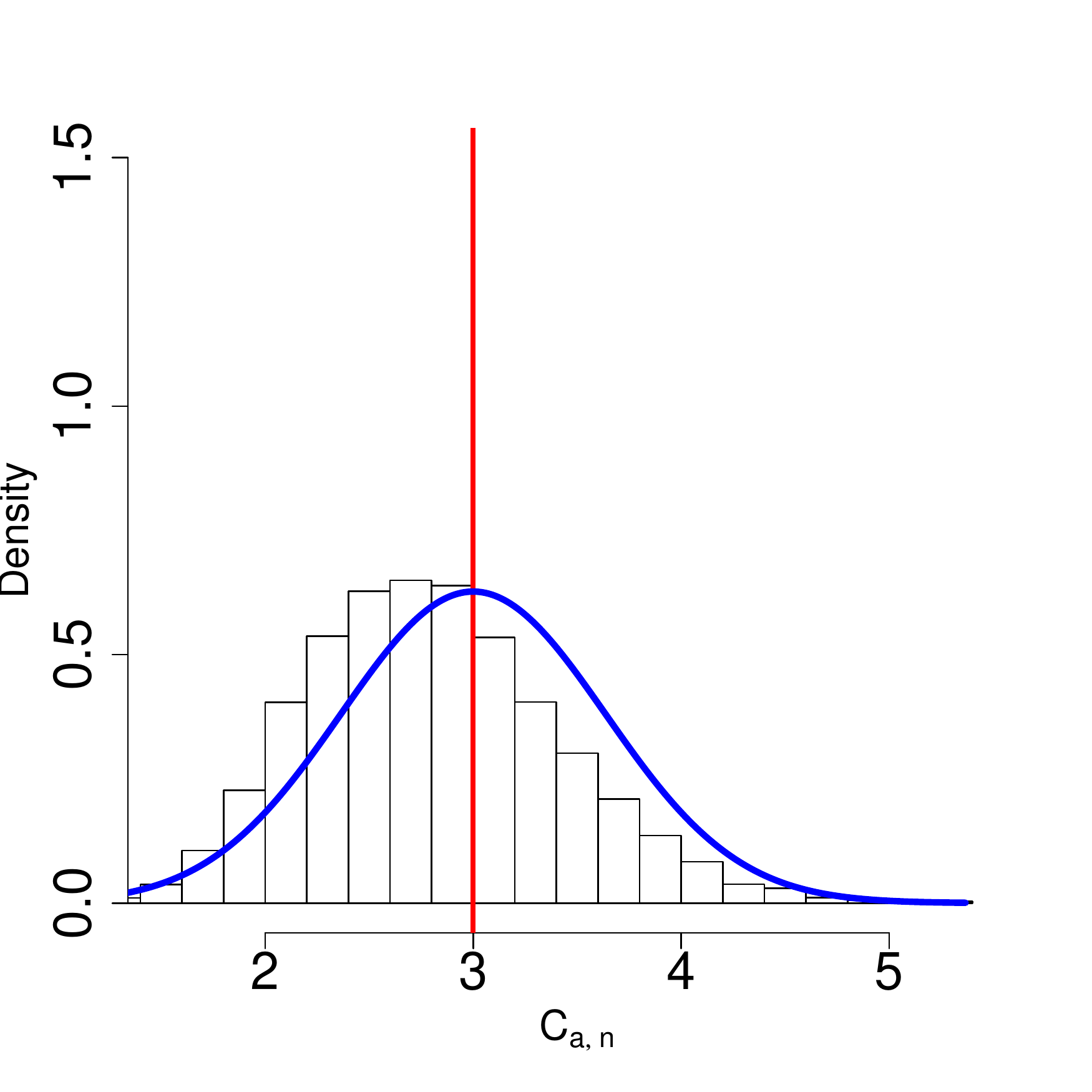}
&
\includegraphics[width=5cm]{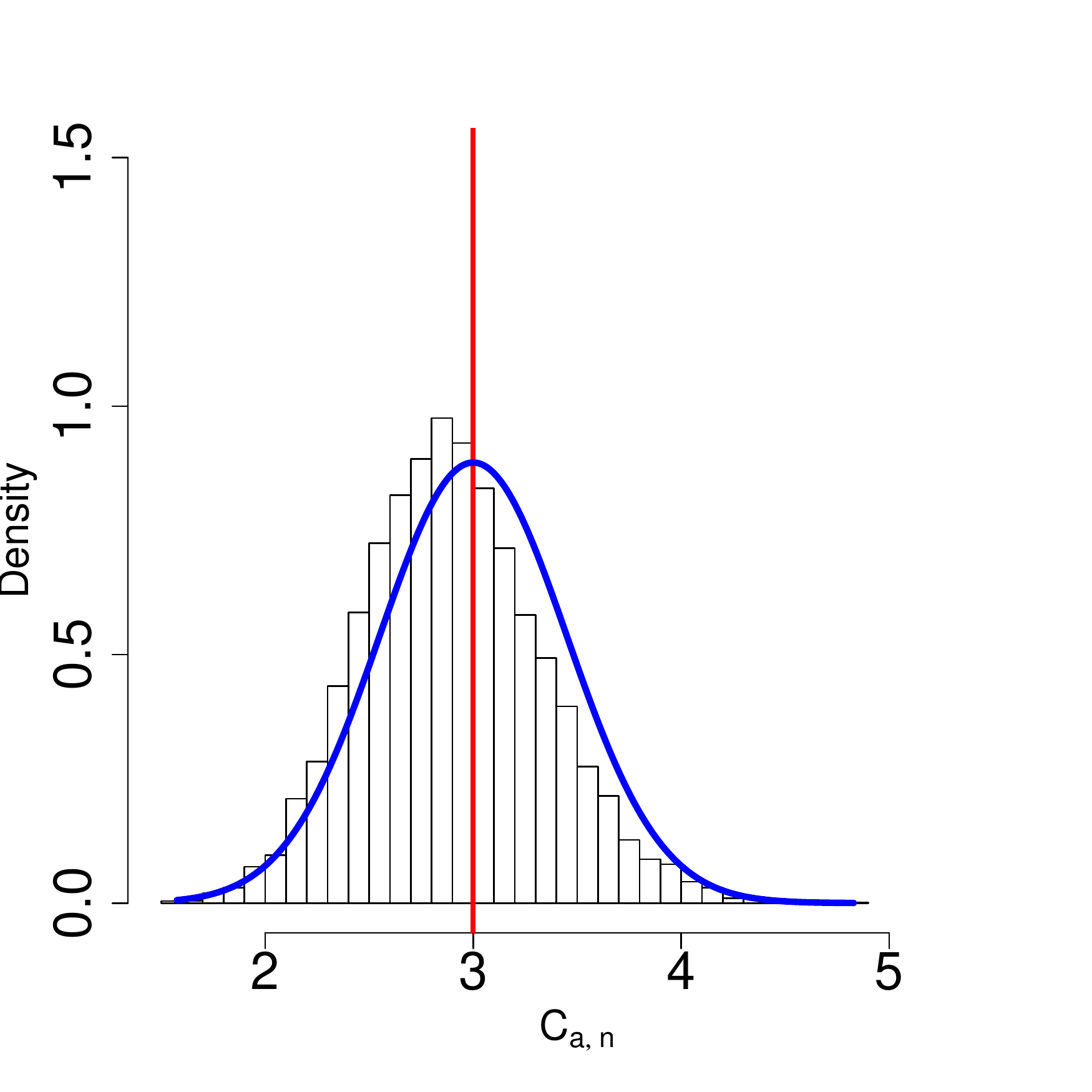}
&
\includegraphics[width=5cm]{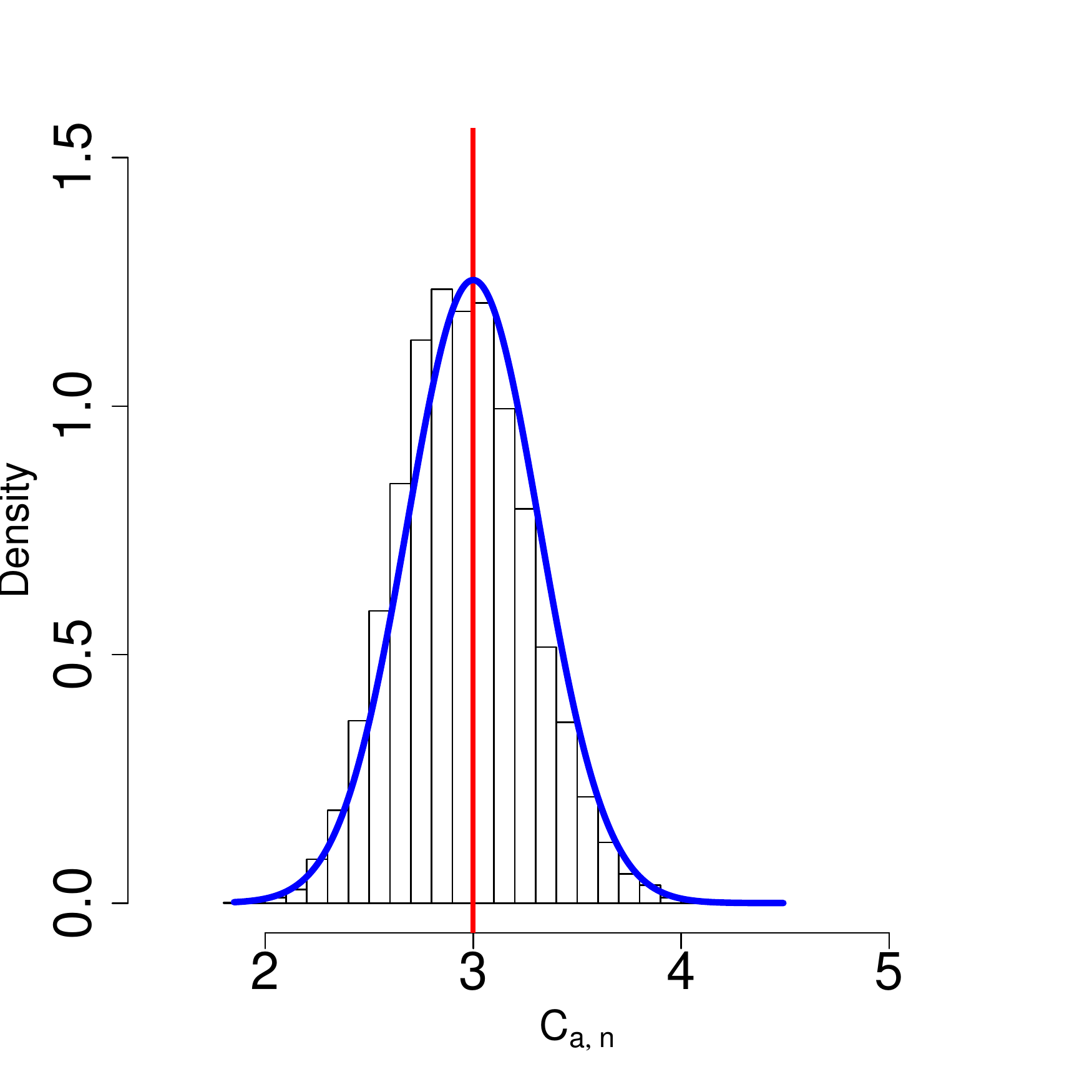}
\\
\includegraphics[width=5cm]{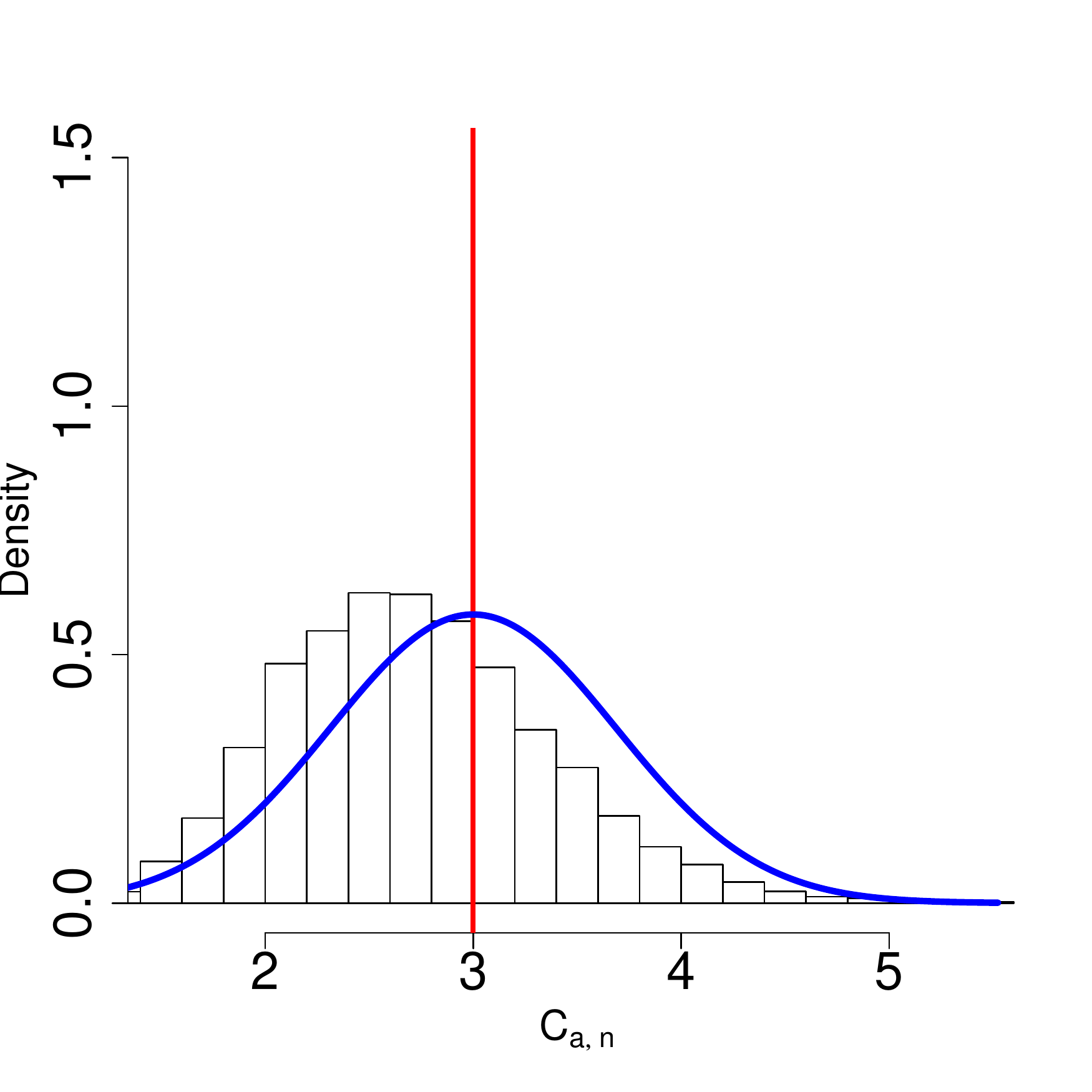}
&
\includegraphics[width=5cm]{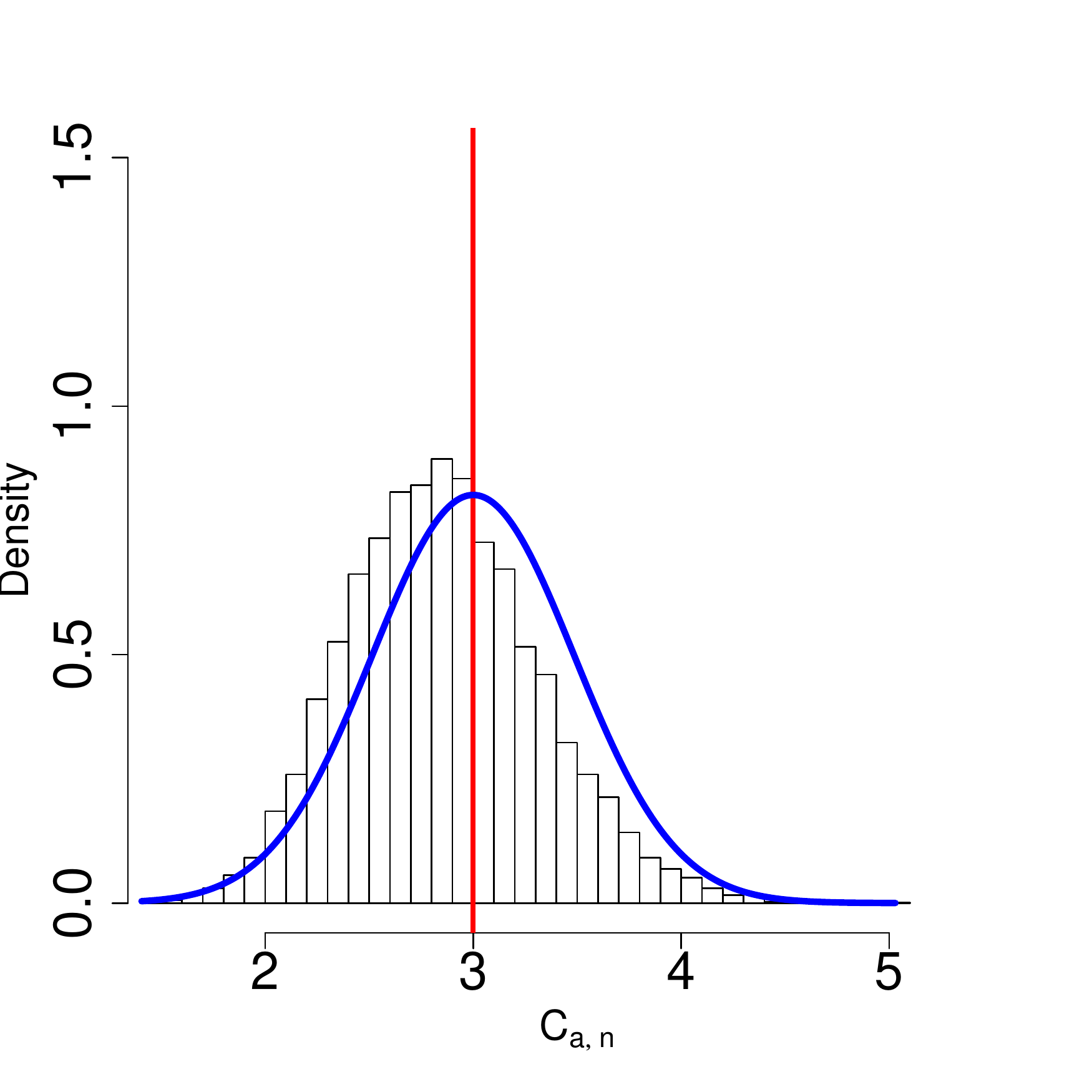}
&
\includegraphics[width=5cm]{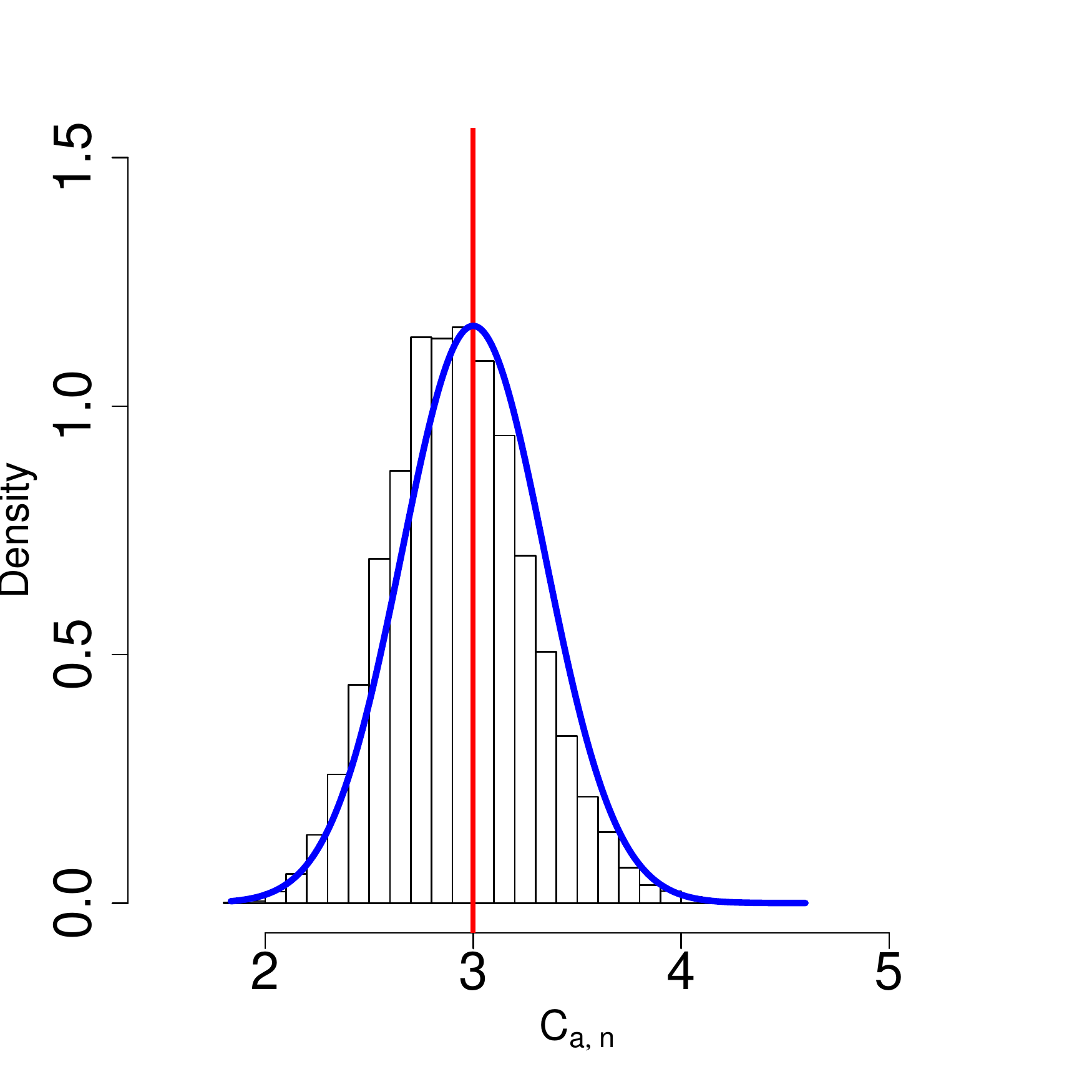}
\\
\end{tabular}

\caption{Comparison of the finite sample distribution of $C_{a,n}$ (histograms) with the asymptotic Gaussian distribution provided by Proposition \ref{prop:Van_Dqqe} and Theorem \ref{th:CLT_Van} (probability density function in blue line). The vertical red line denotes the true value of $C=3$. From left to right, $n=50,100,200$. From top to bottom, $D=0,1,2$. }
\label{fig:histograms}
\end{figure}

\subsection{Analysis of the asymptotic distributions}\label{ssec:simu_aggregation}

Now we consider the normalized asymptotic variance of $C_{a,n}$ obtained from \eqref{eq:var_van_Dqqe}  in Proposition \ref{prop:Van_Dqqe}. We consider the infill situation ($\delta_n = 1/n$, $\alpha=1$) and we let 

\begin{equation} \label{eq:tildeV}
\widetilde{v}_{a,s}
=
\frac{
 2 
\sum_{i\in \Z}  R^2(i,1,2D,\abs{\cdot{}}^{s},a^{2*}) 
}{
  R^2(0,1,2D,\abs{\cdot}^s,a^{2*}
},
\end{equation}
so that $(n^{1/2}/C)( C_{a,n} - C)$ converges to a $\mathcal{N}(0,\widetilde{v}_{a,s})$ distribution as $n \to \infty$, where $\widetilde{v}_{a,s}$ already defined in Section \ref{ssec:aggregation} does not depend on $C$ (nor on $n$). 

 \medskip

First, we consider the case $D=0$ and we plot $\widetilde{v}_{a,s}$ as a function of $s$ for various sequences $a$ in Figure \ref{fig:Dzero:var}.  The considered sequences are the following:
\begin{itemize}
\item the elementary sequence of order  $1$: $a ^{(1)}$ given by (-1,1);
\item the elementary sequence of order $2$: $a ^{(2)}$ given by (1,-2,1);
\item the elementary sequence of order $3$: $a ^{(3)}$ given by (-1, 3, -3, 1);
\item the elementary sequence of order $4$,  $a ^{(4)}$ given by (1,-4, 6,-4,1);
\item a sequence of order 1 and with length 3:  $a ^{(5)}$ given by (-1,-2,3);
\item a Daubechies wavelet sequence  \cite{daubechies1988orthonormal}  with $M=2$ as in \cite{IL97}:  $a ^{(6)}$ given by
 (-0.1830127,-0.3169873,1.1830127,-0.6830127);
\item a second Daubechies wavelet sequence with $M=3$:  $a ^{(7)}$ given by (0.0498175,0.12083221,-0.19093442,-0.650365,1.14111692,-0.47046721).
\end{itemize}\medskip

From Figure \ref{fig:Dzero:var}, we can draw several conclusions. First, the results of Section \ref{section:cramer:rao} suggest that $2$ is a plausible lower bound for $\widetilde{v}_{a,s}$. We shall call the value $2$ the Cramér-Rao lower bound. Indeed, we observe numerically that $\widetilde{v}_{a,s} \geq 2$ for all the $s$ and $a$ considered here.
Then we observe that, for any value of $s$, there is one of the $\widetilde{v}_{a,s}$ which is close to $2$ (below $2.5$). This suggests that quadratic variations can be approximately as efficient as maximum likelihood, for appropriate choices of the sequence $a$. We observe that, for $s=1$, the elementary sequence of order $1$ ($a_0=-1$, $a_1=1$) satisfies $\widetilde{v}_{a,s}  = 2$. This is natural since for $s=1$, this quadratic $a$-variations estimator coincides with the maximum likelihood estimator, when the observations stem from the standard Brownian motion. Except from this case $s=1$, we could not find other quadratic $a$-variations estimators reaching exactly the Cramér-Rao lower bound $2$ for other values of $s$.  

\medskip

Second, we  observe that the normalized asymptotic variance $\widetilde{v}_{a,s}$ blows up for the two sequences $a$ satisfying $M = 1$ when $s$ reaches $1.5$. This comes  from Remark \ref{rem:cas_pourris}: the variance of the quadratic $a$-variations estimators with $M=1$ is of order larger than $1/n$ when $s \geq 1.5$.
Consequently, we plot $\widetilde{v}_{a,s}$ for $0.1 \leq s \leq 1.4$ for these two sequences. For the other sequences satisfying $M\geq 2$, we plot $\widetilde{v}_{a,s}$ for $0.1 \leq s \leq 1.9$. 

\medskip

 Third, it  is difficult to extract clear conclusions  about the choice of the sequence: for $s$ smaller than, say, $1.2$ the two sequences with order $M = 1$ have the smallest asymptotic variance. Similarly, the elementary sequence of order $2$ has a smaller normalized variance than that of order $3$ for all values of $s$. Also, the Daubechies sequence of order $2$ has a smaller normalized variance than that of order $3$ for all values of $s$. Hence, a conclusion of the study in Figure \ref{fig:Dzero:var} is the following. When there is a sequence of a certain order for which the corresponding estimator reaches the rate $1/n$ for the variance, there is usually no benefit in using a sequence of larger order. Finally, the Daubechies sequences appear to yield smaller asymptotic variances than the elementary sequences (the orders being equal). The sequence of order $1$ given by $(a_0,a_1,a_2) = (-1,-2,3)$ can yield a smaller or larger asymptotic variance than the elementary sequence of order $1$, depending on the value of $s$. For two sequences of the same order $M$, it seems nevertheless challenging to explain why one of the two provides a smaller asymptotic variance.   \bigskip

\begin{figure}
\centering
\includegraphics[width=10cm]{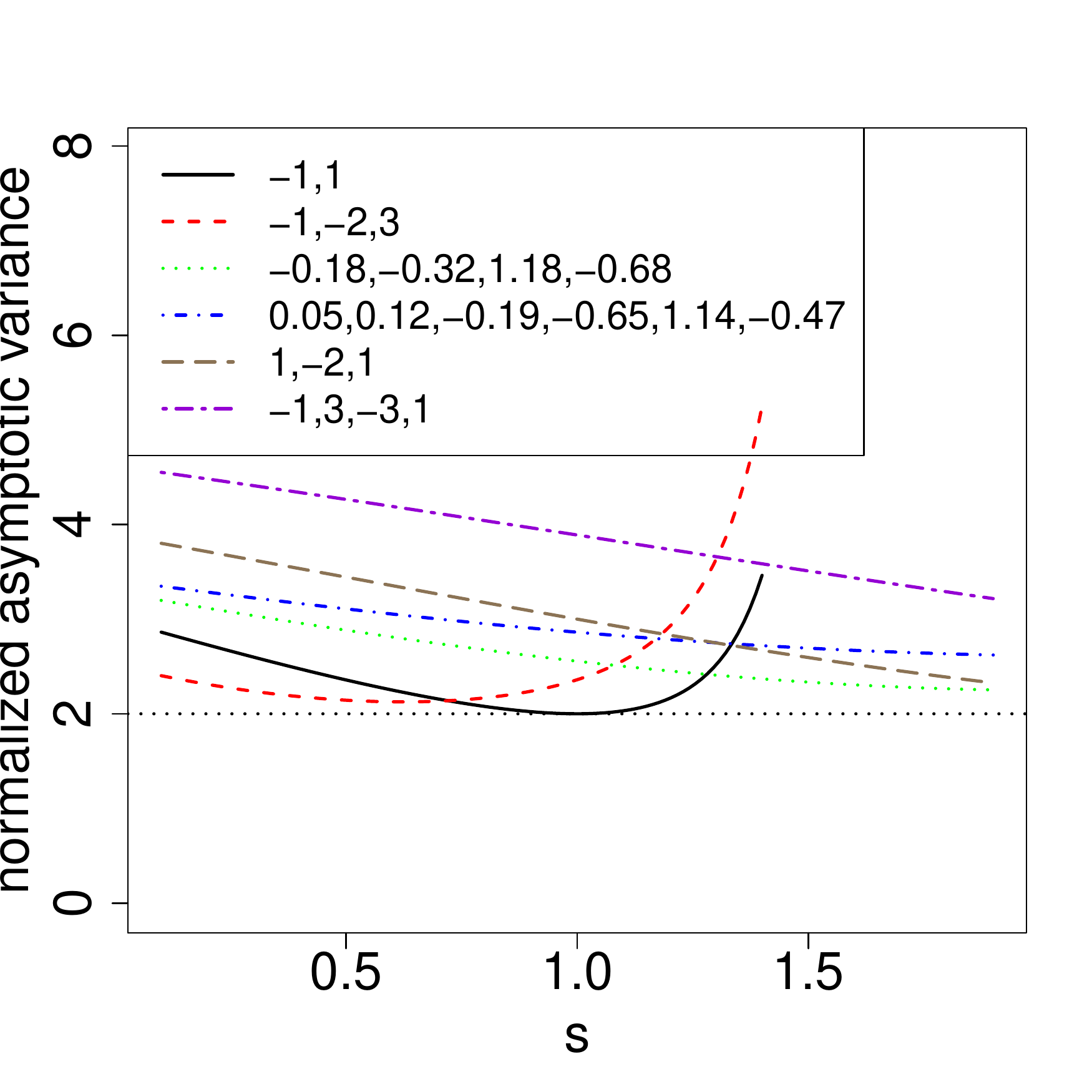}
\caption{Case $D=0$. Plot of the normalized asymptotic variance
$\widetilde{v}_{a,s}$ of the quadratic $a$-variations estimator, as a function of $s$, for various sequences $a$. The legend shows the values $a_0,...,a_{l}$ of these sequences (rounded to two digits).
From top to bottom in the legend, the
 sequences are the elementary sequence of order $1$, the sequence $(-1,-2,3)$ which has order $1$, the d
Daubechies sequences of order $2$ and $3$ and the elementary sequences of orders $2$ and $3$. The horizontal line corresponds to the Cramér-Rao lower bound $2$. 
}
\label{fig:Dzero:var}
\end{figure}

Now, we consider aggregated estimators, as presented in Section \ref{ssec:aggregation}. A clear motivation for considering aggregation is that, in Figure \ref{fig:Dzero:var}, the smallest asymptotic variance $\widetilde{v}_{a,s}$ corresponds to different sequences $a$, depending on the values of $s$.

In Figure \ref{fig:Dzero:aggreg} left, we consider the case $D=0$ and  we use four sequences:  $a^{(1)}$,  $a^{(5)}$ $a^{(2)}$ and $a^{(6)}$. We plot their  corresponding asymptotic variances $\widetilde{v}_{a^{(i)},s}$ as a function of $s$, for $0.1 \leq s \leq 1.4$ as well as the variance of their aggregation. It is then clear that aggregation drastically improves each of the four original estimators. The asymptotic variance of the aggregated estimator is very close to the Cramér-Rao lower bound $2$ for all the values of $s$. 
In Figure \ref{fig:Dzero:aggreg} right,  we perform the same analysis but with  sequences of order larger than 1. The  four considered sequences are now $a^{(6)}$,  $a^{(2)}$ $a^{(3)}$ and $a^{(4)}$. The value of 
$s$ varies from  $0.1$ to $ 1.9$ 
Again, the aggregation  is clearly  the best. 

\medskip

Eventually, Figures \ref{fig:Dun:var} and \ref{fig:Dun:aggreg} explore the case $D=1$. Conclusions are similar.

 \begin{figure}
\centering
\begin{tabular}{cc}
\includegraphics[width=7cm]{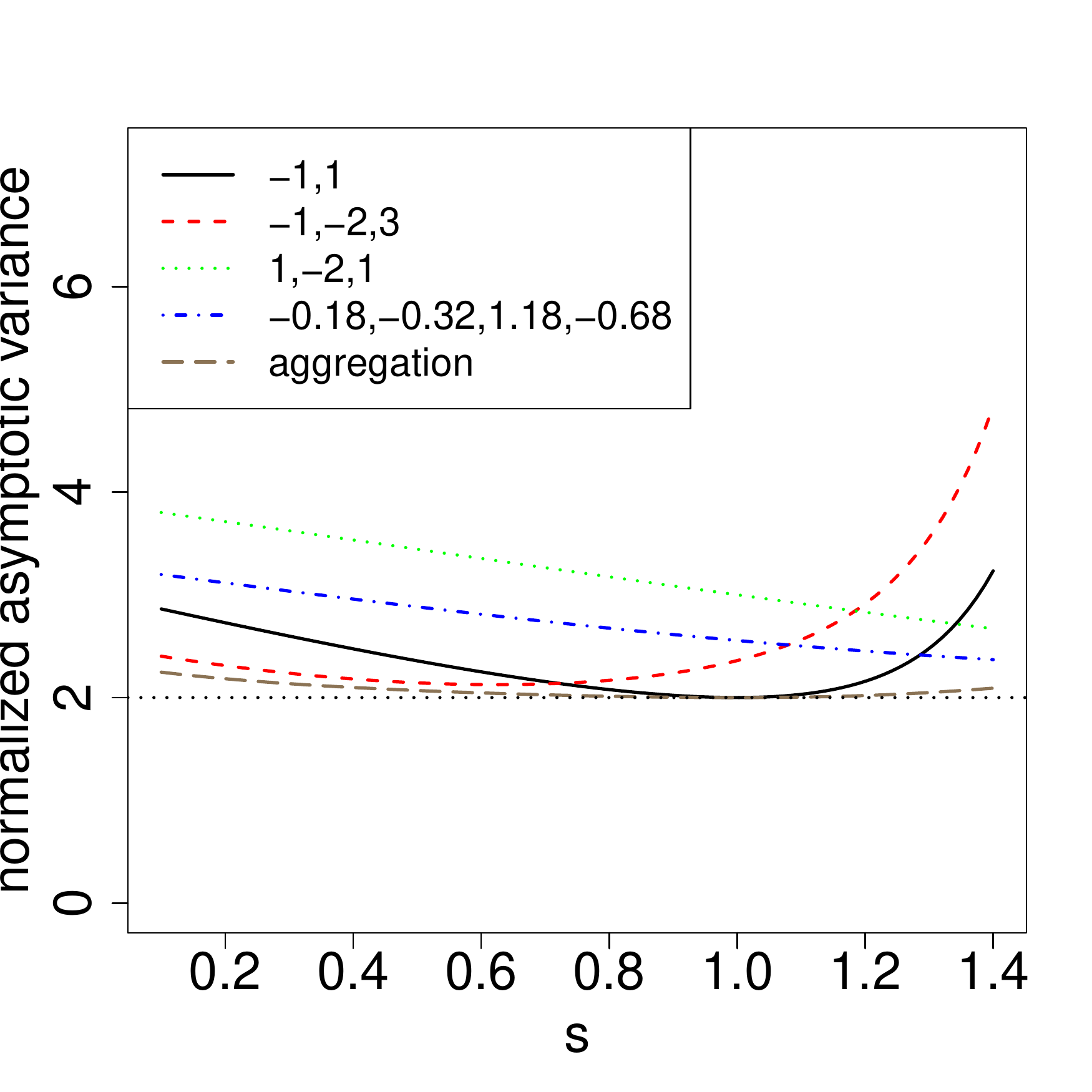}
&
\includegraphics[width=7cm]{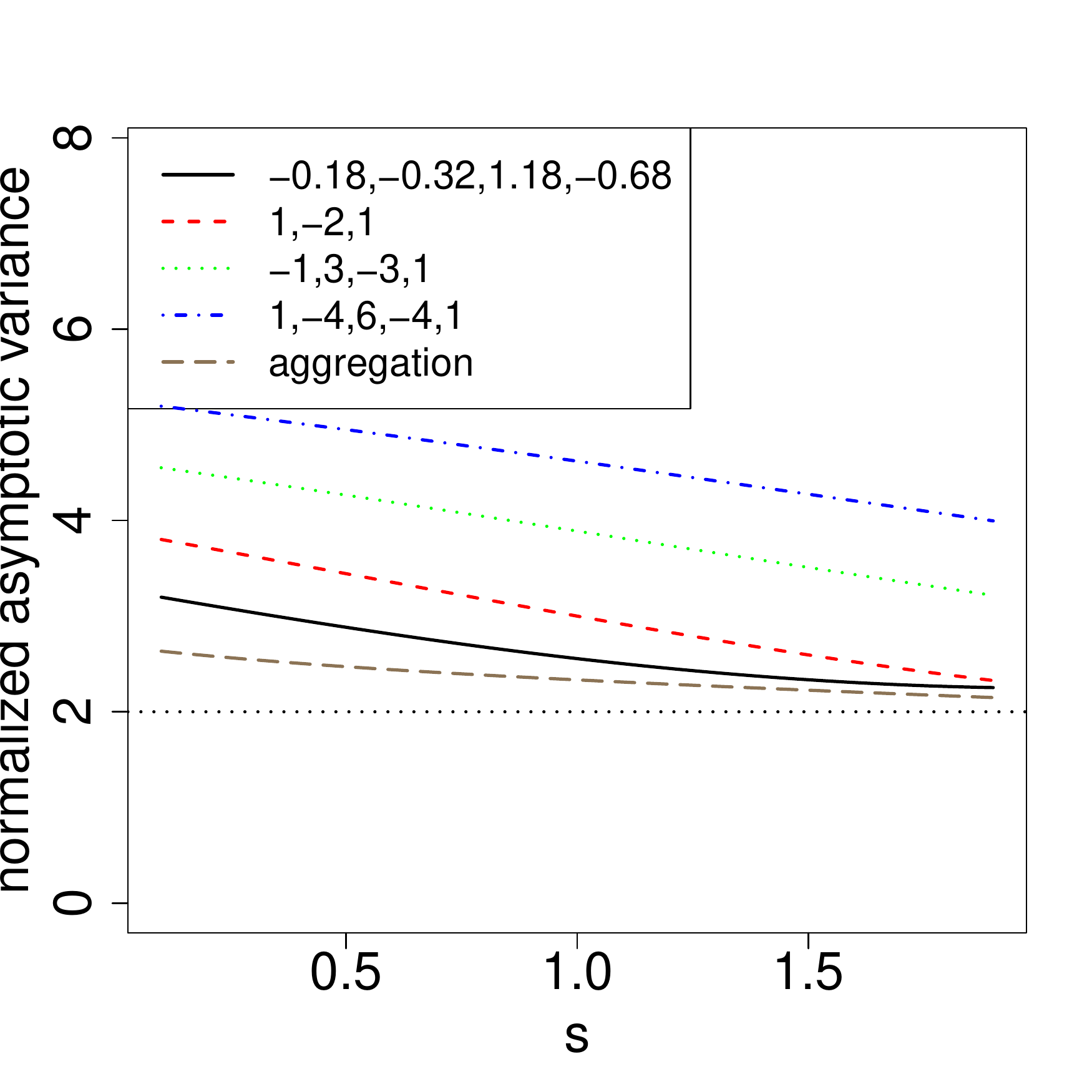}
\end{tabular}

\caption{Case $D=0$. Plot of the normalized asymptotic variance $\widetilde{v}_{a,s}$ of the quadratic $a$-variations estimator, as a function of $s$, for various sequences $a$ and for their aggregation.
On the left, including the order one elementary sequence, on the right without. 
The horizontal line corresponds to the Cramér-Rao lower bound $2$. }
\label{fig:Dzero:aggreg}
\end{figure}

%
%
\begin{figure}
\centering
\includegraphics[width=10cm]{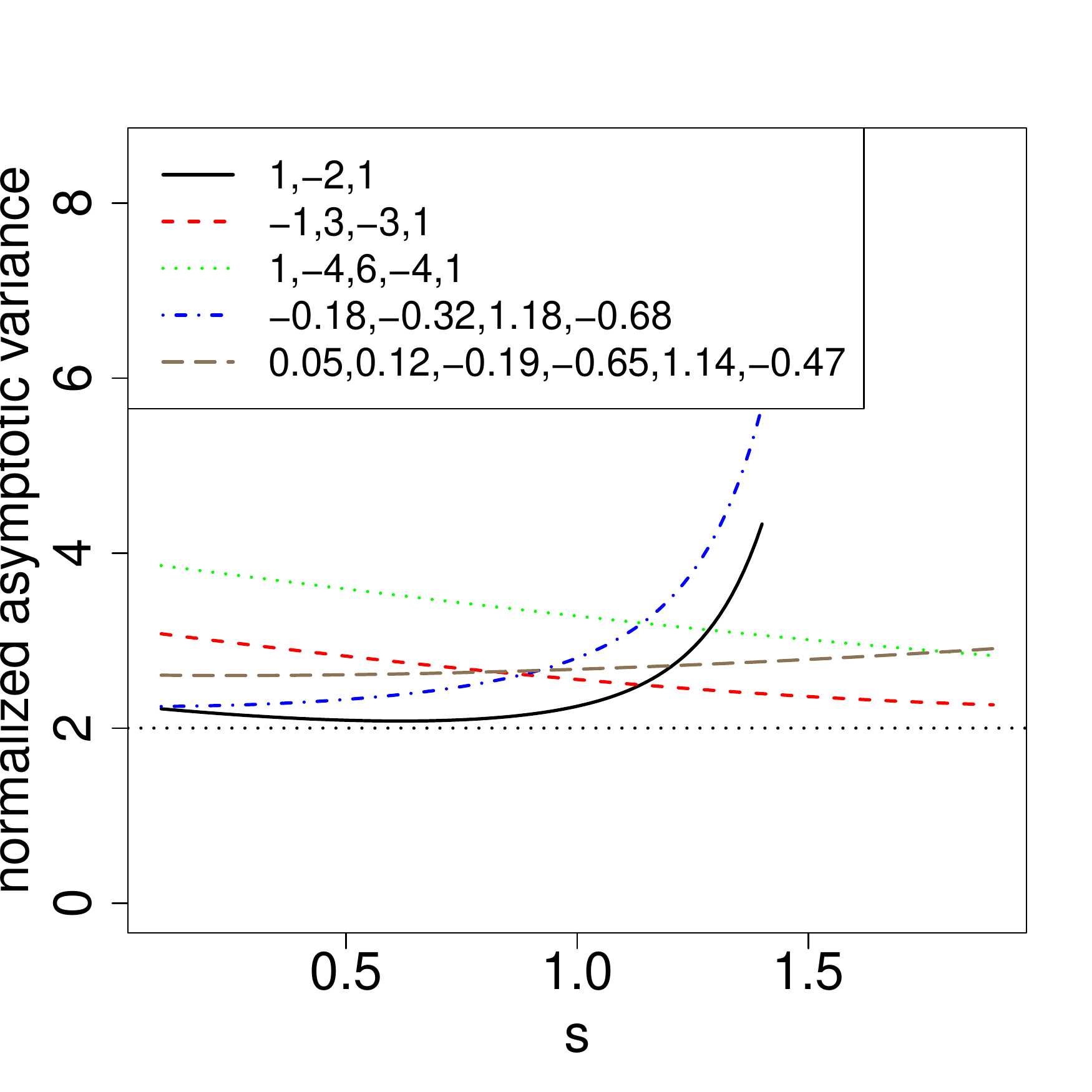}
\caption{Same setting as in Figure \ref{fig:Dzero:var} but for $D=1$. From top to bottom in the legend, the sequences are the elementary sequences of order $2$, $3$  and $4$ and the Daubechies sequences of order $2$ and $3$.}
\label{fig:Dun:var}
\end{figure}

%

\begin{figure}
\centering
\begin{tabular}{cc}
\includegraphics[width=7cm]{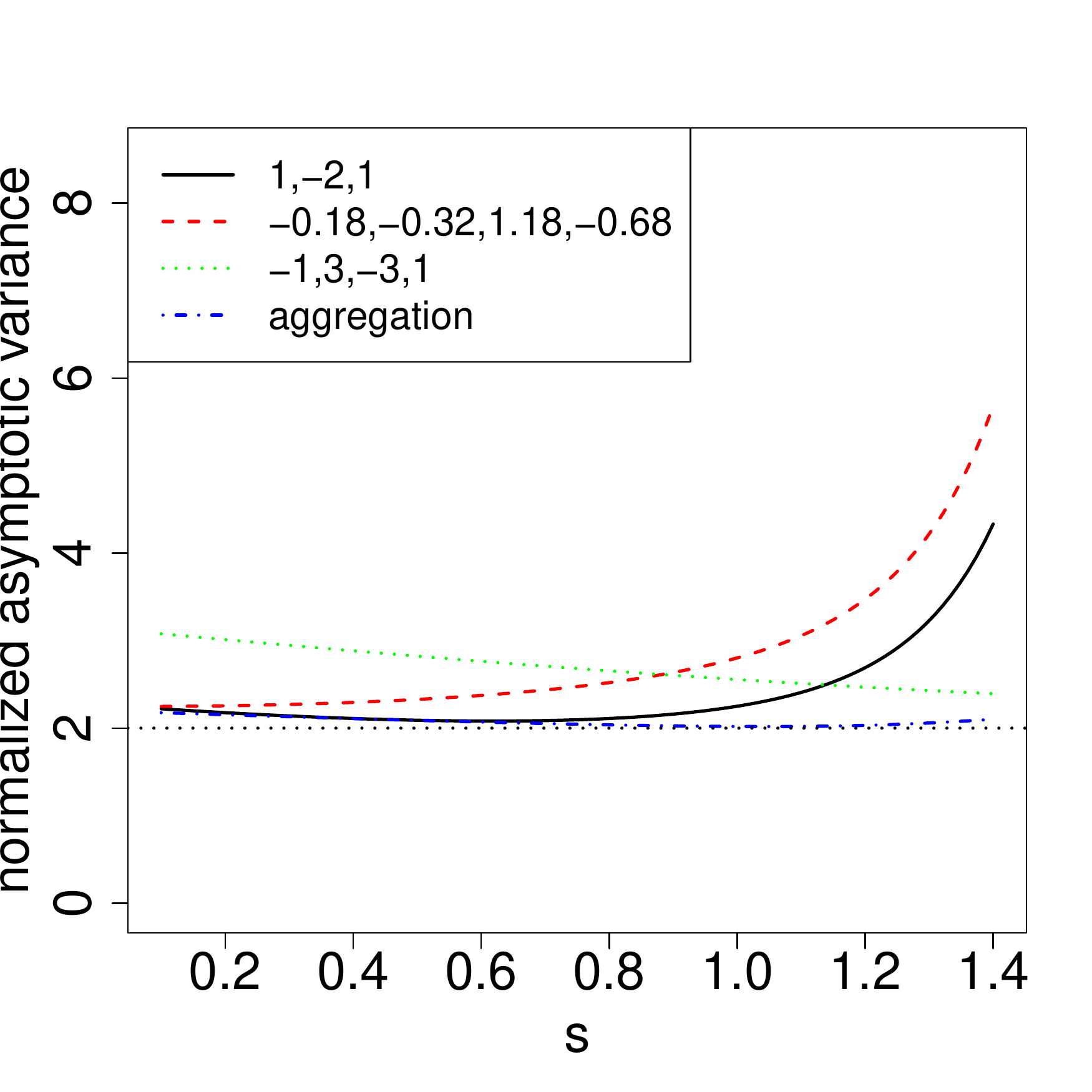}
&
\includegraphics[width=7cm]{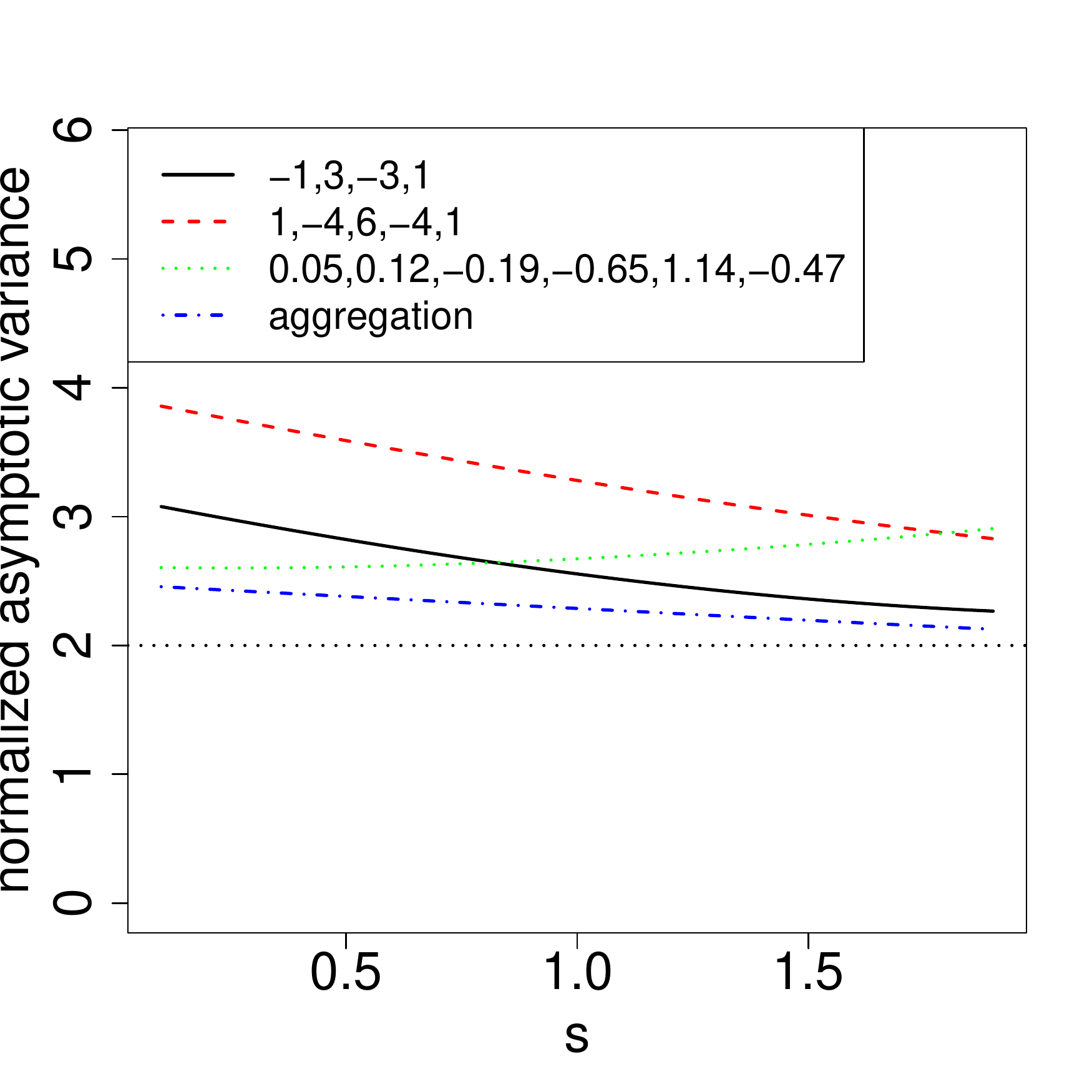}
\end{tabular}

\caption{Same setting as in Figure \ref{fig:Dzero:aggreg} but for $D=1$. On the left, from top to bottom in the legend, the sequences are the elementary sequence of order $2$, the Daubechies sequence of order $2$ and the  elementary sequence of order $3$.
On the right, from top to bottom in the legend, the sequences are the elementary sequences of orders $3$ and $4$ and the Daubechies sequence of order $3$.
}
\label{fig:Dun:aggreg}
\end{figure}

\subsection{Real data examples} \label{ssec:real:data}

In this section, we consider real data of spatially distributed processes in dimension two. In this setting, we extend the estimation procedure based on the quadratic $a$-variations that is then compared to the MLE procedure.

\subsubsection{A moderate size data set} \label{subsubsection:small:data:set}

We compare two methods of estimation of the autocovariance function of a separable Gaussian model on a real data set of atomic force spectroscopy\footnote{Personal communication from C. Gales and J. M. Senard.}.
The data consist of observations taken on a grid of step $1/15$ on $[0,1]^2$, so they consist of $256$ points of the form 
\[
X( i/15 , j/15 )
~ ~
i=0,\ldots,15,
~
j=0,\ldots,15.
\]

The first method is maximum likelihood estimation in a Kriging model, obtained from the function \verb#km# or the \verb#R# toolbox \verb#DiceKriging# \cite{roustant12dice}. For this method, the mean and autocovariance functions are assumed to be $\mathbb{E}(X(i/15,j/15)) = \mu$ and
\begin{equation} \label{eq:cov:km}
\mathrm{Cov} ( X(i/15,j/15) ,  X(i'/15,j'/15) )
=
\sigma^2
e^{- \theta_1 | i-i' |/15}
e^{- \theta_2 | j-j' |/15}.
\end{equation}
The parameters $\mu,\sigma^2,\theta_1,\theta_2$ are estimated by maximum likelihood.

The second method assumes the same autocovariance model \eqref{eq:cov:km} and consists in the following steps.
\begin{itemize}
\item[(1)] Estimate $\sigma^2$ by the sum of square
\[
\hat{\sigma}^2
=
\frac{1}{256}
\sum_{i,j=0}^{15}
( X(i/15,j/15)  - m )^2
\]
with $m = \sum_{i,j=0}^{15}
 X(i/15,j/15) $.
 \item[(2)] For each column $j$ of $[X(i/15,j/15)]_{i,j=0,\ldots,15}$, the vector of $16$ observations obey our model with $s=1$ and $C_1 = \sigma^2 \theta_1$. Hence, we can estimate $C_1$ by $\hat{C}_{1,j}$ with the estimator \eqref{eq:Can}, with the elementary sequence of order $1$. We thus obtain an estimate $\hat{C}_{1}$ by averaging the $\hat{C}_{1,j}$ for $j=0,\ldots,15$. 
  \item[(3)] We perform the same analysis row by row to obtain an estimate $\hat{C_2}$.
  \item[(4)] For $i = 1,2$, $\theta_i$ is estimated by $\hat{\theta}_i = \hat{C}_i / \hat{\sigma}^2$.
\end{itemize}

The first method, based on maximum likelihood, provides infinite values for $\theta_1$ and $\theta_2$, so that it considers the $256$ observed values as completely spatially independent. On the other hand, the second method provides the values $\hat{\theta}_1 = 14.72$ and $\hat{\theta}_2 = 15.73$. This corresponds to a correlation of approximately $1/e \approx 0.36$ between direct neighbors on the grid. Hence, the second method, based on our suggested quadratic variation estimator, is able to detect a weak correlation (that can be checked graphically), but not the maximum likelihood estimator.

\subsubsection{A large size data set} \label{subsubsection:large:data:set}

The second data set consists in a two-dimensional field of deformation amplitude, corresponding to the registration of two real images. The deformation field is obtained from the software presented in \cite{risser:11:adni}.
Figure \ref{fig:images:laurent} displays the two images to be registered and the deformation field. 

\begin{figure}

\centering
\begin{tabular}{ccc}
\includegraphics[width=5cm,height=5cm]{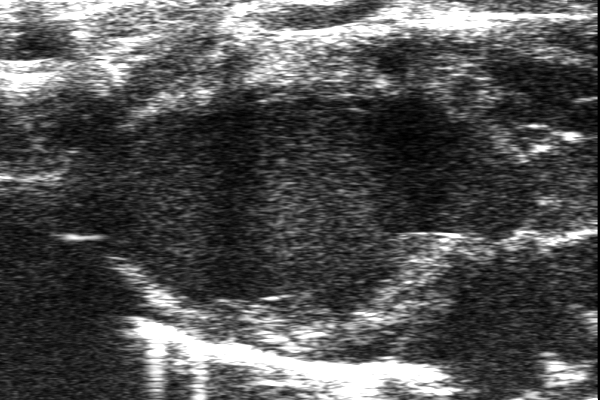}
&
\includegraphics[width=5cm,height=5cm]{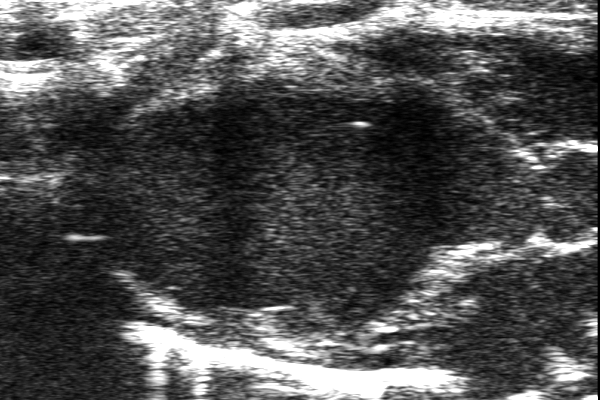}
&
\includegraphics[width=5cm,height=5cm]{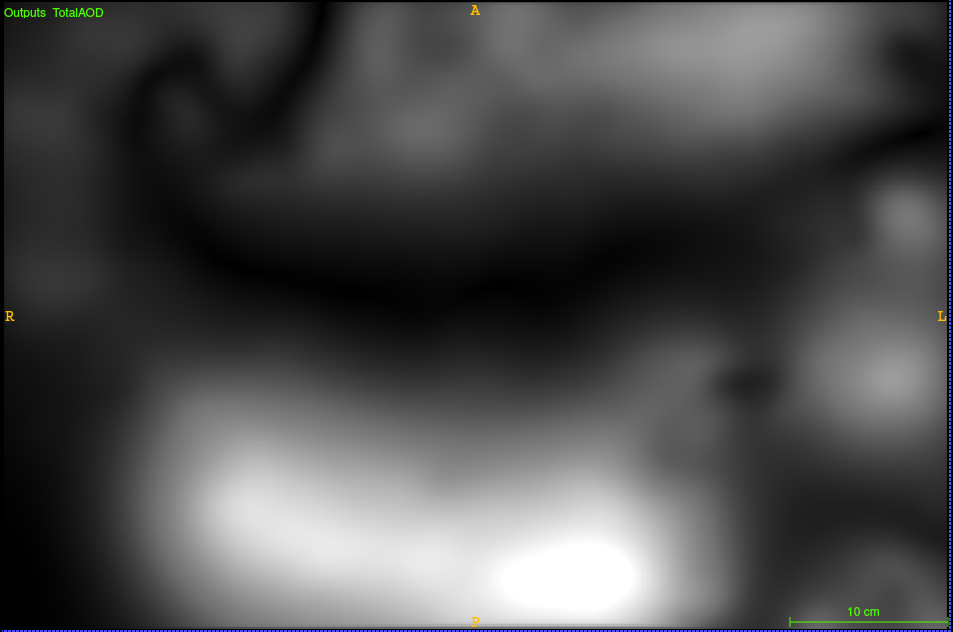}
\end{tabular}
\caption{For the data set of Section \ref{subsubsection:large:data:set}: the two images to be registrated (left and middle) and the field of deformation amplitude (right). On the right, light colors indicate large deformation amplitudes and dark colors indicate small deformation amplitudes.}
 \label{fig:images:laurent}
\end{figure}

After a subsampling of the field of deformation amplitude, the data consist of observations taken on a rectangular grid of steps $1/56$ and $1/59$ on $[0,1]^2$, so they consist of $3420$ points of the form 
\[
X( i/56 , j/59 )
~ ~
i=0,\ldots,56,
~
j=0,\ldots,59.
\]

With these data, we consider the same autocovariance model as in Section \ref{subsubsection:small:data:set}. We estimate the parameters $\theta_1$ and $\theta_2$ from the same two methods as in Section \ref{subsubsection:small:data:set}. The first method provides $\hat{\theta}_1 = 0.8770$ and $\hat{\theta}_2 = 0.6547$ and takes about 22 minutes on a personal computer. The second method provides $\hat{\theta}_1 = 0.607$ and $\hat{\theta}_2 = 0.107$ and takes about $0.05$ seconds on a personal computer. Hence, our suggested quadratic variation estimator provides a very significant computational benefit.

Both estimators conclude that the spatial correlation is more important along the $x$-axis that along the $y$-axis, which is graphically confirmed in Figure \ref{fig:images:laurent}. As in Section \ref{subsubsection:small:data:set}, the maximum likelihood estimator provides less correlation than the quadratic variation estimator.

Finally, if the field of deformation is considered with no preliminary subsampling, its size is $400 \times 600$. In this case, the MLE can not be directly implemented while the quadratic variation estimator can be.

\section{Conclusion}
\label{section:conclusion}

We have provided an in-depth analysis of the estimation of the scale parameter of a one-dimensional Gaussian process by quadratic variations. Indeed, the knowledge of this scale parameter is essential when studying a Gaussian process, as it enables to quantify its dependence structure, or to test  independence. 

We have addressed a semi-parametric setting, where no parametric family of variograms needs to be assumed to contain the unknown variogram. We have suggested an estimator, based on previous references, which numerical implementation is straightforward. Our theoretical analysis follows the principles of previous references, but is significantly simpler and holds under mild and simple to check technical assumptions. Based on this theoretical analysis, we have been able to tackle more advanced statistical topics, such as the aggregation of estimators based on different sequences, in the aim of improving the statistical efficiency.

Our analysis paves the way for further research topics. For instance, it would be interesting to estimate the variances and covariances of a set of quadratic variation estimators, in order to estimate the optimal aggregation of them.


\appendix

\section{Proofs}\label{sec:app}

\subsection{Proof of the results of Section \ref{ssec:res_a_var}}

\begin{proof}[Proof of Proposition \ref{prop:ifbm}] By the stochastic Fubini theorem,
 \begin{align*}
 B_s^{(-m)}(u_1) &=  \int_0 ^{u_1} du_2 \dots  \int_0 ^{u_m} du_{m+1}   \int_{-\infty} ^{u_{m+1}} dW(t) f_s( t,u_{m+1})\\
& =  
 \int_{-\infty}^{u_1}    dW(t)  \int_t ^{u_1} du_2 \dots   \int_t^{u_m}  du_{m+1} f_s( t,u_{m+1}) \\
& =:   \int_0 ^{u_1}   g_{m,s}  (u_1,t)dW(t) .
 \end{align*}
The positiveness  of $f_s(t,u)$ for  $u>0$ implies that  of $g_{m,s}(t,u)$. As a consequence,  for  $0<t_1<\dots t_k$, $B_s^{(-m)}(t_k) $ includes  a non-zero component:
\[
      \int_{t_{k-1} }^{t_k}  g_{m,s}  (u,t)dW(t),
\]
which is independent  of $(B_s^{(-m)}(t_1),\dots, B_s^{(-m)}(t_{k-1}))$ implying that $B_s^{(-m)}(t_k) $  is not collinear to this set of variables.  By induction,  this implies in turn that $B_s^{(-m)}(t_1),\dots ,B_s^{(-m)}(t_{k}) $ are not collinear.

\end{proof}

\begin{proof}[Proof of Lemma \ref{l:jma}] For $m=0$, we have 
\[
 \Var  \big( B_s^{(-0)}(u) -  B_s^{(-0)}(v) \big)
 = 2| u-v |^s
\]
so that the lemma holds with the convention $(s+1)\dots (s+0) = 1$. Thus we prove it by induction on $m$ and assume that it holds for $m \in \mathbb{N}$. We have, with  $K^{(-r)}(u,v)
 = 
 \mathbb{E}  \big[ B_s^{(-r)}(u)  B_s^{(-r)}(v) \big]$, for $r \in \mathbb{N}$,
 \begin{align*}
 K^{(-m)}(u,v)
 =  & \frac{1}{2}
 \left(
 \Var  \big( B_s^{(-m)}(u) -  B_s^{(-m)}(0) \big)
 +
 \Var  \big( B_s^{(-m)}(v) -  B_s^{(-m)}(0) \big)
 -
 \Var  \big( B_s^{(-m)}(u) -  B_s^{(-m)}(v) \big)
 \right)
 \\
 = &
  \psi(u) + \psi(v)
  - \frac{1}{2} \sum_{i=1}^{N_m}
 P^{m,i}(v) h_{m,i}(u)
    - \frac{1}{2} \sum_{i=1}^{N_m}
 P^{m,i}(u) h_{m,i}(v)  
 - \frac{1}{2} (-1)^m \frac{ 2|u-v|^{s+2m}}{
 (s+1)\dots (s+2m)},
 \end{align*}
 where $\psi$ is some function. Since we have $K^{(-(m+1))}(u,v) =  \int_{0}^u\int_{0}^v K^{(-m)}(x,y) dx dy$,
  \begin{align} \label{eq:Kmmp1:fbM}
 K^{(-(m+1))}(u,v)
 & =
  \sum_{i=1}^{\widetilde{N}_{m+1}}
 \widetilde{P}^{m+1,i}(v) \widetilde{h}_{m+1,i}(u)
  + \sum_{i=1}^{\widetilde{N}_{m+1}}
 \widetilde{P}^{m+1,i}(u) \widetilde{h}_{m+1,i}(v) \notag
 \\
 & 
 + (-1)^{m+1} \frac{1}{ (s+1)\dots (s+2m)} \int_{0}^v \left( \int_{0}^u |x-y|^{s+2m} dx \right) dy,
 \end{align}
 where $\widetilde{N}_{m+1} \in \mathbb{N}$, where for $i=1,...,\widetilde{N}_{m+1}$, $\widetilde{P}^{m+1,i}$ is a polynomial of degree less or equal to $m+1$ and $\widetilde{h}_{m+1,i}$ is some function. For $v \leq u$, we have
 \begin{align*}
 \int_{0}^v \left( \int_{0}^u |y-x|^{s+2m} dx \right) dy
 = &
  \int_{0}^v \left( 
  \int_{0}^y (y-x)^{s+2m} dx 
+  
 \int_{y}^u (x-y)^{s+2m} dx 
  \right) dy
  \\
   = &
  \int_{0}^v \left( 
  \frac{y^{s+2m+1} }{2m+1}
+  
  \frac{(u-y)^{s+2m+1} }{2m+1}
  \right) dy
  \\
  = &  
  \frac{v^{s+2m+2} }{(2m+1)(2m+2)}
  -
  \frac{(u-v)^{s+2m+2} }{(2m+1)(2m+2)}
  +
    \frac{u^{s+2m+2} }{(2m+1)(2m+2)}.
 \end{align*}
 By symmetry, we obtain, for $u,v \in \mathbb{N}$,
 \begin{equation} \label{eq:Impun:fbM}
 \int_{0}^u \left( \int_{0}^v |x-y|^{s+2m} dx \right) dy
 =
 \frac{u^{s+2m+2} }{(2m+1)(2m+2)}
  +
    \frac{v^{s+2m+2} }{(2m+1)(2m+2)}
    -
  \frac{|u-v|^{s+2m+2} }{(2m+1)(2m+2)}.
 \end{equation}
 Hence, from the relation 
\[
 \Var  \big( B_s^{(-(m+1))}(u) -  B_s^{(-(m+1))}(v) \big) = K^{(-(m+1))}(v,v) + K^{(-(m+1))}(u,u) - 2 K^{(-(m+1))}(v,u),
\] 
 \eqref{eq:Kmmp1:fbM}, and  \eqref{eq:Impun:fbM}, we conclude the proof of the lemma.
\end{proof}

\begin{proof}[Proof of Proposition \ref{prop:sumneq0}]
Using Lemma \ref{l:jma} (with $m=D$) and the  vanishing moments of  $a$ of order less or equal  than $D$, we have 

\begin{align*}
\sum_{k,l}a_ka_l \abs{k-l}^{2D+s} &=  (Const) (-1)^D\sum_{k,l}a_ka_l  \Var(  B_s^{(-D)}(k) -  B_s^{(-D)}(l))  \\
 &=(Const) (-1)^{D+1} \Var \left( \sum_k  a_k B_s^{(-D)}(k) \right).
 \end{align*}
We conclude using the ND property of the IFBM stated in Proposition \ref{prop:ifbm}. 
\end{proof}

\subsection{Preliminary results}

\begin{lemma}\label{lem:mehler} Let  $Z=(X,Y)$ be a centered Gaussian vector of dimension 2  then 
\[
 \Cov\left(X^2,Y^2\right)=2 \Cov^2\left(X,Y\right).
\]
\end{lemma}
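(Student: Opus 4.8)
\textbf{Plan for the proof of Lemma \ref{lem:mehler}.}

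The statement is the classical identity $\Cov(X^2, Y^2) = 2\Cov^2(X,Y)$ for a centered bivariate Gaussian vector $Z=(X,Y)$, so the plan is to reduce it to standard moment computations for jointly Gaussian variables. First I would write $\Cov(X^2,Y^2) = \E[X^2 Y^2] - \E[X^2]\E[Y^2]$ and note that everything is invariant under the rescaling $X \mapsto X/\sqrt{\Var(X)}$, $Y \mapsto Y/\sqrt{\Var(Y)}$ (both sides scale by $\Var(X)\Var(Y)$), so it suffices to treat the case where $X$ and $Y$ are standard normal with correlation $\rho = \E[XY]$. Then the claim becomes $\E[X^2 Y^2] = 1 + 2\rho^2$.

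To compute $\E[X^2 Y^2]$ in the standardized case, I would use the representation $Y = \rho X + \sqrt{1-\rho^2}\, W$ with $W \sim \mathcal{N}(0,1)$ independent of $X$. Expanding,
\[
\E[X^2 Y^2] = \E\!\left[X^2\big(\rho X + \sqrt{1-\rho^2}\,W\big)^2\right] = \rho^2 \E[X^4] + 2\rho\sqrt{1-\rho^2}\,\E[X^3]\E[W] + (1-\rho^2)\E[X^2]\E[W^2].
\]
Using $\E[X^4]=3$, $\E[X^3]=0$, $\E[X^2]=\E[W^2]=1$, this gives $3\rho^2 + 0 + (1-\rho^2) = 1 + 2\rho^2$, as desired. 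Alternatively one could invoke Isserlis'/Wick's theorem directly: $\E[X^2Y^2]$ is the sum over pairings of $\{X,X,Y,Y\}$, namely $\E[X^2]\E[Y^2] + 2\E[XY]^2$, which is exactly $\Var(X)\Var(Y) + 2\Cov(X,Y)^2$ without even needing the standardization step.

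There is essentially no obstacle here; the only thing to be a little careful about is the possibility that $\Var(X)=0$ or $\Var(Y)=0$, in which case $X$ or $Y$ is a.s.\ zero and both sides vanish trivially, so one may assume both variances positive before rescaling. Thus the cleanest write-up is: handle the degenerate case, then apply Isserlis' theorem (or the conditional representation above) to the four-dimensional pairing of $(X,X,Y,Y)$.
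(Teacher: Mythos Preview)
Your proposal is correct, and your Isserlis/Wick alternative is exactly the ``cumulant method'' the paper invokes (the paper's own proof is a one-line appeal to Mehler's formula and cumulants without spelling out the computation). Your conditional-representation argument is an equally valid elementary variant; either route suffices.
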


\begin{proof}[Proof of Lemma \ref{lem:mehler}]
This Lemma is a consequence of the so called Mehler formula \cite{AW09}. Its proof is immediate using the cumulant method.
\end{proof}

\begin{lemma}\label{lem:cov} Assume that $V$ satisfies $\left(\mathcal{H}_{0}\right)$, $\left(\mathcal{H}_{1}\right)$ and $\left(\mathcal{H}_{2}\right)$. One has,
when $M >D +s +1/4$,
	\[
\max_{i=1,\dots,n'}
	 \left(\sum_{i'= 1,\dots,n'}\abs{\Sigma_a(i,i')}\right)  = o\left(\Var(V_{a,n})^{1/2}\right).
	\]
\end{lemma}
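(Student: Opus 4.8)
\textbf{Proof proposal for Lemma \ref{lem:cov}.}

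The plan is to bound the left-hand side by a constant independent of $n$ (or at worst growing very slowly), and to compare it with the growth of $\Var(V_{a,n})^{1/2}$, which by Proposition \ref{prop:Van_Dqqe} is of exact order $n^{1/2}\delta_n^{2D+s}$ when $M > D+s/2+1/4$. First I would recall from \eqref{eq:prop3_aa} that $\Sigma_a(i,i') = \E[\Delta_{a,i}(X)\Delta_{a,i'}(X)] = -\Delta_{a*a,i-i'}(V)$, so the entries of $\Sigma_a$ depend only on the difference $i-i'$ and equal (up to sign) the discrete $a^{2*}$-difference of $V$ evaluated with lag $i-i'$. Hence
\[
\max_{i=1,\dots,n'}\sum_{i'=1,\dots,n'}\abs{\Sigma_a(i,i')} \leq \sum_{j\in\Z}\abs{\Delta_{a^{2*},j}(V)}.
\]
So the task reduces to showing that the series $\sum_{j\in\Z}\abs{\Delta_{a^{2*},j}(V)}$ converges (with a bound uniform in $n$, or at most logarithmically growing), and then observing that this is $o(n^{1/2}\delta_n^{2D+s})$ since $n^{1/2}\delta_n^{2D+s} = n^{1/2-\alpha(2D+s)} \to \infty$ is false in general — wait, one must be careful here: in the infill case $\delta_n = 1/n$, so $n^{1/2}\delta_n^{2D+s} = n^{1/2-(2D+s)} \to 0$. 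Thus the correct comparison is that $\sum_j \abs{\Delta_{a^{2*},j}(V)} = O(\delta_n^{2D+s})$ should hold, up to a factor that is $o(n^{1/2})$; I would make this precise by extracting the factor $\delta_n^{2D+s}$ from the leading behavior of $\Delta_{a^{2*},j}(V)$.

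The key step is a Taylor-expansion/Abel-summation analysis of $\Delta_{a^{2*},j}(V)$ using the machinery of \eqref{def:R}: write $V^{(2D)}$ via its expansion \eqref{e:cov} as $V^{(2D)}(0) + C(-1)^D\abs{h}^s + r(h)$, and use that $a^{2*}$ has vanishing moments up to order $2M-1 > 2D$ to kill the polynomial part and reduce $\Delta_{a^{2*},j}(V)$ to a combination of $\delta_n^{2D}\,R(j,\delta_n,2D,\abs{\cdot}^s,a^{2*})\cdot C$ plus a remainder term governed by $r$. For the main term, the scaling $R(j,\delta_n,2D,\abs{\cdot}^s,a^{2*}) = \delta_n^{s}R(j,1,2D,\abs{\cdot}^s,a^{2*})$ together with the decay of $R(j,1,2D,\abs{\cdot}^s,a^{2*})$ in $j$ — which decays like $\abs{j}^{2D+s-2M}$ as $\abs{j}\to\infty$ by repeated summation by parts, hence is summable precisely because $2M - (2D+s) > 1/2 > 0$, in fact $> 1$ once $M \geq D+1$ (the borderline $M=D+1$, $s$ near $2$ being the delicate case already flagged in Remark \ref{rem:cas_pourris}) — gives $\sum_{j}\abs{\Delta_{a^{2*},j}(V)} = O(\delta_n^{2D+s})$ for the main part. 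For the remainder part involving $r$, I would invoke $\left(\mathcal{H}_{2}\right)$: the bound on $r^{(2)}$ (or $r^{(3)}$) of order $\abs{h}^\beta$ with $\beta < -1/2$, fed through the integral representation \eqref{def:R} of $R$, yields a contribution to $\sum_j \abs{\cdot}$ that is $o(\delta_n^{2D+s})$ or at least $O(\delta_n^{2D+s})$ times a harmless factor; here the hypothesis $M > D+s+1/4$ (stronger than the $M > D+s/2+1/4$ needed elsewhere) is exactly what is used to guarantee enough vanishing moments so that the $r$-term is controlled and the sum over $j$ converges.

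Putting these together, $\max_i \sum_{i'}\abs{\Sigma_a(i,i')} = O(\delta_n^{2D+s})$, whereas $\Var(V_{a,n})^{1/2}$ is of order $n^{1/2}\delta_n^{2D+s}$ by \eqref{eq:var_van_Dqqe} (the series there being strictly positive and finite under the stated hypotheses, and the analogous series for $\sum_j R^2$ converging since $M > D+s/2+1/4$). The ratio is therefore $O(n^{-1/2}) \to 0$, which is the claim.

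\textbf{Main obstacle.} The delicate point will be obtaining the sharp decay rate $\abs{j}^{2D+s-2M}$ for $R(j,1,2D,\abs{\cdot}^s,a^{2*})$ as $\abs{j}\to\infty$, with a clean constant, via iterated summation by parts using the vanishing moments of $a^{2*}$; and then carefully tracking the $r$-remainder through the integral form \eqref{def:R} so that the $\left(\mathcal{H}_{2}\right)$ bound with exponent $\beta < -1/2$ indeed produces a term summable in $j$ and of the right order in $\delta_n$. I expect this is where the extra margin $M > D+s+1/4$ (as opposed to $M > D+s/2+1/4$) is consumed, so I would be careful to check that the condition as stated suffices and is used correctly.
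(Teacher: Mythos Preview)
Your approach is essentially the same as the paper's: reduce to $\sum_{i=0}^{n'-1}|\Sigma_a(1,1+i)|$ by stationarity, split $\Sigma_a(1,1+i)=\delta_n^{2D}R(i,\delta_n,2D,V^{(2D)},a^{2*})$ into the contribution of $C(-1)^D|h|^s$ and that of $r$, and compare with $\Var(V_{a,n})^{1/2}\sim n^{1/2}\delta_n^{2D+s}$.

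Two points need correction. First, for the remainder term, $(\mathcal{H}_2)$ only gives $|R(i,\delta_n,2D,r,a^{2*})|\leq(Const)\,\delta_n^{d+\beta}|i|^{\beta}$ with $\beta<-1/2$, not $\beta<-1$; so the series $\sum_i|i|^{\beta}$ need \emph{not} converge, and your claim that the full row sum is $O(\delta_n^{2D+s})$ is too strong. The paper does not attempt this: it bounds the \emph{finite} sum $\sum_{i=0}^{n'-1}$, obtaining a contribution of order $\delta_n^{2D+d+\beta}(n^{1+\beta}+1)$, and then checks directly that this is $o(n^{1/2}\delta_n^{2D+s})$ using $\beta<-1/2$ and $d+\beta>s$. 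Your ``harmless factor'' must therefore be allowed to grow like $n^{1/2-\varepsilon}$, and the final comparison gives $o(1)$, not $O(n^{-1/2})$.

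Second, you have the role of the moment condition backwards. The hypothesis on $M$ is used for the \emph{main} term, not the remainder: summing the bound $|R(i,1,2D,|\cdot|^s,a^{2*})|\leq(Const)\,|i|^{s-2(M-D)}$ up to $n'-1$ gives at worst $n^{s-2(M-D)+1}$, and one needs this to be $o(n^{1/2})$, i.e.\ $M>D+s/2+1/4$. (The stated $M>D+s+1/4$ is stronger than what the argument actually requires.) The $r$-term only needs $2D+d\leq 2M$ so that the Taylor expansion to order $2D+d$ is legitimate; no extra margin is consumed there.
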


\begin{proof}[Proof of Lemma \ref{lem:cov}]    Using the stationary increments of the process, one has
\begin{align} \label{e:z:1}
\max_{i=1,\dots,n'}
	 \left(\sum_{i'= 1,\dots,n'}\abs{\Sigma_a(i,i')}\right)  
\leq  2\sum_{i=0}^{n'-1} \abs{\Sigma_a(1,1+i)}.
\end{align}
Recall that 
\[
\Sigma_a(1,1+i) =\Cov \left(\Delta_{a,1}(X), \Delta_{a,1+i}(X) \right) 
= -\Delta_{a^{2*},i}(V) =
\delta_n^{2D}  R(i,\delta_n,2D,V^{(2D)},a^{2*}).
\]
We have seen in  the proof of  Proposition \ref{prop:Van_Dqqe} (\eqref{e:a:0} and \eqref{e:a:1}) 
that  for $i$ sufficiently large 
\[
R(i,\delta_n,2D,V^{(2D)},a^{2*}) \leq (Const) \big(\delta_n^s i^{s-2(M-D)} + \delta_n^{d+\beta} i^\beta\big).
\]
 Thus the sum in \eqref{e:z:1} is bounded by
\[
 (Const) \delta_n^{2D+s} (n^{s-2(M-D) +1} + 1)  + (Const)  \delta_n^{2D+d+\beta}  (n^{1+\beta}+1).
\]
On  the other hand, we have proved also in the proof of Proposition \ref{prop:Van_Dqqe} that 
\[
\Var (V_{a,n}) ^{1/2} = (Const)  n^{1/2}   \delta_n^{2D+s}(1+o(1))
\]
giving the result. Thus, one has to check that
\[
\delta_n^{2D+s} n^{s-2(M-D) +1}, \quad \delta_n^{2D+s}, \quad \delta_n^{2D+d+\beta} n^{1+\beta}, \quad \textrm{and} \quad  \delta_n^{2D+d+\beta} 
\]
are $o(n^{1/2}\delta_n^{2D+s})$ which is true by the assumptions made. We skip the details.
\end{proof}

\subsection{Proof of the main results}

\begin{proof}[Proof of Proposition \ref{prop:Van_Dqqe}] {\bf 1)}
By definition of $V_{a,n}$ in \eqref{def:Van} and  identity \eqref{eq:prop3_aa},  we get
\begin{align}\label{e:zaza1}
\E[V_{a,n}] = &n' \E [\Delta_{a,i}(X)^2 ] = -n' \Delta_{a^{2*},0}(V) 
= - n'\sum_{j} a_j^{2*} V(j\delta_n).
\end{align}
Recall that $n' =n-L +1$ is the size of the vector $\mathbf{\Delta_a}(X)$. In all the proof, $j$ is assumed to vary from $-L+1$ to $L-1$. 
We use a Taylor expansion of $V((i +j)\delta_n)$  at $ (i\delta_n)$ and of order $q \leq 2D$: 
\begin{align}\label{e:zaza2}
V((i+j)\delta_n)=&V(i\delta_n)+\dots + 
\frac{(j\delta_n)^{q-1}}{(q-1)!}V^{(q-1)}(i \delta_n)
+(j\delta_n)^{q}\int_0^1 \frac{(1-\eta)^{q-1}}{(q-1)!}V^{(q)}((i +j\eta)\delta_n)d\eta.
\end{align}
Note that this  expression  is "telescopic"  in the sense 
 that  if $q < q' \leq 2D$,
 \begin{align}  \label{e:zaza3}
& (j\delta_n)^{q}\int_0^1 \frac{(1-\eta)^{q-1}}{(q-1)!}V^{(q)}((i +j\eta)\delta_n)d\eta \notag
\\
& =
 \frac{(j\delta_n)^{q}}{(q)!}V^{(q)}(i \delta_n)+ \dots + \frac{(j\delta_n)^{q'-1}}{(q'-1)!}V^{(q'-1)}(i \delta_n)
 +  (j\delta_n)^{q'}\int_0^1 \frac{(1-\eta)^{q'-1}}{(q'-1)!}V^{(q')}((i +j\eta)\delta_n)d\eta.
\end{align}

Combining \eqref{e:zaza2} (with $i=0$ and $q=2D$), the vanishing moments of the sequence $a^{2*}$ and $\left(\mathcal{H}_{1}\right)$ yields:
\begin{align*}
\E[V_{a,n}] 
=& n'\delta_n^{2D} R(0,\delta_n,2D,V^{(2D)},a^{2*})\\
=& n'C (-1)^D \delta_n^{2D+s} R(0,1,2D,\abs{\cdot{}}^s,a^{2*}) +n' \delta_n^{2D} R(0,\delta_n,2D,r,a^{2*}).
\end{align*}
The first term is non-zero by   \eqref{eq:sumneq0} in Proposition \ref{prop:sumneq0} and a dominated  convergence argument together with $\left(\mathcal{H}_{1}\right)$ shows that the last term is $o(\delta_n^{2D+s})$ giving \eqref{eq:esp_van_Dqqe}.

\medskip

{\bf 2)} Using Lemma \ref{lem:mehler}, \eqref{e:zaza2} with $q=2D$, the fact that $D\leq M$, and the vanishing moments of the sequence $a^{2*}$, we obtain
 \begin{align*}
\Var (V_{a,n}) 
 = & 2\sum_{i,i' =1} ^{n'}\Cov^2 \left(\Delta_{a,i}(X), \Delta_{a,i'}(X) \right)  
 =  2\sum_{i,i' =1} ^{n'} \left(-\Delta_{a^{2*},i-i'}(V)\right)^2
 =  2\sum_{i=-n'+1}^{n'-1} (n'-\abs{i})  \Delta_{a^{2*},i}(V)^2\\
 = & 2\delta_n^{4D}\sum_{i=-n'+1}^{n'-1} (n'-\abs{i})   R^2(i,\delta_n,2D,V^{(2D)},a^{2*})\\
 = & 2 \delta_n^{4D}\sum_{i=-n'+1}^{n'-1} (n'-\abs{i})  \left( C(-1)^D \delta_n^s R(i,1,2D,\abs{\cdot{}}^{s},a^{2*}) + R(i,\delta_n,2D,r,a^{2*})\right)^2.\\
 =:& A_n +B_n +C_n,
\end{align*}
where $B_n$ comes from  the double product.

\medskip

(i) We show that $A_n$ converges. Indeed,    
\begin{align*} 
A_n&= 2C^2 \delta_n^{4D}\sum_{i=-n'+1}^{n'-1} (n'-\abs{i})  \delta_n^{2s} R^2(i,1,2D,\abs{\cdot{}}^{s},a^{2*}) = 2C^2 n' \delta_n^{4D+2s} \sum_{i\in \Z} f_n(i),
 \end{align*}
 with
\[
f_n(i)\defeq \frac{n'-\abs{i}}{n'}  R^2(i,1,2D,\abs{\cdot{}}^{s},a^{2*})\ind_{\abs{i}\leq n'-1}.
\]
Since $f_n(i) \uparrow  R^2(i,1,2D,\abs{\cdot{}}^{s},a^{2*})$ for fixed $i$ and $n'$ going to infinity, it suffices to study  the series
\[
\sum_{i\in \Z} R^2(i,1,2D,\abs{\cdot{}}^{s},a^{2*}).
\]
Using \eqref{e:zaza3} , with $q'  = 2M$ ,  $\abs{\cdot{}}^{s}$ instead of $V^{(2D)}$ and $\delta_n =1$, and using the vanishing moments of the sequence $a^{2*}$, we get, for $i$ large enough so that  $i$ and $i+j$ always have the same sign in the sum below, 
\[
R(i,1,2D,\abs{\cdot{}}^{s},a^{2*})=
R(i,1,2M,g,a^{2*}) = 
 - \sum_{j} a^{2*}_j j^{2M} \int_0^1 \frac{(1-\eta)^{2M-1}}{(2M-1)!} g((i+j\eta))d\eta,
\]
where $g$ is the $2(M-D)$-th derivative of $\abs{\cdot{}}^{s}$ (defined on $\mathbb{R} \setminus \{0\}$).
For $i$ sufficiently large, $g(i +j \eta)$ is  bounded by $(Const) |i| ^{s-2(M-D)}$ so that 
\begin{equation}\label{e:a:0}
R^2(i,1,2D,\abs{\cdot{}}^{s},a^{2*}) \mbox{ is bounded by } (Const) i ^{2(s-2(M-D))},
\end{equation}
which is the general term of a convergent series.

%

\noi\\
(ii) Now we show that  the term $C_n$  is negligible compared to $A_n$. This will imply in turn that $B_n$ is negligible compared to $A_n$, from the Cauchy-Schwarz inequality. We have to give bounds  to the series with general term $R^2(i,\delta_n,2D,r, a^{2*})$ with 
\begin{align*}
R(i,\delta_n,2D,r, a^{2*})&=-
\sum_{j}  a_j^{2*} j^{2D} 
\int_0^1 \frac{(1-\eta)^{2D-1}}{(2D-1)!} r\left((i+j\eta)\delta_n\right) d\eta.
\end{align*}
For fixed $i$, the assumptions \eqref{e:cov} on $r$ in $\left(\mathcal{H}_{1}\right)$ are sufficient to build a dominated convergence argument  to  prove that $R^2(i,\delta_n,2D,r, a^{2*}) = o( \delta_n^{2s})$ which leads to the required result. So we concentrate our attention on indices $i$ such that $|i| > 2L$. Now we use $(\mathcal{H}_2)$ and the notation $d=2$ if $s<3/2$ and $d=3$ if $s \geq 3/2$. Consider $\beta$ as in $(\mathcal{H}_2)$ and remark that, in the mixed situation, we have $  s-d < \beta < -1/2 $. In the infill situation, it is assumed that for $|h| <1$, $|r^{(d)}|(h) \leq (Const) |h|^{\beta}$ with $\beta < -1/2$. Since $h$ is restricted to $[-1,1]$, we may also consider without loss of generality that $\beta$ has been chosen such that $  s-d < \beta < -1/2 $. Using \eqref {e:zaza3}  as  in the proof of item \tbf{1)}, if $2D+d\leq 2M$, one gets
\begin{align*}
R(i,\delta_n,2D,r, a^{2*})&=-
\sum_{j}  a_j^{2*} j^{2D+d}  \delta_n^d
\int_0^1 \frac{(1-\eta)^{2D+d-1}}{(2D+d-1)!} r^{(d)}\left((i+j\eta)\delta_n\right) d\eta.
\end{align*}
 The condition $|i| > 2L$ ensures that the integral is always convergent. Then we have
\begin{equation}\label{e:a:1}
 R^2(i,\delta_n,2D,r, a^{2*}) \leq (Const)   \delta_n^{2d+2\beta}  i^{2\beta}.
\end{equation}
 Since $\beta <-1/2$ , the series in $i$ converges and the contribution to $C$ of the indices $i$ such that 
  $|i| > 2L$  is bounded by
 $
 (Const)  \delta_n ^{4D +2d +2\beta}$ which is negligible compared  to $ \delta_n ^{4D +2s}$ since 
 $d +\beta >s$.  
\end{proof}

\begin{proof}[Proof of Theorem \ref{th:CLT_Van}] By a diagonalization argument,  $V_{a,n}$ can be written as

\[
V_{a,n} = \sum_{i=1}^{n''} \lambda_i Z_i^2,
\]
where $\lambda_1,\dots,\lambda_{n''}$ are the non-zero eigenvalues of variance-covariance matrix $\Sigma_a$ of $\mathbf{\Delta_a}(X)$ and the  $Z_i$ are  independent and identically distributed  standard Gaussian variables. 
Hence,
\begin{equation} \label{eq:CLT:Van}
\frac{ V_{a,n} - \mathbb{E}(V_{a,n}) }{ \sqrt{ \Var(V_{a,n}) } }
=
\sum_{i=1}^{n''}
 \frac{
 \lambda_i
 }{
 \sqrt{ \sum_{r=1}^{n''} \lambda_r^2 } 
   } (Z_i^2 - 1).
\end{equation}
In such a situation, Lemma 2 in \cite{IL97} implies that the Lindeberg condition
is a  sufficient  condition required to prove the central limit theorem and is equivalent to 
\begin{align*}
\max_{i=1,\dots,n''} | \lambda_i |
=
o\left( \sqrt{ \Var( V_{a,n} ) } \right).
\end{align*} 
From Lemma \ref{lem:cov}, one has 
\[
\max_{i=1,\dots,n''}
\left(
\sum_{j=1}^{n''}
| \Sigma_{a}(i,j) |
\right)
=
o\left(  \sqrt{ \sum_{r=1}^{n''} \lambda_r^2 }  \right)
\]
and the result follows using the following classical linear algebra result (see for instance \cite[Ch. 6.2, p194]{Luenberger79})
\begin{align*}
\max_{i=1,\dots,n''} | \lambda_i |
& \leq
\max_{i=1,\dots,n'}
\left(
\sum_{j=1}^{n'}
| \Sigma_{a}(i,j) |
\right).
\end{align*}
\end{proof}

\begin{proof}[Proof of Corollary \ref{cor:CLT_Van_joint}]
To prove the asymptotic  joint normality it is sufficient to prove the asymptotic normality  of any non-zero linear  combination
\[
LC(\gamma)=
\sum_{j=1}^k
\gamma_j
V_{a^{(j)},n},
\]
where $\gamma_j\in \R$ for $j=1,\dots,k$. We have again the representation 
\[
LC(\gamma) =  \sum_{i=1}^{n''} \lambda_i Z_i^2,
\] where the $\lambda_i$'s are now the non-zero eigenvalues of the variance-covariance matrix
\[
\sigma' =
\sum_{j=1}^k
\gamma_j   \Sigma_{a^{(j)},n},
\] and the $Z_i$'s are as before. The Lindeberg condition has the same expression.
On one hand, as $n$ goes to infinity,
\[
\frac{1}
{n\delta_n^{4D+2s}}  \sum_{i=1}^{n''} \lambda_i  \to \gamma^\top \Lambda_\infty  \gamma
\]
where $^{\top}$ stands for the transpose.  On the other  hand,  by the triangular inequality for the operator norm (which is the maximum of the $|\lambda_i|$'s), one gets
 \[
 \max_{i=1,\dots,n''} | \lambda_i |  =   \|\sigma'\|_{op}
 \leq \sum_{j=1}^k
\gamma_j   \|\Sigma_{a^{(j)},n}
\|_{op}.
\]
In the proof of Theorem \ref{th:CLT_Van}, we have established that $ \|\Sigma_{a^{(j)},n}
\|_{op} = o( n^{1/2}\delta_n^{2D+s})$ leading to the result. 
\end{proof}

\subsection{Proof of the remaining results in Section \ref{sec:quad_a_var}}

\begin{proof}[Proof of Theorem \ref{th:CLT_Can}]
We use the definition of $C_{a,n}$ and the following decomposition:
\begin{align*}
\frac{C_{a,n} -C}{\sqrt{\Var(C_{a,n})}} =\frac{C_{a,n} -\E[C_{a,n}]}{\sqrt{\Var(C_{a,n})}} + \frac{\E[C_{a,n}]-C}{\sqrt{\Var(C_{a,n})}} =
\frac{V_{a,n} -\E[V_{a,n}]}{\sqrt{\Var(V_{a,n})}} + \frac{\E[C_{a,n}]-C}{\sqrt{\Var(C_{a,n})}}.
\end{align*}
Following the proof of Proposition \ref{prop:Van_Dqqe}, the second term is proportional to 
\[
\sqrt n \delta_n^{-s} R(0,\delta_n,2D,r,a^{2*})=-\sqrt n \delta_n^{-s} \sum_{i} a^{2*}_i i^{2D}\int_0^1 \frac{(1-\eta)^{2D-1}}{(2D-1)!}r(i\eta \delta_n) d\eta
\]
which is negligible compared to $(Const) n^{1/2+\alpha s - \alpha (s+1/2 \alpha)} $ by 
$\left(\mathcal{H}_{3}\right)$ and thus goes
to 0 as $n$ goes to infinity. Then Slutsky's lemma and Theorem \ref{th:CLT_Van} lead straightforwardly to the required result.
\end{proof}

\begin{proof}[Proof of Corollary \ref{cor:cr}]
Obviously, one has   
\[
V_{a,n}^X = \| \mathbf{\Delta_{a}} (X) \|^2 = 
\|\mathbf{\Delta_{a}} ( f)  +\mathbf{\Delta_{a}} (\overline X) \|^2.
\]
Using the triangular  inequality $ \|A+B\|^2-\| A\|^2 \leq \| B\|^2 + 2\| A\| \| B\|$, it suffices  to have 
$\|\mathbf{\Delta_{a}} (f) \|^2 =  o(\Var (V_{a,n} (\overline X)^{1/2}) = o( n^{1/2} \delta_n^{2D +s})$ to deduce  the central limit theorem for $X$ from that for $\overline X $. By application of the Taylor-Lagrange formula, one gets
\[
\Delta_{a,i}(f) = (Const)\times \delta_n^M \times  f^{(M)} (\xi),
\]
with $\xi \in [0, n^{1-\alpha}]$.  Then  $\|\mathbf{\Delta_{a}} (f) \| ^2 \leq n( K^\alpha_{M,n})^2  \delta_n^{2M}$ and 
 a sufficient condition is \eqref{eq:cond_K}.
\end{proof}

\paragraph{Acknowledgments}
This work has been partially supported by the French National
Research Agency (ANR) through project PEPITO
(no ANR-14-CE23-0011). The authors are grateful to Laurent Risser, for providing them the image registration data of Section \ref{subsubsection:large:data:set}.

\bibliographystyle{abbrv}
\bibliography{biblio_variation}
\end{document}